\theoremstyle{plain}
\newtheorem{thm}{Theorem} 
\newtheorem{defn}[thm]{Definition}
\newtheorem{lem}[thm]{Lemma}
\newtheorem{prop}[thm]{Proposition}
\newtheorem{rem}[thm]{Remark}
\providecommand{\ind}{\mathds{1}} 
\providecommand{\sm}{\setminus}
\providecommand{\N}{\mathbb{N}}
\providecommand{\R}{\mathbb{R}} 
\providecommand{\Z}{\mathbb{Z}}
\providecommand{\cK}{\mathcal{K}}
\providecommand{\cH}{\mathcal{H}}
\providecommand{\eps}{\varepsilon}
\providecommand{\ov}{\overline}
\providecommand{\wto}{\rightharpoonup}
\providecommand{\skp}[2]{\langle#1,#2\rangle}
\providecommand{\les}{\lesssim}
\DeclareMathOperator{\supp}{supp}
\DeclareMathOperator{\loc}{loc}
\DeclareMathOperator{\curl}{curl}
\DeclareMathOperator{\Eig}{Eig}
\DeclareMathOperator{\diver}{div}
\DeclareMathOperator{\codim}{codim}
\DeclareMathOperator{\dist}{dist}
\DeclareMathOperator{\spa}{span}
\renewcommand{\qed}{\hfill $\Box$}
\newcommand{\vecIII}[3]{
\ensuremath{
\begin{pmatrix}
#1 \\ #2 \\ #3 \\
\end{pmatrix}}}
\renewcommand{\r}[1]{\textcolor{red}{#1}}
\definecolor{Darkgblue}{rgb}{0.3,0.3,0.5}
\begin{document}

\allowdisplaybreaks

\title{Dual variational methods for Time-harmonic Nonlinear Maxwell's Equations}

\author{Rainer Mandel}
  
\subjclass[2020]{35J60, 35Q61}

\keywords{}
\date{\today}   

\begin{abstract}
  We prove the existence of infinitely many nontrivial solutions for time-harmonic nonlinear Maxwell's
  equations on bounded domains and on $\R^3$ using dual variational methods. In the dual setting we apply a new version
  of the Symmetric Mountain Pass Theorem that does not require the Palais-Smale condition.
\end{abstract}

\maketitle
\allowdisplaybreaks
\setlength{\parindent}{0cm}

\section{Introduction}

Nonlinear boundary value problems  of the form
\begin{align}\label{eq:NLCurlCurlD}
  \begin{aligned}
  \nabla\times \big(\mu(x)^{-1}\nabla\times E\big)  - \omega^2 \eps(x) E =  f(x,E) \quad\text{in
  }\Omega,\qquad E\times \nu = 0  \quad\text{on }\partial\Omega.
  \end{aligned}
\end{align}
originate from Maxwell's equations for time-harmonic electric field $E(x)e^{i\omega t}$  propagating in an
optically nonlinear medium. Here,  $\omega\in\R$ is the frequency of the wave, $\mu$ denotes the
permeability matrix and $\eps$ is the permittivity matrix of the propagation medium and $\nu$ denotes the
outer unit normal field of the domain $\Omega\subset\R^3$.
The nonlinearity $f(x,E)\in\R^3$ represents the superlinear part of the electric displacement field within the
propagation medium, see~\cite[pp. 825-826]{Med_GS}.   Several existence results for nontrivial solutions of
nonlinear Maxwell boundary value problems like~\eqref{eq:NLCurlCurlD} have been
proved \cite{BarMed,BarMed_NLMaxDomains,YangYeZhang,BarMedIII} under various assumptions on the data, notably
for bounded $C^2-$domains $\Omega\subset\R^3$ and the model nonlinearity $f(x,E)=|E|^{p-2}E$ for $2<p<6$ with
uniformly positive definite matrices $\eps,\mu$ such that $\eps\in W^{1,\infty}(\Omega;\R^{3\times 3})$, see
Theorem~2.2 and Proposition~3.1 in~\cite{BarMed}. In this paper we set up an alternative approach  by
implementing the dual variational method for
\begin{align}\label{eq:NLCurlCurlN}
  \begin{aligned}
  \nabla\times \big(\mu(x)^{-1}\nabla\times E\big) - \omega^2\eps(x) E =  f(x,E) \quad\text{in }\Omega,\qquad 
   \mu(x)^{-1} (\nabla\times E)\times \nu = 0 \quad\text{on }\partial\Omega.
\end{aligned} 
\end{align}  
By analogy with classical elliptic boundary value problems we will call 
\eqref{eq:NLCurlCurlD} a Dirichlet problem and \eqref{eq:NLCurlCurlN} a Neumann problem. We
refer to Remark~\ref{rem:BC} for a justification of this nomenclature. Given that the Dirichlet problem has
already been studied to some extent, we focus on the Neumann problem in the following.

\medskip

In our first main result we show that \eqref{eq:NLCurlCurlN} has a ground state and infinitely many bound
state solutions under the following assumptions on the data:
  \begin{itemize}
    \item[(A1)] $\Omega$ is a bounded $C^1$-domain satisfying an exterior ball condition.
    \item[(A2)]  $\eps,\mu\in L^\infty(\Omega;\R^{3\times 3})$ are uniformly
    positive definite with $\eps\in W^{1,3}(\Omega;\R^{3\times 3})$.     
    \item[(A3)] $f:\Omega\times\R^3\to\R^3$ is measurable with $f(x,E)=f_0(x,|E|)|E|^{-1}E$ 
    where, for almost all $x\in\Omega$,
    \begin{align*}
       s&\mapsto f_0(x,s)  \text{ is positive, differentiable and increasing on } (0,\infty),\\
       s&\mapsto s^{-1} f_0(x,s) \text{ is increasing on }(0,\infty)
    \end{align*}
    and there are $c_1,c_2>0$  and $2<p<6$ such that
    \begin{align*}
    \frac{1}{2}f_0(x,s)s - \int_0^s f_0(x,t)\,dt
    \geq c_1 s^p  
    \geq c_2 f_0(x,s) s 
    \quad\text{for all }s\geq 0.
  \end{align*} 
  \end{itemize}
  As we explain further below,
  the $W^{1,3}$-regularity for $\eps$ is needed to ensure Sobolev-type embeddings of the function spaces we
  are working in. To find solutions
  of~\eqref{eq:NLCurlCurlN} under the given assumptions, it is reasonable to perform a  Helmholtz
  decomposition where a given vector field $E$ is splitted according to $E=E_1+E_2$ where $\eps E_1$ is
  divergence-free and $E_2$ is curl-free in a suitable sense. To make this rigourous we introduce
  the Hilbert space $\mathcal H:=H(\curl;\Omega)$ as the completion of $C^\infty(\Omega;\R^3)$ with respect to
  the inner product
  \begin{equation}\label{eq:innerproduct}
    \skp{E}{F} := \int_\Omega\mu(x)^{-1}(\nabla\times E)\cdot (\nabla\times F) + \eps(x)E\cdot F\,dx.
  \end{equation}
  The appropriate function space for \eqref{eq:NLCurlCurlN}
  turns out to be $\mathcal V\oplus \mathcal W$ where
  \begin{align*}
      \mathcal V := \Big\{ E_1\in \mathcal H: \int_\Omega
      \eps(x)E_1\cdot\nabla \Phi\,dx = 0 \text{ for all } \Phi\in C^1(\ov\Omega)\Big\},  \qquad
      \mathcal W:= \big\{ \nabla u : u\in W^{1,p}(\Omega)\big\}. 
  \end{align*}
  Note that $\mathcal V,\mathcal W$ are formally orthogonal to each other and that the curl operator vanishes
  identically on $\mathcal W$.
  The Sobolev-type embeddings of $\mathcal V$ that we shall prove later   
  ensure  that the associated Euler functional 
  \begin{equation}\label{eq:defI}
    I(E) := \frac{1}{2} \int_{\Omega} \mu(x)^{-1}(\nabla\times E_1)\cdot (\nabla\times E_1)\,dx 
    -\frac{\omega^2}{2}\int_\Omega \eps(x)E\cdot E\,dx
    -  \int_{\Omega} F(x,E)\,dx
  \end{equation} 
  is continuously differentiable where $E=E_1+E_2$ with $E_1\in\mathcal V,E_2\in \mathcal W$. Here,
  $F(x,\cdot)$ denotes the primitive of $f(x,\cdot)$ with $F(x,0)=0$. 
  A weak solution  $E\in\mathcal V\oplus \mathcal W$ of \eqref{eq:NLCurlCurlN} is then defined as a solution
  of the Euler-Lagrange equation $I'(E)=0$. A nontrivial weak solution having least energy among all
  nontrivial weak solutions is called a ground state. Our first result reads as follows.
  
  \begin{thm} \label{thm:N} 
    Assume (A1),(A2),(A3) and $\omega^2\geq 0$. Then~\eqref{eq:NLCurlCurlN} has a ground state and infinitely
    many bound states in $\mathcal V\oplus \mathcal W$.
  \end{thm}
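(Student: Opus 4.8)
The plan is to dualize the problem and apply the announced symmetric mountain pass theorem to the \emph{dual} functional, then transfer the critical points back. By (A3), for a.e.\ $x$ the map $s\mapsto f_0(x,s)$ is continuous and strictly increasing with $f_0(x,0^+)=0$ and $f_0(x,s)\to\infty$ (both forced by the two-sided bound, since $\int_0^s f_0(x,t)\,dt\le s f_0(x,s)$ would otherwise contradict $\frac12 f_0(x,s)s-\int_0^s f_0(x,t)\,dt\ge c_1 s^p$), hence $f(x,\cdot)$ is a homeomorphism of $\R^3$ and equals $\nabla_E F(x,\cdot)$ with $F(x,\cdot)$ strictly convex. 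Its inverse is $\nabla_v F^*(x,\cdot)$ with $F^*$ the Fenchel conjugate; the bound $c_1 s^p\ge c_2 f_0(x,s)s$ forces $F$ to grow at least like $|E|^p$, so $F^*$ is subquadratic with $p'$-growth ($p'=p/(p-1)\in(1,2)$), and $\frac12 f_0(x,s)s-\int_0^s f_0(x,t)\,dt\ge c_1 s^p$ transforms into the dual Ambrosetti--Rabinowitz estimate $\frac12 v\cdot\nabla_v F^*(x,v)-F^*(x,v)\le -c\,|v|^{p'}$. Substituting $v:=f(x,E)$ turns \eqref{eq:NLCurlCurlN} into $\mathcal{L}\big(\nabla_v F^*(x,v)\big)=v$ tested against $\mathcal V\oplus\mathcal W$, where $\mathcal{L}=\nabla\times(\mu^{-1}\nabla\times\cdot)-\omega^2\eps$.

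Next I would invert the quadratic part along $\mathcal V\oplus\mathcal W$. Writing $B(E,F)=\int_\Omega\mu^{-1}(\nabla\times E_1)\cdot(\nabla\times F_1)-\omega^2\int_\Omega\eps E\cdot F$ for $E=E_1+E_2$, $F=F_1+F_2$ in $\mathcal V\oplus\mathcal W$, the Sobolev-type embeddings of $\mathcal V$ established later in the paper (in particular the Poincaré-type inequality on $\mathcal V$, $\mathcal V\hookrightarrow L^6$ and $\mathcal V\hookrightarrow\hookrightarrow L^q$ for $q<6$, which is where $\eps\in W^{1,3}$ and the $C^1$/exterior-ball regularity of $\Omega$ enter, here and in the scalar elliptic regularity for the curl-free block) show that $B$ restricted to $\mathcal V$ is the difference of the coercive form $\|\nabla\times\cdot\|^2_{\mu^{-1}}$ and a compact lower-order form, while on $\mathcal W$ it reduces to $-\omega^2\|\cdot\|_\eps^2$. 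Hence $B$ admits a bounded solution operator $\Lambda$ on the pertinent dual space $X^*\subset L^{p'}(\Omega;\R^3)$ (with a finite-dimensional modification at resonant $\omega^2$, absorbed afterwards by a saddle-point reduction so that all $\omega^2\ge0$ are covered, the case $\omega^2=0$ being treated on the constraint $\operatorname{div}v=0$). The dual functional is
\begin{equation*}
  J(v)=\int_\Omega F^*(x,v)\,dx-\frac12\langle v,\Lambda v\rangle,
\end{equation*}
which is $C^1$ on $X^*$ (again by the $L^{p'}$-embeddings, which place $\Lambda v$ in the dual of $X^*$), even, and satisfies $J'(v)=0$ if and only if $E:=\nabla_v F^*(x,v)$ solves $I'(E)=0$, with $J(v)=I(E)$; thus critical points and critical values of $J$ and $I$ correspond.

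Then I would verify the geometric hypotheses of the new symmetric mountain pass theorem for $J$. Since $F^*$ is subquadratic, $J(v)\ge c\|v\|_{L^{p'}}^{p'}-C\|v\|^2$, so $J$ stays above a positive level on small spheres, while along any ray on which the (strongly indefinite) form $\langle\cdot,\Lambda\cdot\rangle$ is positive and nondegenerate one has $J\to-\infty$; the symmetry $v\mapsto-v$ then produces the full family of minimax values $c_k\to\infty$. The quadratic part of $J$ is strongly indefinite — it has an infinite-dimensional negative subspace coming from $\mathcal W$ — and the Palais--Smale (or Cerami) condition fails, which is exactly why a new version of the theorem is needed: it only demands boundedness and convergence of a distinguished class of approximating sequences, which one checks for $J$ by using the dual Ambrosetti--Rabinowitz estimate to bound $\|v_n\|_{L^{p'}}$ (from $2J(v_n)-\langle J'(v_n),v_n\rangle\ge 2c\|v_n\|_{L^{p'}}^{p'}$) and then the compact embeddings of $\mathcal V$ to extract convergent subsequences. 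This yields infinitely many critical points $v_k$ at the levels $c_k$, hence infinitely many bound states $E_k=\nabla_v F^*(x,v_k)\in\mathcal V\oplus\mathcal W$ of \eqref{eq:NLCurlCurlN}. For the ground state I would minimize $J$ over its generalized Nehari set $\mathcal N=\{v\ne0:\langle J'(v),v\rangle=0,\ \langle J'(v),w\rangle=0\ \forall w\in X^-\}$; the monotonicity conditions in (A3) ($s\mapsto f_0(x,s)$ and $s\mapsto s^{-1}f_0(x,s)$ increasing) dualize to exactly the conditions that make $\mathcal N$ a $C^1$-manifold with unimodal fibering maps, so $\inf_{\mathcal N}J$ is attained (again using the compactness of $\mathcal V\hookrightarrow L^p$, $p<6$) and equals the least nontrivial critical level.

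The main obstacle I expect is the replacement of the Palais--Smale condition: because $J$ is strongly indefinite and $\int F^*$ is only subquadratic, hence non-coercive, the abstract symmetric minimax scheme must be engineered so that its levels remain critical values, and this requires orchestrating precisely the boundedness coming from the two-sided estimate in (A3) with the compact embeddings of $\mathcal V$ — together with the bookkeeping for resonant $\omega^2$ and for the degenerate case $\omega^2=0$, where the quadratic form vanishes on $\mathcal W$ and the dual problem must be posed under the divergence constraint.
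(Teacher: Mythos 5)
Your overall route — invert $f$ via a Fenchel-type conjugate, write a dual functional built from $\Psi=F^*$ and the solution operator of the linear curl-curl problem, and deduce the ground state and multiplicity on the dual side before transferring back — is indeed the paper's strategy (Proposition~\ref{prop:psi}, Lemma~\ref{lem:equivalenceI}, Theorem~\ref{thm:dual}). But your account of the \emph{geometry} of the dual functional and of the role of the PS-attracting Mountain Pass Theorem contains a genuine error that would derail the proof as you describe it.

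The dual functional in the paper, cf.~\eqref{eq:functionalJdualN}, is
$J(P)=\int_\Omega\Psi(x,\eps P)\,dx+\frac{1}{2\omega^2}\|P_2\|_\eps^2-\frac12\int_\Omega(\mathcal L-\omega^2)^{-1}P_1\cdot\eps P_1\,dx$,
and the block coming from $\mathcal W$ (i.e.\ $P_2\in Y^2$) appears with a \emph{positive} coefficient $1/(2\omega^2)$. There is no infinite-dimensional negative subspace coming from $\mathcal W$; the negative quadratic directions lie in $X^{p'}$, spanned by eigenfunctions with $\lambda_i>\omega^2$. Crucially, these do not make $J$ strongly indefinite: because $\Psi$ has $p'$-growth with $p'<2$, the superlinear term dominates the quadratic one near the origin, so $J$ is uniformly positive on a small sphere of the \emph{whole} space. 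Hence $(G_2)$ holds with $Z^+=Z$, $Z^-=\{0\}$, and the dual problem has ordinary symmetric mountain pass geometry. This is precisely the point of dualizing: the primal $I$ on $\mathcal V\oplus\mathcal W$ is strongly indefinite, the dual $J$ is not. Building your argument around a strongly indefinite $J$ with negative subspace from $\mathcal W$ would send you back to Nehari--Pankov reductions, which the paper explicitly sets out to avoid. Relatedly, your claim that the Palais--Smale condition fails for this $J$ is incorrect on bounded domains: Proposition~\ref{prop:PScondition} shows PS holds, using the compactness of $(\mathcal L-\omega^2)^{-1}:X^{p'}\to X^p$. The PS-attracting refinement of the SMPT is only genuinely needed for the $\R^3$ problem (Theorem~\ref{thm:R3}), where the resolvent is replaced by a non-compact limiting-absorption operator; for Theorem~\ref{thm:N} the classical SMPT would already suffice, and the paper obtains the ground state from Willem's Mountain Pass Theorem together with his ground-state characterization, not from a Nehari set. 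Finally, the resonant case $\omega^2\in\sigma(\mathcal L)$ is handled not by a saddle-point reduction but simply by restricting the dual space to $X^{p'}_{\omega^2}$ (Proposition~\ref{prop:Xlambda}), and the static case $\omega^2=0$ by working on $X^{p'}$ alone, where $\mathcal L^{-1}$ is available since $0\notin\sigma(\mathcal L)$.
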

  
  Almost the same proof gives the corresponding result for the  Dirichlet
  problem~\eqref{eq:NLCurlCurlD} and we shall comment on the necessary modifications in 
  Appendix~\ref{sec:Dirichlet}. In particular, the proof of Theorem~\ref{thm:N} indicates an alternative
  method to prove \cite[Theorem~2.2]{BarMed} under slightly different assumptions on
  the data. It is noteworthy that our proof, which relies on the dual variational method, avoids saddle-point 
  reductions to the Nehari-Pankov manifold and the rather involved critical point theory for strongly
  indefinite functionals.
   
%
%
%
  
  
  \medskip
  
  We demonstrate that the dual variational approach is applicable on $\R^3$ as well. This problem is
  substantially different given that it resembles a nonlinear Helmholtz equation rather than an  
  elliptic boundary value problem. We have to make our assumptions on the
  permittivity $\eps$ and permeability $\mu$ more restrictive by requiring both to be constant and scalar: 
  $(\eps,\mu)\equiv (\eps_0,\mu)$ where $\eps_0\mu_0\in (0,\infty)$. This leads to the problem
  \begin{equation}\label{eq:NLCurlCurlR3}
    \nabla\times\nabla\times E - \omega^2\eps_0\mu_0 E = f(x,E)\qquad\text{in }\R^3.
  \end{equation}
  Our aim is to prove the existence of infinitely many $L^p$-solutions
  for this problem.  The main difference compared to the case of a bounded domain
   is that the curl-curl operator on $\R^3$ does not come with discrete point spectrum in $(0,\infty)$ but
  continuous spectrum just like the Helmholtz operator. In particular, a resolvent at the spectral parameter
  $\omega^2\eps_0\mu_0 >0$ does not exist, but some sort of right inverse can be constructed by means of the
  Limiting Absorption Principle.
  This linear operator $\mathcal R$, defined in~\eqref{eq:Romega2} below, enjoys
  boundedness and compactness properties as an operator from the divergence-free functions
  in $L^{p'}(\R^3;\R^3)$ to the divergence-free functions in $L^p(\R^3;\R^3)$ provided that $4<p<6$. 
  Evequoz and Weth~\cite{EveqWeth_Dual} showed how to exploit these properties 
  in the context of dual variational methods for nonlinear Helmholtz equations. In order to adapt
  this to the Maxwell setting on $\R^3$ we sharpen our assumptions on the nonlinearity.
   
   \begin{itemize}
     \item[(A4)] $f$ satisfies (A3) with $4<p<6$ where $f_0(\cdot,s)$ is $\Z^3$-periodic
    for all $s\in\R$ and there are $c,C>0$ with 
    $$
      cs^{p-2} \leq \partial_s f_0(x,s) \leq C s^{p-2} \qquad\text{for almost all }x\in\R^3 \text{ and all
      }s\in\R.
    $$
   \end{itemize}
   
   Our main result about~\eqref{eq:NLCurlCurlR3} reads as follows.
  
  \begin{thm}\label{thm:R3}
    Assume (A4) and $\omega^2\eps_0\mu_0\in (0,\infty)$.
    Then the equation~\eqref{eq:NLCurlCurlR3} admits a dual ground state and infinitely many
    geometrically distinct solutions in $L^{p}(\R^3;\R^3)$.
  \end{thm}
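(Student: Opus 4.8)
The plan is to recast \eqref{eq:NLCurlCurlR3} as a dual variational problem on a subspace of $L^{p'}(\R^3;\R^3)$ and then apply the same abstract machinery (the Palais–Smale-free Symmetric Mountain Pass Theorem) that drives the proof of Theorem~\ref{thm:N}, but with the Limiting Absorption Principle replacing the compact resolvent. Write $f(x,E)=g(x,|E|)^{-1}$-type nonlinearity concretely via $f(x,E)=f_0(x,|E|)|E|^{-1}E$; under (A4) the map $E\mapsto f(x,E)$ is a strictly monotone gradient field, so it has an inverse $f^{-1}(x,\cdot)$ whose primitive $F^*(x,\cdot)$ (the Legendre transform of $F(x,\cdot)$) is a $C^1$ convex function satisfying two-sided bounds $c'|v|^{p'}\le F^*(x,v)\le C'|v|^{p'}$ and an analogous Ambrosetti–Rabinowitz-type inequality, all inherited from (A4) by Legendre duality. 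First I would carry out a Helmholtz decomposition $L^{p'}=L^{p'}_{\mathrm{df}}\oplus L^{p'}_{\mathrm{cf}}$ into divergence-free and curl-free parts, observe that on the curl-free part the operator $\nabla\times\nabla\times-\omega^2\eps_0\mu_0$ reduces to $-\omega^2\eps_0\mu_0\,\id$, so that component is handled algebraically, and set up the dual functional
\begin{equation*}
  J(v) = \int_{\R^3} F^*(x,v)\,dx - \frac{1}{2}\int_{\R^3} \overline{v}\cdot \big(\mathcal R v\big)\,dx
\end{equation*}
on (the divergence-free part of) $L^{p'}(\R^3;\R^3)$, where $\mathcal R$ is the LAP right-inverse of the curl-curl operator announced after \eqref{eq:NLCurlCurlR3}, restricted to divergence-free fields; critical points $v$ of $J$ correspond to solutions $E=\mathcal R v$ of \eqref{eq:NLCurlCurlR3} via the relation $v=f(x,E)$.

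Next I would verify the geometric hypotheses of the Symmetric Mountain Pass Theorem for $J$: the even symmetry is immediate; the mountain-pass geometry near $0$ follows because $F^*$ is superquadratic of order $p'<2$ near the origin is the wrong direction—rather, since $p'<2$ one has $F^*(x,v)\gtrsim |v|^{p'}$ dominating the quadratic term $\langle v,\mathcal R v\rangle$ for small $v$, giving a local minimum-type barrier, while on any finite-dimensional subspace $J\to-\infty$ because the quadratic form $v\mapsto\int \overline v\cdot\mathcal R v$ is positive on a suitable cone (this is where one exploits that $\omega^2\eps_0\mu_0>0$ lies in the continuous spectrum and $\mathcal R$ carries a nontrivial positive part, as in Evequoz–Weth). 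Then I would establish the key compactness substitute: the Cerami or Palais–Smale sequences of $J$ need not converge, but the $\Z^3$-periodicity of $f_0$ lets one translate them back to a bounded region, and the boundedness-and-compactness of $\mathcal R\colon L^{p'}_{\mathrm{df}}\to L^p_{\mathrm{df}}$ for $4<p<6$ (compactness after localization, again following \cite{EveqWeth_Dual}) yields a nonzero weak limit up to translation; this gives a nontrivial critical point and, by the standard concentration-compactness/Lions-type argument together with the periodicity-induced $\Z^3$-action, the dual ground state and then infinitely many geometrically distinct critical points (orbits distinct modulo the $\Z^3$-translation group).

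The main obstacle I anticipate is precisely this compactness step: unlike the bounded-domain case of Theorem~\ref{thm:N}, where the embedding of $\mathcal V$ is compact and Palais–Smale sequences converge outright, here one only has a Limiting-Absorption right inverse with no spectral gap, so $J$ genuinely fails the Palais–Smale condition and one must run a Lions-type vanishing/non-vanishing dichotomy adapted to the curl-curl structure, controlling the nonlocal operator $\mathcal R$ under translations and ruling out vanishing via the lower bound $F^*(x,v)\gtrsim|v|^{p'}$ and the nontriviality of the mountain-pass level. A secondary technical point is checking that the Legendre transform $F^*$ is genuinely $C^1$ with the claimed two-sided $p'$-growth and monotonicity—this requires the strict monotonicity and the two-sided bounds on $\partial_s f_0$ in (A4), and it is the reason (A4) is stated with those sharp estimates rather than the weaker (A3). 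Once these are in place, the abstract Symmetric Mountain Pass Theorem without Palais–Smale (the new version advertised in the abstract) delivers the unbounded sequence of critical values and hence the infinitely many solutions, with the $\Z^3$-periodicity ensuring they are geometrically distinct.
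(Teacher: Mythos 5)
Your overall strategy --- Helmholtz decomposition, Legendre-dualizing the nonlinearity, the LAP operator $\mathcal R$, and periodicity plus nonvanishing feeding into the PS-free symmetric mountain pass theorem --- is the one the paper follows. But there is a genuine gap in your setup of the dual functional: you place $J$ only on the divergence-free part of $L^{p'}(\R^3;\R^3)$ and claim the curl-free component is ``handled algebraically''. It is not. The dual variable is $P=f(x,E_1+E_2)$, and since $f(x,\cdot)$ acts pointwise, $P$ generically has a nontrivial gradient component $P^2$; the convex conjugate $\Psi(x,P)$ couples $P^1$ and $P^2$ pointwise, while the relation $E_2=-\lambda^{-1}P^2$ involves the nonlocal Helmholtz projection, so you cannot eliminate $P^2$ and still have a variational problem in $P^1$ alone. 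The paper keeps both components: $J$ lives on $Z=X^{p'}\oplus(Y^2\cap Y^{p'})$ and carries the extra positive quadratic term $\frac{1}{2\lambda}\int_{\R^3}|P^2|^2\,dx$, whose Euler--Lagrange equation reproduces exactly the algebraic relation you want.

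This omission hides a second issue that the paper flags as its new ingredient relative to the nonlinear Helmholtz equation. In the local compactness step, testing $J'(P_n)-J'(P)$ against $(P_n-P)\ind_B$ on a ball $B$ leaves the cross term $\int_B(P_n^2-P^2)\cdot(P_n^1-P^1)\,dx$, which does \emph{not} vanish by orthogonality, since the Helmholtz decomposition is orthogonal on $\R^3$ but not on $B$; the paper kills it with the Div--Curl Lemma, using $\nabla\cdot(P_n^1-P^1)=0$ and $\nabla\times(P_n^2-P^2)=0$. Two smaller corrections: $(G_3)$ does not hold on an arbitrary finite-dimensional subspace (on gradient fields $J\to+\infty$); one must take $Z_m$ spanned by divergence-free fields whose Fourier support lies in $\{|\xi|^2>\omega^2\eps_0\mu_0\}$, so that $\int P\cdot\mathcal R(P)\,dx\gtrsim\|P\|^2$ on $Z_m$. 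Finally, the step from $\int(\psi(x,P_n)-\psi(x,Q_n))\cdot(P_n-Q_n)\,dx\to0$ to $\|P_n-Q_n\|_{p'}\to0$ in the PS-attracting verification requires the quantitative monotonicity estimate of Proposition~\ref{prop:Evequozargument}, which is precisely where the two-sided bounds on $\partial_s f_0$ in (A4) enter; you gesture at this but it must be made explicit.
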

    
  Here, the notion of a dual ground state is the same as in \cite{EveqWeth_Dual}, i.e., $P:=f(x,E)$ is a ground state
  for some associated dual functional $J$ involving the operator $\mathcal R$, see Section~\ref{sec:TheoremR3}
  and in particular \eqref{eq:defJ_R3} further below. 
  As a new feature compared to the Nonlinear Helmholtz Equation \cite{EVEQ} our analysis makes use of the so-called
  Div-Curl-Lemma. 
  In view of the result on bounded domains, one may aim for an extension of Theorem~\ref{thm:R3} to more
  general permittivities $\eps$ and permeabilites $\mu$. Here the main challenge is the construction of a
  bounded linear operator  $\mathcal R$ with analogous properties as well as a corresponding Helmholtz
  Decomposition Theorem.
  
  \medskip
  
  We emphasize that, up to our knowledge, this is the
  first variational existence result for \eqref{eq:NLCurlCurlR3} in the case $\omega^2\eps_0\mu_0>0$ and
  it is much stronger than Theorem~3(ii) in~\cite{Man_Uncountably} which has been obtained by a fixed point argument.  Note that   much
  research has been devoted to the complementary case $\omega^2\eps_0\mu_0\leq 0$ within the framework of
  cylindrically symmetric and thus divergence-free solutions
  \cite{AzzBenDApFor_Static,DApSic_Magnetostatic,BaDoPlRe_GroundStates,HirRei_Cylindrical,Bieganowski} where
  the variational analysis is somewhat parallel to the well-studied case of stationary nonlinear Schr\"odinger
  equations on $\R^3$. 
  This follows from the identity $\nabla\times\nabla\times E = -\Delta E$ for divergence-free vector fields
  $E$.  

\medskip

  Our strategy is to prove the above-mentioned results using dual variational methods based on a
  partially new variant of the Symmetric Mountain Pass Theorem (SMPT). Note that 
  the classical SMPT \cite[Corollary~2.9]{AmbRab} is not sufficient to prove Theorem~\ref{thm:R3} given that
  the Palais-Smale condition does not hold. 
  For this reason we  first provide a critical point theorem (Theorem~\ref{thm:CPTheorem}) that allows to
  prove both our main theorems simultaneously. In this result, the $C^1$-functional is only required to
  be ``PS-attracting'' in the sense of Definition~\ref{def:PSattracting} below. Having proved this
  result in Section~\ref{sec:SMPT}, we apply it in our proofs of Theorem~\ref{thm:N} in   
  Section~\ref{sec:TheoremN} and Theorem~\ref{thm:R3} in Section~\ref{sec:TheoremR3}.
  Section~3 provides the Linear Theory needed for the analysis in Section~\ref{sec:TheoremN}.
  
%
%
%

\section{A Symmetric Mountain Pass Theorem without PS-condition} \label{sec:SMPT}

  In this section we prove a variant of the Symmetric Mountain Pass Theorem with the distinguishing
  feature that it does not require the Palais-Smale condition. The idea for this abstract result and its proof
  is due to Szulkin-Weth \cite{SzuWet} and an earlier paper by Bartsch-Ding
  \cite{BarDing_NLSperiodic} where the existence of infinitely many geometrically distinct solutions has been proved for some periodic
  nonlinear Schr\"odinger equation of the form 
  $$
    -\Delta u + V(x)u = f(x,u) \qquad\text{in }\R^n
  $$
  with an odd nonlinearity $f(x,\cdot)$. Several refinements and applications can be found in the
  literature. Here we want to highlight the contributions by Squassina and Szulkin~\cite{SquSzu} and 
  Evequoz~\cite{EVEQ} given their relevance for our paper. Our first goal is provide an abstract critical
  point theorem for functionals $J:Z\to\R$ on a general Banach space with some sort of Mountain
  Pass Geometry. In general, the PS-condition for $J$ may fail, but some useful but weaker compactness property, a 
  ``PS-attracting'' property,  is required to hold.
  This property is essentially extracted from the paper by Szulkin and Weth \cite{SzuWet} and functionals
  satisfying the PS-condition are easily seen to be PS-attracting, see Remark~\ref{rem:Abstract}(a). As a
  consequence, our variant of the SMPT   generalizes the original one and yields the existence
  of infinitely many solutions in various applications. In the presence of discrete translational equivariance
  it provides the existence of infinitely many geometrically distinct solutions, see
  Remark~\ref{rem:Abstract}(b).
  
  \medskip
    
  For a given functional $J\in C^1(Z)$ we define the sets of critical points  
  $$
    \mathcal{K}:=\left\{u \in Z: J^{\prime}(u)=0\right\},\qquad
    \mathcal{K}_d:=\left\{u \in \cK: J(u)=d\right\}.
  $$
  We first introduce the notion of a $\cK$-decomposition where $\cK$ is subdivided into suitable bounded and
  symmetric pieces that are discrete. Here, a set $A\subset Z$ is called symmetric
  provided that it is even, i.e., we have $x\in A$ if and only if $-x\in A$.

\begin{defn} \label{defn:Kdecomposition}
  Let $J\in C^1(Z)$ be even. We say that $J$ admits a $\mathcal K$-decomposition $\cK=\bigcup_{i\in I} \cK^i$ 
  for some set $I$ if the following holds for all $i,j\in I$:
  \begin{itemize}
    \item[(i)] $\cK^i$ is bounded, symmetric and non-empty, 
    \item[(ii)] $\inf \{ \|u-v\|: u,v\in \cK^i\cup \cK^j, u\neq v\}>0$,
    \item[(iii)] $\{J(u):u\in \cK^i\}$ is a finite set.
  \end{itemize}
  Such a $\cK$-decomposition is called finite if $\#I<\infty$ and infinite if $\#I=\infty$.   
\end{defn}

We stress that (ii) needs to be checked for $i=j$ as well.
  To formulate our result we introduce the notion of a $PS$-attracting functional.

\begin{defn}\label{def:PSattracting}
  We say that $J$ is $PS$-attracting if for any given Palais-Smale sequences $(v_n)_n,(w_n)_n$ of $J$
  we either have $\|v_n -w_n \| \to 0$ as $n \to \infty$ or 
  $$
    \limsup_{n \to \infty}\|v_n-w_n \| \geq \kappa
    \quad\text{where }\kappa:= \inf\big\{\|u-v\|: u,v\in\cK, u\neq v\big\}.  
  $$
\end{defn}

We stress that there is no need to prove $\kappa>0$. The importance of this observation is discussed in
Remark~\ref{rem:Abstract}(c). In fact, even in situations where the Palais-Smale condition holds, $\kappa$ can
be $0$. For instance, consider any functional $J\in C^1(\R)$ satisfying  $J(x)=1$ for $1\leq |x|\leq 2$ as well as $|J'(x)|\geq c>0$ outside some compact
set. Obviously, $J$ satisfies the PS-condition, but the set of critical points is not discrete and hence
$\kappa$ must be zero. The definition of $\kappa$ involves the set $\cK$ that is, of course, unknown a
priori. Nevertheless, we shall see below that this compactness property can be verified in applications. The
assumptions on   the energy functional are the following:
\begin{itemize}
  \item[($G_1$)] $Z$ is a Banach space $Z$ with $Z=Z^+\oplus Z^-$ and 
  $\dim(Z^-)<\infty, \dim(Z^+)=\infty$.
  \item[($G_2$)] $J\in C^1(Z)$ is even with $J(0)=0$ and, for some $\rho>0$, 
  $$
    \inf_{S_\rho^+} J>0
    \qquad\text{where}\qquad S_\rho^+ = \{v\in Z^+: \|v\|=\rho\}.
  $$
  \item[($G_3$)] For any given $m\in\N$ there is a finite-dimensional subspace $Z_m\subset Z^+$ such that
  $J(u)\to -\infty$ uniformly for $u\in
  Z^-\oplus Z_m$ as $\|u\|\to\infty$ and $\dim(Z_m)\nearrow +\infty$ as $m\to\infty$.
  \item[($G_4$)] $J$ is PS-attracting.
\end{itemize}
Note that $Z^-=\{0\}$ is a valid choice in $(G_1)$ and the uniformity in $(G_3)$ need only  hold
for a fixed $m\in\N$. Our variant of the Symmetric Mountain Pass Theorem without   Palais-Smale condition   reads
as follows.

\begin{thm}\label{thm:CPTheorem}
  Assume $(G_1),(G_2),(G_3),(G_4)$. Then every $\cK$-decomposition of $J$ is infinite. In particular, $J$ has
  infinitely many pairs of critical points.
\end{thm}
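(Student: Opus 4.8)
The plan is to combine the Krasnoselskii genus with a Benci-type pseudo-index into a symmetric minimax scheme, as in Bartsch--Ding \cite{BarDing_NLSperiodic}, Szulkin--Weth \cite{SzuWet}, Squassina--Szulkin \cite{SquSzu} and Evequoz \cite{EVEQ}, and to replace the Palais--Smale condition in the underlying deformation lemma by the PS-attracting property $(G_4)$. Concretely, fix a suitable class $\Gamma$ of admissible odd homeomorphisms of $Z$ (containing the identity and the truncated time-one maps of the pseudo-gradient flows used below), and for a closed symmetric set $A\subseteq Z$ put $i^*(A):=\min_{h\in\Gamma}\gamma(h(A)\cap S_\rho^+)$ with $\gamma$ the genus; then set
$$
  c_k := \inf\bigl\{\sup_A J : A\subseteq Z\ \text{closed and symmetric},\ i^*(A)\ge k\bigr\},\qquad k\in\N.
$$
Since $\id\in\Gamma$, every admissible $A$ with $i^*(A)\ge 1$ meets $S_\rho^+$, so $c_k\ge\inf_{S_\rho^+}J>0$ by $(G_2)$; and using the subspaces $Z^-\oplus Z_m$ from $(G_3)$, on large spheres of which $J$ is arbitrarily negative, one produces admissible comparison sets showing $c_k<\infty$ for every $k$. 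Thus $(c_k)$ is a well-defined non-decreasing sequence of positive reals, and $i^*$ is monotone under $\Gamma$, i.e. $i^*(h(A))\ge i^*(A)$ for $h\in\Gamma$, which is what makes the $c_k$ usable in a deformation argument.

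Now argue by contradiction: assume $J$ admits a finite $\cK$-decomposition $\cK=\bigcup_{i=1}^N\cK^i$. By (ii), applied to all pairs $i,j$ (including $i=j$), the union $\cK$ is uniformly discrete, hence closed, and $\kappa=\inf\{\|u-v\|:u,v\in\cK,\ u\ne v\}\in(0,\infty]$. By (iii), $J(\cK)=\bigcup_i J(\cK^i)$ is a finite subset of $\R$. Moreover, for each $d>0$ the set $\cK_d$ is a closed, symmetric, uniformly discrete subset of $Z\sm\{0\}$; any such set has genus $\le 1$ (choose a section of the free antipodal action; it and its image are relatively open because $\cK_d$ is discrete), so by subadditivity of the genus $\gamma(\cK_d)\le N$ for all $d>0$.

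The technical core is a symmetric deformation lemma that avoids the Palais--Smale condition: for every $c>0$ and every open symmetric neighbourhood $\mathcal O$ of $\cK_c$ with $\gamma(\overline{\mathcal O})\le\gamma(\cK_c)$ there are $\eps>0$ and $h\in\Gamma$ with $h(J^{c+\eps}\sm\mathcal O)\subseteq J^{c-\eps}$ (and $h(J^{c+\eps})\subseteq J^{c-\eps}$ when $\cK_c=\emptyset$), where $J^a:=\{J\le a\}$. As usual this reduces to the bound $\inf\{\|J'(u)\|:u\in J^{-1}[c-2\eps,c+2\eps]\sm\mathcal O\}>0$ for $\eps$ small, after which a pseudo-gradient flow, truncated so as to lie in $\Gamma$, does the job. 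If that infimum were zero there would be a Palais--Smale sequence $(u_n)$ at some level in $[c-2\eps,c+2\eps]$ with $\dist(u_n,\cK_c)\ge\delta>0$; here one invokes $(G_4)$, together with the facts derived from the assumed finite decomposition — namely that $J(\cK)$ is finite (so the only critical level in $[c-2\eps,c+2\eps]$ is $c$ itself, for $\eps$ small) and that $\kappa>0$ — to force $(u_n)$ back towards $\cK_c$, a contradiction. I expect this to be the main obstacle: it is precisely the step where the PS-attracting property substitutes for Palais--Smale, and it will likely require comparing $(u_n)$ with carefully chosen auxiliary Palais--Smale sequences, in the spirit of \cite{SzuWet,BarDing_NLSperiodic,SquSzu,EVEQ}.

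Granting this deformation lemma, the conclusion follows by the standard fountain argument. If $c_k\notin J(\cK)$ for some $k$, then $\cK_{c_k}=\emptyset$; picking $A$ with $i^*(A)\ge k$ and $\sup_A J<c_k+\eps$ gives $A\subseteq J^{c_k+\eps}$, hence $h(A)\subseteq J^{c_k-\eps}$ while $i^*(h(A))\ge i^*(A)\ge k$, so $c_k\le c_k-\eps$ — impossible. Therefore $c_k\in J(\cK)$ for every $k$, and since $J(\cK)$ is finite and $(c_k)$ non-decreasing, $c_k=c^*$ for all large $k$. But whenever $c^*=c_k=c_{k+j}$ one applies the deformation lemma with a neighbourhood $\mathcal O$ of $\cK_{c^*}$ satisfying $\gamma(\overline{\mathcal O})=\gamma(\cK_{c^*})$ and uses subadditivity of the pseudo-index, $i^*(A)\le i^*(\overline{A\sm\mathcal O})+\gamma(\overline{\mathcal O})$, to obtain $c_{k+j-\gamma(\cK_{c^*})}\le c^*-\eps$; since $c_{k+j-\gamma(\cK_{c^*})}\ge c^*$ once $\gamma(\cK_{c^*})\le j$, this forces $\gamma(\cK_{c^*})\ge j+1$ for every $j\ge 1$, i.e. $\gamma(\cK_{c^*})=\infty$, contradicting $\gamma(\cK_{c^*})\le N$. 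Hence no finite $\cK$-decomposition exists; in particular $\cK$ is infinite, and being symmetric it consists of infinitely many pairs of critical points of $J$.
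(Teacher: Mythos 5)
Your overall blueprint — pseudo-index minimax levels, a symmetric deformation lemma, and a contradiction drawn from a hypothetical finite $\cK$-decomposition — matches the paper's. The closing pseudo-index manipulation differs only cosmetically (you derive $\gamma(\cK_{c^*})=\infty$ from level repetition; the paper derives $d_{k+1}>d_k$ strictly and then appeals to Definition~\ref{defn:Kdecomposition}(iii)), and the observations that the minimax levels are positive, finite, nondecreasing, and that $\gamma(\cK_d)\leq 1$ for $d\neq 0$, are all on point.

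The genuine gap is the route you propose to the deformation lemma, namely the assertion that it ``as usual reduces to the bound $\inf\{\|J'(u)\|: u\in J^{-1}[c-2\eps,c+2\eps]\setminus\mathcal O\}>0$.'' That reduction is exactly what fails when Palais--Smale is dropped, and $(G_4)$ does not rescue it. PS-attracting constrains \emph{pairs} of PS-sequences, and the only outcome allowed besides $\|v_n-w_n\|\to 0$ is $\limsup\|v_n-w_n\|\geq\kappa$. So a PS-sequence $(u_n)$ at a level near $c$ with $\dist(u_n,\cK_c)\to\infty$ (think of translated copies of a critical point in the $\R^3$ application of Theorem~\ref{thm:R3}, precisely why PS fails there) cannot be compared against any second PS-sequence $(w_n)$ at distance $<\kappa$, so no contradiction arises and the global infimum you wrote down may indeed be zero. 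Your own caveat (``I expect this to be the main obstacle'') is well placed, but the reduction you sketch does not survive it.

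The paper's Lemma~\ref{lem:entrancetime} sidesteps this entirely and never asserts a uniform gradient lower bound on the energy slab. Instead it argues trajectory by trajectory for the pseudo-gradient flow started in $J^{d+\eps}\setminus U_\delta(\cK_d)$: if a trajectory never drops below $d-\eps$, one first proves the convergence \eqref{eq:deformationII} $\eta(t,w)\to w^*\in\cK_d$ as $t\to T^+(w)$ — via a Cauchy estimate when $T^+(w)<\infty$, and when $T^+(w)=\infty$ via a contradiction obtained by extracting \emph{from the flow itself} two auxiliary PS-sequences $(v_n),(w_n)$ with $\frac{\eps}{3}\leq\|v_n-w_n\|\leq 2\eps<\kappa$, which PS-attracting forbids. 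Only after that does a gradient lower bound appear, and only on the bounded annulus $U_\delta(\{w^*\})\setminus U_{\delta/2}(\{w^*\})$ of diameter $<\kappa$, where Proposition~\ref{prop:preliminaries}(a) supplies $\tau>0$ by comparison with the constant PS-sequence $(w^*)$. That localized bound forces an energy drop larger than $2\eps$ for small $\eps$, contradicting \eqref{eq:entrancetimeI}. This flow-convergence step is the essential substitute for the Palais--Smale condition, and it is the missing idea in your proposal.
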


\begin{rem} \label{rem:Abstract} ~
\begin{itemize}
  \item[(a)] If $J\in C^1(Z)$ satisfies the  PS-condition, then $J$ is
  $PS$-attracting. Indeed, for two PS-sequences $(v_n),(w_n)$ with $\limsup_{n\to\infty} \|v_n-w_n\|\geq
  \mu>0$ we may subsequently pass to  subsequences such that $(v_n),(w_n)$ strongly converge to $v,w$, respectively. 
  The continuity of $J'$ implies that $v,w$ are critical points of $J$ with $\|v-w\|\geq \mu>0$. So $v\neq w$
  and  
  $$
    \limsup_{n\to\infty} \|v_n-w_n\| \geq \|v-w\| \geq \kappa.
  $$
  So $J$ is $PS$-attracting. This implication shows that 
  Theorem~\ref{thm:CPTheorem} contains the original Symmetric Mountain
  Pass Theorem  \cite[Corollary~2.9]{AmbRab} and even the more general
  version \cite[Theorem~6.5]{Struwe} as special cases.  
  \item[(b)] In the context $\Z^d$-equivariant stationary nonlinear Schr\"odinger equations this theorem
  applies to $\cK$-decompositions given by sets of the form 
  $$
    \cK^v = \{v(\cdot+n):n\in\Z^d\}\cup \{-v(\cdot+n):n\in\Z^d\}.
  $$ 
  Indeed, in the papers \cite{SquSzu,EVEQ} the authors implicitly check the properties required in
  Definition~\ref{defn:Kdecomposition}, notably that different orbits have positive distance to each other,
  see \cite[Lemma~3.1]{EVEQ}.    
  The PS-attracting property of the energy functional is verified using the Nonvanishing Property 
  \cite[Theorem~2.3]{EveqWeth_Dual} of $\mathcal R$ as well as the Rellich-Kondrachov theorem. Our
  reasoning in the proof of Theorem~\ref{thm:R3} follows the same lines.
  \item[(c)] Theorem~4.2 in \cite{BarDing_NLSperiodic} is more general regarding the regularity assumptions on
  the functional and it even allows for infinite-dimensional $Z^-$ up to modifications of the topology on
  $Z^-$.
  On the other hand, in assumption ($\Phi_5$) the authors assume $\kappa>0$.   
  This is responsible for the fact that this Theorem cannot be applied directly in typical situations, but must be
  used in some indirect reasoning just as in the proof of Theorem~1.2 in~\cite{BarDing_NLSperiodic}.
  In particular, \cite[Theorem~4.2]{BarDing_NLSperiodic} does not generalize the SMPT. 
  Theorem~\ref{thm:CPTheorem} removes this inconvenience. 
\end{itemize}
\end{rem}

 The strategy to prove Theorem~\ref{thm:CPTheorem} is as follows: We assume for contradiction that the
 claim is false, i.e., 
\begin{equation}\label{eq:Assumption}
  \text{There is a finite }\cK-\text{decomposition for }J.
\end{equation}   
With this assumption and Definition~\ref{defn:Kdecomposition}(ii) we know that $\kappa$ from
Definition~\ref{def:PSattracting} is positive. This enables us to perform a deformation argument
involving the minmax values 
$$
  d_k := \inf_{A\in\Sigma, \iota^*(A)\geq k} \sup_{u\in A} J(u).
$$
where $\Sigma := \{A\subset Z: A \text{ is symmetric and compact}\}$ and $\iota^*$ is defined
as in~\cite{SquSzu,EVEQ}, namely
\begin{align*}
  \iota^*(A) &:= \min\big\{\gamma(h(A)\cap S_{\rho}^+):\;  h\in \cH\big\} \qquad \text{where}\\
  \cH&:= \big\{h:Z\to h(Z)\text{ is an odd homeomorphism with }J(h(u))\leq J(u) \text{ for all }u\in
Z\big\}  
\end{align*}
with $\rho$ as in $(G_2)$ and the Krasnoselski genus $\gamma$. 
Exploiting \eqref{eq:Assumption} and hence $\kappa>0$ we show that the $d_k$ are critical values of $J$ and
form an increasing sequence. In particular, infinitely many different critical values exist. This, however,
contradicts the assumption~\eqref{eq:Assumption} given that Definition~\ref{defn:Kdecomposition}(iii)
implies that finite $\cK$-decomposition   allow for at most finitely many critical values. This contradiction
finishes the proof.

\medskip

\begin{prop} \label{prop:dk}
   Given the assumptions of Theorem~\ref{thm:CPTheorem} we have  $0< d_k\leq d_{k+1}<\infty$ for
   all $k\in\N$.
\end{prop}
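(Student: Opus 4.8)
The plan is to establish the three inequalities $d_k>0$, $d_k\le d_{k+1}$, and $d_{k+1}<\infty$ separately, using the standard min-max machinery adapted to the genus $\iota^*$.

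First I would check that $\iota^*$ is well-defined and monotone: $\iota^*$ is a nonnegative integer for every $A\in\Sigma$ (using that $\id\in\cH$, so the minimum is taken over a nonempty set, and that $\gamma$ of a compact symmetric set is finite), and $A\subset B$ implies $\iota^*(A)\le\iota^*(B)$ since $h(A)\subset h(B)$ for each admissible $h$ and $\gamma$ is monotone. Consequently the class $\{A\in\Sigma:\iota^*(A)\ge k+1\}$ is contained in $\{A\in\Sigma:\iota^*(A)\ge k\}$, and taking the infimum over a smaller class can only increase the value; this gives $d_k\le d_{k+1}$ immediately.

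For $d_k>0$: I would show that every $A\in\Sigma$ with $\iota^*(A)\ge 1$ satisfies $\sup_A J\ge \inf_{S_\rho^+}J>0$, the latter positivity being exactly $(G_2)$. Indeed, pick $h=\id\in\cH$; then $\iota^*(A)\ge 1$ forces $\gamma(A\cap S_\rho^+)\ge 1$, so in particular $A\cap S_\rho^+\ne\emptyset$, hence $\sup_A J\ge\inf_{S_\rho^+}J$. Since this holds for all admissible $A$ and since $\cH$-images only decrease $J$, one argues slightly more carefully: for arbitrary $h\in\cH$ realizing (or approximating) the minimum in $\iota^*(A)$ we still have $\gamma(h(A)\cap S_\rho^+)\ge 1$, so $h(A)\cap S_\rho^+\ne\emptyset$, and picking $w$ in that intersection gives $\sup_A J\ge \sup_{h(A)}J\ge J(w)\ge\inf_{S_\rho^+}J>0$ (using $J(h(u))\le J(u)$ in the wrong direction is avoided by noting $w=h(u)$ for some $u\in A$, so $J(u)\ge J(w)$). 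This yields the uniform lower bound $d_k\ge\inf_{S_\rho^+}J>0$ for all $k$.

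For $d_{k+1}<\infty$: here I would use $(G_3)$ together with the finite-dimensionality of $Z^-$. Fix $m$ large enough that the subspace $Z^-\oplus Z_m$ has dimension at least $k+1$, where $Z_m\subset Z^+$ is as in $(G_3)$. On the finite-dimensional space $Y_m:=Z^-\oplus Z_m$ we have $J(u)\to-\infty$ as $\|u\|\to\infty$, so there is $R>0$ with $J<0$ on $Y_m\setminus B_R(0)$; set $A_m:=\{u\in Y_m:\|u\|\le R\}$, a symmetric compact set with $\sup_{A_m}J<\infty$. The remaining point, which I expect to be the main technical obstacle, is the lower genus bound $\iota^*(A_m)\ge k+1$: for any $h\in\cH$, one must show $\gamma(h(A_m)\cap S_\rho^+)\ge\dim Y_m\ge k+1$. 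This is the usual intersection/Borsuk--Ulam argument: $h$ restricted to $\partial A_m$ is odd and maps into $\{J<0\}$, hence (since $\inf_{S_\rho^+}J>0$) misses $S_\rho^+$; the projection $\pr^+\colon Z\to Z^+$ composed with a retraction onto $S_\rho^+$ then yields, via the standard genus inequality $\gamma(h(A_m)\cap S_\rho^+)\ge\gamma(\partial A_m)-\gamma(Z^-\cup(\text{low-dim part}))$ (or directly via a Borsuk--Ulam-type argument as in \cite{SquSzu,EVEQ}), the bound $\ge\dim Y_m-\dim Z^-=\dim Z_m$; choosing $m$ so that $\dim Z_m\ge k+1$ finishes it. I would import this genus estimate essentially verbatim from \cite[proof of the corresponding lemma]{SquSzu} or \cite{EVEQ}, since it does not involve the PS-attracting hypothesis at all and is purely topological. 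Thus $d_{k+1}\le\sup_{A_m}J<\infty$, completing the proof. \qed
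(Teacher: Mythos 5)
Your proof is correct and follows essentially the same route as the paper: $d_k\le d_{k+1}$ by monotonicity of the class, $d_k>0$ by combining $\iota^*(A)\ge 1$ with $J(h(u))\le J(u)$ and $(G_2)$, and $d_k<\infty$ by exhibiting a compact symmetric $A\in\Sigma$ with $\iota^*(A)\ge k$, for which both you and the paper delegate the intersection/Borsuk--Ulam genus estimate to \cite{SquSzu} (resp.\ \cite{BarDing_NLSperiodic}). One small slip: in the finiteness argument you first claim $\gamma(h(A_m)\cap S_\rho^+)\ge\dim Y_m$, but the correct bound, as you state a line later, is $\dim Y_m-\dim Z^-=\dim Z_m$; since $(G_3)$ guarantees $\dim Z_m\nearrow\infty$, this is harmless.
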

\begin{proof}
  The inequality $d_k\leq d_{k+1}$ holds by definition. These values are not $+\infty$ given that, for any
  given $k\in\N$, the combination of $(G_1),(G_2),(G_3)$ and the argument from \cite[Lemma~2.16 (iv)]{SquSzu}
  or \cite[Lemma~4.5]{BarDing_NLSperiodic} 
  implies the existence of a set $A\in\Sigma$ with $\iota^*(A)\geq k$. 
  Moreover, any $A\in \Sigma$
  with $\iota^*(A)\geq  1$ implies $\gamma(h(A)\cap S_\rho^+) \geq 1$ for some $h\in\cH$, in particular
  $h(A)\cap S_{\rho}^+\neq\emptyset$. So
  $$
    d_k\geq d_1 
    = \inf_{A\in\Sigma, \iota^*(A)\geq 1} \sup_{u\in A} J(u) 
    \geq \inf_{h(A)\cap S_\rho^+ \neq \emptyset} \sup_{u\in A} J(h(u)) 
    \geq \inf_{S_\rho^+} J > 0.
  $$ 
\end{proof}

In the following, we write $U_\delta(\cK_d):=\{z\in Z: \dist(z,\cK_d)<\delta\}$, so
$U_\delta(\emptyset)=\emptyset$. 

\begin{prop} \label{prop:preliminaries}
  In addition to the assumptions of Theorem~\ref{thm:CPTheorem} suppose \eqref{eq:Assumption}. Let $\kappa$
  be given as in Definition~\ref{def:PSattracting}.
\begin{itemize}
  \item[(a)] For $0<\delta<\kappa$ and $d\in\R$ such that for all non-empty $K\subset \cK_d$ we
  have $\tau>0$ where
  \begin{align*}
    \tau := \inf \left\{\|J'(v)\|: v \in U_\delta(K) \sm U_{\frac{\delta}{2}}(K)\right\}.     
  \end{align*}   
  \item[(b)] If $d\in\R\sm\{0\}$ then $\gamma(\cK_d)\leq 1$. 
  \item[(c)] For all $d\in\R$ there is $\eps_0>0$ such that 
  $\cK_{\tilde d}=\emptyset$ for $0<|d-\tilde d|<\eps_0$.
\end{itemize}
\end{prop}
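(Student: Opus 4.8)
The three assertions are essentially independent, and I would dispose of (c) and (b) quickly and then concentrate on (a). For \textbf{part (c)} the point is that \eqref{eq:Assumption} forces the set of critical values to be finite: writing $\cK=\bigcup_{i\in I}\cK^i$ with $\#I<\infty$, Definition~\ref{defn:Kdecomposition}(iii) makes each $J(\cK^i)$ finite, hence so is $J(\cK)=\bigcup_{i\in I}J(\cK^i)$. Given $d\in\R$ I would set $\eps_0:=\dist\bigl(d,\,J(\cK)\sm\{d\}\bigr)$, with the convention $\eps_0:=1$ if $J(\cK)\subseteq\{d\}$; any $\tilde d$ with $0<|d-\tilde d|<\eps_0$ then lies outside $J(\cK)$, so $\cK_{\tilde d}=\emptyset$.

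\textbf{Part (b)} rests on the fact, recorded in the text preceding the proposition, that under \eqref{eq:Assumption} and Definition~\ref{defn:Kdecomposition}(ii) one has $\kappa>0$; thus $\cK$, and a fortiori $\cK_d$, is $\kappa$-separated, hence discrete, and it is closed because $J'$ is continuous. If $\cK_d=\emptyset$ there is nothing to prove. Otherwise $\cK_d$ is symmetric (as $J$ is even) and avoids $0$ (as $J(0)=0\neq d$), so it splits into disjoint antipodal pairs $\{u,-u\}$; choosing one representative from each pair to form a set $P$ with $\cK_d=P\cup(-P)$ and $P\cap(-P)=\emptyset$, I would define $\phi\colon\cK_d\to\R\sm\{0\}$ by $\phi\equiv 1$ on $P$ and $\phi\equiv -1$ on $-P$. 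This map is odd by construction and continuous because every point of $\cK_d$ is isolated; hence $\gamma(\cK_d)\leq 1$.

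For \textbf{part (a)} I would argue by contradiction. Fix $0<\delta<\kappa$, $d\in\R$ and a non-empty $K\subseteq\cK_d$, and suppose $\tau=0$. Then there are $v_n\in U_\delta(K)\sm U_{\delta/2}(K)$ with $\|J'(v_n)\|\to0$, and for each $n$ a point $u_n\in K$ with $\tfrac{\delta}{2}\leq\|v_n-u_n\|<\delta$, the lower bound being forced by $v_n\notin U_{\delta/2}(K)$. Now $(u_n)_n\subseteq\cK$ is a Palais--Smale sequence, since it has constant energy $d$ and vanishing derivative, and $(v_n)_n$ is one as well: it is bounded because $\cK_d$ is bounded by \eqref{eq:Assumption} and $\|v_n-u_n\|<\delta$, its derivatives vanish in the limit, and $(J(v_n))_n$ is bounded, as one sees from $J(v_n)-d=\int_0^1\skp{J'(u_n+t(v_n-u_n))}{v_n-u_n}\,dt$. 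Applying the PS-attracting property to the pair $(v_n)_n,(u_n)_n$ then yields either $\|v_n-u_n\|\to0$, impossible since $\|v_n-u_n\|\geq\delta/2$, or $\limsup_n\|v_n-u_n\|\geq\kappa$, impossible since $\|v_n-u_n\|<\delta<\kappa$. This contradiction proves $\tau>0$.

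The one genuinely delicate step is the verification, in (a), that the sequence $(v_n)_n$ extracted from $\tau=0$ is an admissible Palais--Smale sequence — in particular that $(J(v_n))_n$ is bounded — since that is exactly what makes Definition~\ref{def:PSattracting} applicable; in the abstract framework this has to be read off from the $C^1$-structure of $J$ on the bounded neighbourhood $\overline{U_\delta(\cK_d)}$, whereas in the concrete settings of Theorems~\ref{thm:N} and~\ref{thm:R3} it is immediate from the explicit form of the dual functional. Everything else is bookkeeping with the definitions of $\kappa$, the Krasnoselski genus, and the $\cK$-decomposition.
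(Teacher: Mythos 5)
Your proposal is correct and follows essentially the same route as the paper for all three parts: (c) via finiteness of the critical value set from \eqref{eq:Assumption} and Definition~\ref{defn:Kdecomposition}(iii), (b) via discreteness of $\cK_d$ (from $\kappa>0$) and the odd map onto $\{\pm1\}$, and (a) via the contradiction that a pair of PS-sequences at distance in $[\delta/2,\delta]\subset(0,\kappa)$ violates the PS-attracting property. The extra care you take about $(J(v_n))_n$ being bounded is a legitimate observation that the paper's own proof silently skips, but it does not change the argument.
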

\begin{proof}
  To prove (a) assume for contradiction that there is a sequence $(v_n)_n \subset U_\delta(K)
  \sm U_{\delta/2}(K)$ such that $J'(v_n) \to 0$. Then we can find a sequence
  $(w_n)\subset K\subset \cK_d$ with 
  $$ 
    0<\frac{\delta}{2}\leq \|v_n-w_n\|\leq \delta<\kappa\quad\text{for all }n\in\N.
  $$ 
  Since $(v_n),(w_n)$ are PS-sequences and $J$ is PS-attracting, this contradicts our choice of
  $\kappa$. So we must have $\tau>0$ and (a) is proved. 
  Since $J$ is $PS$-attracting and $d\neq 0$,  $\cK_d$
  is a discrete set that does not contain $0$.  Hence, $\cK_d = \bigcup_{i\in I} \{v_i,-v_i\}$ is a disjoint
  union for some set $I$, then $h:\cK_d\to\R\sm\{0\}, \pm v_i\mapsto \pm 1$ defines an odd and continuous map, 
  so $\gamma(\cK_d)\leq 1$. This proves (b).
  Finally,  \eqref{eq:Assumption} and Definition~\ref{def:PSattracting}(iii) imply that the set of critical
  values $\{J(u): u\in\cK\}$ is finite and hence discrete. This gives (c).
\end{proof}
 
%

To prove the theorem we consider a pseudo-gradient field on $Z\sm \cK$, i.e., a 
locally Lipschitz continuous map $H: Z\sm \cK \rightarrow Z$ 
with  
\begin{equation}\label{eq:pseudogradientI}
  \|H(w)\|\leq 2\|J'(w)\|,\qquad J'(w)[H(w)] \geq \|J'(w)\|^2. 
\end{equation}
We define the associated flow $\eta$ as the unique maximal solution of the initial value problem 
\begin{equation}\label{eq:pseudogradientII}
  \frac{d}{d t} \eta(t, w)=- H(\eta(t, w)),
  \qquad \eta(0, w)=w
  \qquad\text{for }w\in Z\sm \cK. 
\end{equation}
The maximal existence time in positive time direction is denoted by $T^{+}(w)$. The
flow $\eta$ is continuous at all points $(t,w)$ with $w\in Z\sm\cK$ and $0\leq t<T^+(w)$ by unique
local solvability of \eqref{eq:pseudogradientII}. The most important property is that $t\mapsto
J(\eta(t,w))$ is decreasing on $[0,T^+(w))$ due to
$$
  \frac{d}{dt}\Big(J(\eta(t,w))\Big)
  = -J'(\eta(t,w))[H(\eta(t,w))]
  \leq - \|J'(\eta(t,w))\|^2 
  \qquad\text{for }0<t<T^+(w).
$$
We use this in the following deformation argument involving suitable sublevel sets $J^c:=\{u\in Z:J(u)\leq
c\}$.

\begin{lem} \label{lem:entrancetime}
  In addition to the assumptions of Theorem~\ref{thm:CPTheorem} suppose \eqref{eq:Assumption}, let $d:=d_k$ for some
  $k\in\N$ and $0<\delta<\kappa$. Then there exists $\varepsilon>0$ such that  
    \begin{equation} \label{eq:deformationI}
      \lim _{t \rightarrow T^{+}(w)} J(\eta(t, w))<d-\varepsilon
      \quad\text{for all } w \in J^{ d+\varepsilon} \sm U_\delta(\cK_d).
    \end{equation} 
   Moreover, the entrance time map
    $$
      e: J^{ d+\varepsilon} \sm  U_\delta(\cK_d)  \to [0,\infty),\quad  w\mapsto \min\big\{ t\geq 0:
      J(\eta(t, w))\leq d-\varepsilon\big\}
    $$
    is well-defined, even and continuous.  
\end{lem}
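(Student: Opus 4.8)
The plan is a quantitative deformation argument along the negative pseudo-gradient flow $\eta$ from \eqref{eq:pseudogradientII}, whose technical heart is a uniform lower bound for $\|J'\|$ near the level $d=d_k$ but away from $\cK_d$. So the first step is to extract, from \eqref{eq:Assumption} (which forces $\kappa>0$ and, each piece of the $\cK$-decomposition being bounded, forces $\cK$ and hence $\cK_d$ to be bounded), constants $0<\varepsilon<\varepsilon_0$ (with $\varepsilon_0$ from Proposition~\ref{prop:preliminaries}(c)) and $\alpha>0$ such that
$$
  \|J'(u)\|\ \geq\ \alpha\qquad\text{whenever}\quad d-\varepsilon\leq J(u)\leq d+\varepsilon\quad\text{and}\quad \dist(u,\cK_d)\geq \tfrac{\delta}{2}.
$$
On the annulus $U_\delta(\cK_d)\setminus U_{\delta/2}(\cK_d)$ this is exactly Proposition~\ref{prop:preliminaries}(a); the mechanism is the one already used there, namely a would-be violating Palais--Smale sequence $(u_n)$ is compared with a sequence $(w_n)$ in $\cK_d$ with $\dist(u_n,\cK_d)\leq\|u_n-w_n\|<\delta$ — which lies in $\cK$ and is therefore itself a Palais--Smale sequence — and $\|u_n-w_n\|\in[\delta/2,\delta)\subsetneq[\delta/2,\kappa)$ contradicts PS-attractedness. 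For points with $\dist(u,\cK_d)>\delta$ one combines the same PS-attractedness/nearest-point mechanism with Proposition~\ref{prop:preliminaries}(c) (no critical values in a punctured neighbourhood of $d$, so the only Palais--Smale level that can occur in the slab $\{d-\varepsilon\leq J\leq d+\varepsilon\}$ is $d$ itself) and the boundedness of $\cK_d$.

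I expect this last point — the lower bound on the part of the slab \emph{outside} the annulus, where a violating Palais--Smale sequence might a priori run off to infinity, and the degenerate case $\cK_d=\emptyset$ — to be the main obstacle, and the place where PS-attractedness rather than the classical Palais--Smale condition really has to do the work; in the concrete settings of Theorems~\ref{thm:N} and~\ref{thm:R3} this confinement is underpinned by the Rellich--Kondrachov theorem and by the Nonvanishing Property of $\mathcal R$ together with $\Z^3$-periodicity, respectively, which is precisely what PS-attractedness abstracts.

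Granting the gradient bound, I would prove \eqref{eq:deformationI} by contradiction: suppose $J(\eta(t,w))\geq d-\varepsilon$ for all $t\in[0,T^+(w))$. Since $t\mapsto J(\eta(t,w))$ is non-increasing with $J(\eta(0,w))=J(w)\leq d+\varepsilon$, the trajectory stays in the slab $\{d-\varepsilon\leq J\leq d+\varepsilon\}$. It also stays outside $U_{\delta/2}(\cK_d)$: were it to reach $U_{\delta/2}(\cK_d)$ it would cross the annulus $U_\delta(\cK_d)\setminus U_{\delta/2}(\cK_d)$, so, with $t_2$ the first time $\dist(\eta(t,w),\cK_d)=\delta/2$ and $t_1<t_2$ the last previous time $\dist(\eta(t,w),\cK_d)=\delta$, one has displacement $\|\eta(t_2,w)-\eta(t_1,w)\|\geq\tfrac{\delta}{2}$, hence $\int_{t_1}^{t_2}\|J'(\eta(s,w))\|\,ds\geq\tfrac{\delta}{4}$ by $\|H\|\leq 2\|J'\|$ from \eqref{eq:pseudogradientI}, and therefore, using $\|J'\|\geq\tau$ on the annulus (Proposition~\ref{prop:preliminaries}(a)),
$$
  J(\eta(t_1,w))-J(\eta(t_2,w))=\int_{t_1}^{t_2}J'(\eta(s,w))[H(\eta(s,w))]\,ds\ \geq\ \tau\int_{t_1}^{t_2}\|J'(\eta(s,w))\|\,ds\ \geq\ \tfrac{\tau\delta}{4},
$$
so $J(\eta(t_2,w))\leq d+\varepsilon-\tfrac{\tau\delta}{4}<d-\varepsilon$ once $\varepsilon<\tfrac{\tau\delta}{8}$, a contradiction. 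Hence the whole trajectory lies in $\{d-\varepsilon\leq J\leq d+\varepsilon\}\setminus U_{\delta/2}(\cK_d)$, where the gradient bound gives $\tfrac{d}{dt}J(\eta(t,w))\leq-\|J'(\eta(t,w))\|^2\leq-\alpha^2$, so $J(\eta(t,w))\leq J(w)-\alpha^2 t\to-\infty$; this contradiction proves $J(\eta(t,w))<d-\varepsilon$ at some finite $t<T^{+}(w)$, in particular \eqref{eq:deformationI}. The same computation applied to the trajectory on $[0,e(w))$ shows $\dist(\eta(t,w),\cK_d)\geq\delta/2$ for all such $t$.

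For the entrance time map: well-definedness is immediate, since $\{t\geq 0: J(\eta(t,w))\leq d-\varepsilon\}$ is non-empty (by the previous step, or trivially if $J(w)\leq d-\varepsilon$) and closed by continuity of $t\mapsto J(\eta(t,w))$, so $e(w)<T^{+}(w)$ exists. Evenness follows once the pseudo-gradient field $H$ is chosen odd — possible since $J$ is even and $J'$ odd, by symmetrising any pseudo-gradient field — because then $\eta(t,-w)=-\eta(t,w)$ by uniqueness in \eqref{eq:pseudogradientII} and $J(\eta(t,-w))=J(\eta(t,w))$, whence $e(-w)=e(w)$. Continuity: at $w$ with $e(w)=t_*>0$ one has $J(\eta(t_*,w))=d-\varepsilon$ and $\dist(\eta(t_*,w),\cK_d)\geq\delta/2$, hence $\tfrac{d}{dt}J(\eta(t,w))|_{t=t_*}\leq-\alpha^2<0$, so $J(\eta(\cdot,w))$ passes strictly through the level $d-\varepsilon$ at $t_*$; joint continuity of $(t,w)\mapsto\eta(t,w)$ then gives $e(w')\in(t_*-\sigma,t_*+\sigma)$ for $w'$ close to $w$. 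The case $e(w)=0$ is handled likewise, noting that the flow started at any $w'$ in the domain leaves $\{J\geq d-\varepsilon\}$ within time $(J(w')-(d-\varepsilon))_+/\alpha^2$, which tends to $0$ as $w'\to w$.
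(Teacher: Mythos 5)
Your derivation of \eqref{eq:deformationI} hinges on a uniform lower bound $\|J'(u)\|\geq\alpha$ on the whole slab $\{d-\eps\leq J\leq d+\eps\}\sm U_{\delta/2}(\cK_d)$, and this is precisely the step that does not follow from the hypotheses. The nearest-point comparison only produces a contradiction with PS-attractedness when the violating Palais--Smale sequence $(u_n)$ and the comparison sequence $(w_n)\subset\cK_d$ satisfy $\limsup_n\|u_n-w_n\|<\kappa$; on the annulus $U_\delta(\cK_d)\sm U_{\delta/2}(\cK_d)$ this holds because $\delta<\kappa$ (that is Proposition~\ref{prop:preliminaries}(a)), but outside $U_\delta(\cK_d)$ one only knows $\|u_n-w_n\|\geq\delta$, and as soon as $\dist(u_n,\cK_d)\geq\kappa$ the PS-attracting property is vacuously satisfied and yields nothing. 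Neither Proposition~\ref{prop:preliminaries}(c) nor the boundedness of $\cK_d$ closes this, and $\cK_d$ may a priori be empty (the lemma is used to prove $\cK_{d_k}\neq\emptyset$, so you cannot assume it). Since the whole point of Theorem~\ref{thm:CPTheorem} is to dispense with the Palais--Smale condition, Palais--Smale sequences at level $d$ staying at distance $\geq\delta/2$ from $\cK_d$ cannot be excluded in the abstract setting; your own remark that this is ``the main obstacle'' is accurate, but the obstacle is not overcome, and the appeal to Rellich--Kondrachov and the Nonvanishing Property is unavailable at this level of generality.

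The paper's proof circumvents the missing gradient bound by a different mechanism: assuming a trajectory stays in the slab for all $t<T^+(w)$, it first proves that $\eta(t,w)$ \emph{converges} to some $w^*\in\cK_d$. For $T^+(w)<\infty$ this is a Cauchy estimate; for $T^+(w)=\infty$ one extracts, from the same trajectory, two Palais--Smale sequences $v_n=\eta(s_n^1,w)$, $w_n=\eta(s_n^2,w)$ whose mutual distance is pinned in $[\eps/3,\,2\eps]\subset(0,\kappa)$ by an arc-length/energy-drop estimate --- this is the one place where PS-attractedness genuinely enters, and it works because both sequences live on one trajectory, so their separation is controlled by the energy dissipated. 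Once $w^*\in\cK_d$ is known, the trajectory must cross the annulus $U_\delta(\{w^*\})\sm U_{\delta/2}(\{w^*\})$, and there Proposition~\ref{prop:preliminaries}(a) applies and the crossing costs more energy than $2\eps$ allows (your own crossing computation, which is correct and even slightly sharper than the paper's, is exactly this step). Your treatment of well-definedness and evenness of $e$ is fine, and your continuity argument can be salvaged without the global bound by noting that $\eta(e(w),w)$ is not a critical point since $\cK_{d-\eps}=\emptyset$ for $\eps<\eps_0$ by Proposition~\ref{prop:preliminaries}(c), so $J\circ\eta(\cdot,w)$ crosses the level $d-\eps$ strictly; but as it stands the central claim of your argument is unproved and, in general, false.
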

\begin{proof} 
  In the following we verify the claim for 
  $$
    0<\eps< \min\left\{ \frac{\delta\tau}{\sqrt{32}}, \frac{\kappa}{2},\eps_0\right\}
  $$
  where $\tau,\kappa,\eps_0>0$ are as in Proposition~\ref{prop:preliminaries}. Assume for contradiction that 
  \eqref{eq:deformationI} does not hold for such $\eps$. Then we find some $w \in J^{d+\varepsilon} \sm
  \ov{U_\delta(\cK_d)}$ such that
  \begin{equation} \label{eq:entrancetimeI}
    d-\eps\leq J(\eta(t,w))\leq d+\eps\quad\text{for all }t\in [0,T^+(w)).
  \end{equation}
  Using \eqref{eq:entrancetimeI} we first show that the flow converges to some critical
  point at energy level $d$, i.e.,
  \begin{equation} \label{eq:deformationII}
    \lim_{t\to T^+(w)} \eta(t,w) = w^* \qquad\text{where }w^*\in\cK_d.
  \end{equation} 
  We first prove this in the case $T^{+}(w)<\infty$. For $0<\tilde t<t<T^{+}(w)$ we have  
\begin{align} \label{eq:deformationIII}
\begin{aligned}
\|\eta(t, w)-\eta(\tilde t, w)\| 
& \leq  \int_{\tilde t}^t\| H(\eta(s, w))\| \,ds 
\;\stackrel{\eqref{eq:pseudogradientI}}\leq\; 2 \int_{\tilde t}^t\| J'(\eta(s, w))\| \,ds \\ 
 &\stackrel{\eqref{eq:pseudogradientI}}\leq 2  \int_{\tilde t}^t \sqrt{J'(\eta(s, w))[ H(\eta(s, w))]} \,ds \\
& \leq 2 \sqrt{t-\tilde t}\left(\int_{\tilde t}^t J'(\eta(s, w))[ H(\eta(s, w))]
\,ds\right)^{\frac{1}{2}} \\ 
 &\stackrel{\eqref{eq:pseudogradientII}}=2 \sqrt{t-\tilde t}\,\Big(J(\eta(\tilde t, w))-J(\eta(t,
 w))\Big)^{\frac{1}{2}} \\
&\stackrel{\eqref{eq:entrancetimeI}}\leq 2 \sqrt{t-\tilde t}\, (2\eps)^{\frac{1}{2}}.
\end{aligned}
\end{align}
 Since $T^+(w)$ is finite, this estimate implies that 
 $\eta(t,w)$ is Cauchy and hence converges as $t\to T^+(w)$. The limit $w^*$ must be a critical point of $J$
 because otherwise the trajectory $t \mapsto \eta(t, w)$ could be continued beyond $T^{+}(w)$ thanks to the
 local solvability of the initial value problem~\eqref{eq:pseudogradientII} with initial data in $Z\sm \cK$. 
 So we have $w^*\in\cK$ and \eqref{eq:entrancetimeI} gives $d-\eps\leq J(w^*)\leq d+\eps$. From
 $0<\eps<\eps_0$ and Proposition~\ref{prop:preliminaries}(c) we conclude $w^*\in\cK_d$, so
 \eqref{eq:deformationII} is proved.  \\
 Next we show \eqref{eq:deformationII} in the case  $T^{+}(w)=\infty$. Since the map $t\mapsto J(\eta(t,w))$
 is decreasing and bounded from below in view of \eqref{eq:entrancetimeI}, we find that $J(\eta(t,w))$
 converges as $t\to +\infty$. We now deduce that $\eta(t,w)$   converges as well. 
 Assume for contradiction that this is not the case. Then there exists a sequence $\left(t_n\right)_n
 \subset[0, \infty)$ with $t_n \rightarrow \infty$ and $\| \eta\left(t_n, w\right)-$ $\eta\left(t_{n+1},
 w\right) \|=\varepsilon$ for every $n$. Choose the smallest $t_n^1 \in\left(t_n, t_{n+1}\right)$ such that
 $\| \eta\left(t_n, w\right)- \eta\left(t_n^1, w\right) \|=\frac{\varepsilon}{3}$ and let 
$\mu_n:=\min _{s \in\left[t_n, t_n^1\right]}\| J'(\eta(s, w))\|$. Then
$$
\begin{aligned}
\frac{\varepsilon}{3} 
& =\left\|\eta\left(t_n^1, w\right)-\eta\left(t_n, w\right)\right\| 
\leq \int_{t_n}^{t_n^1}\| H(\eta(s, w)) \| d s
\stackrel{\eqref{eq:pseudogradientI}}\leq 2 \int_{t_n}^{t_n^1}\|J'(\eta(s, w))\| d s \\
& \leq \frac{2}{\mu_n} \int_{t_n}^{t_n^1}\|J'(\eta(s, w))\|^2 d s 
\stackrel{\eqref{eq:pseudogradientI}}\leq \frac{2}{\mu_n}
\int_{t_n}^{t_n^1} J'(\eta(s,w))[H(\eta(s, w))] d s \\
&\stackrel{\eqref{eq:pseudogradientII}}=\frac{2}{\mu_n}\Big(J\left(\eta\left(t_n,
w\right)\right)-J\left(\eta\left(t_n^1, w\right)\right)\Big).
\end{aligned}
$$
Since $J(\eta(t,w))$ converges as $t\to \infty$, we have $J(\eta(t_n,
w))-J(\eta(t_n^1, w)) \rightarrow 0$ and thus $\mu_n \rightarrow 0$ as
$n\to\infty$.
So there exist $s_n^1 \in\left[t_n, t_n^1\right]$ such that $ J'(v_n) \rightarrow 0$, where
$v_n:=\eta(s_n^1, w)$.   Similarly, we find a largest $t_n^2 \in(t_n^1, t_{n+1})$ for which
$\left\|\eta(t_{n+1}, w)-\eta(t_n^2, w)\right\|=\frac{\varepsilon}{3}$ and then $w_n:=\eta(s_n^2, w)$
satisfies $J'(w_n) \rightarrow 0$. As $\left\|v_n-\eta(t_n, w)\right\| \leq \frac{\varepsilon}{3}$ and
$\left\|w_n-\eta(t_{n+1}, w)\right\| \leq \frac{\varepsilon}{3}$,
 $$ 
   \left(v_n\right)_n,\left(w_n\right)_n \quad\text{are Palais-Smale sequences with}\quad 
  \frac{\varepsilon}{3} \leq\left\| v_n-w_n \right\| \leq 2 \varepsilon<\kappa.
$$
This, however, contradicts our choice of $\kappa$, so the
assumption was false.
Hence, $\eta(t, w)$ converges as $t\to\infty$. The limit must be a critical point of $J$ because
 otherwise $\frac{d}{dt} \big(J(\eta(t,w))\big)$ would be uniformly negative for large $t$, which
 violates~\eqref{eq:entrancetimeI}. So~\eqref{eq:deformationII} also holds in the case $T^+(w)=+\infty$. 
  
  \medskip
  
  
  From \eqref{eq:deformationII} we infer that the flow $t\mapsto \eta(t,w)$ 
  eventually enters the region $U_\delta(\{w^*\})$ at some time $t_1\in (0,T^+(w))$ and remains outside
  of the region $U_{\delta/2}(\{w^*\})$ until some time $t_2\in (t_1,T^+(w))$. Formally,
  \begin{align*}
   t_1 &:=\max \big\{t \in\left[0, T^{+}(w)\right): \eta(t, w) \notin U_\delta(\{w^*\})\big\},  \\
   t_2 &:=\inf \big\{t \in\left(t_1, T^{+}(w)\right): \eta(t, w) \in U_{\delta/2}(\{w^*\})\big\}.
 \end{align*} 
 As in \eqref{eq:deformationIII} we get the inequality 
\begin{align*}
  \frac{\delta}{2} \leq \|\eta(t_2, w)-\eta(t_1, w)\| \leq 2 \sqrt{t_2-t_1}\,(2\eps)^{1/2},
  \quad\text{and thus } t_2-t_1 \geq \frac{\delta^2}{32\eps}.
\end{align*}
On the other hand, Proposition~\ref{prop:preliminaries}(a) gives  $\|J'(\eta(s,w)\|\geq \tau>0$  for all 
$s\in [t_1,t_2]$ and thus
\begin{align*}
d 
&\stackrel{\eqref{eq:deformationII}}= \lim_{t\to T^+(w)} J\left(\eta\left(t, w\right)\right)   
\;\leq\; J\left(\eta\left(t_2, w\right)\right)  \\
&\stackrel{\eqref{eq:entrancetimeI}}\leq d+\eps+ J\left(\eta\left(t_2, w\right)\right)-J\left(\eta\left(t_1,
w\right)\right) \\
&\stackrel{\eqref{eq:pseudogradientII}}= d+\eps-\int_{t_1}^{t_2} J'(\eta(s, w))[H(\eta(s, w))]\,ds \\
&\stackrel{\eqref{eq:pseudogradientI}}\leq d+\eps-  \int_{t_1}^{t_2}\|J'(\eta(s, w))\|^2 \,ds \\ 
&\leq d+\eps- \tau^2\left(t_2-t_1\right) 
\;\leq\; d+\eps-\frac{\tau^2\delta^2}{32\eps} \\
&< d,
\end{align*}
which yields a contradiction. So \eqref{eq:entrancetimeI} is false and \eqref{eq:deformationI} is proved.
 
 \medskip

Finally, thanks to~\eqref{eq:deformationI} we know that $e(w)\in [0,T(w))$ is a well-defined finite
nonnegative number characterized by $J(\eta(e(w),w))= \min\{J(w),d-\eps\}$. Since $J$ and $\eta$ are
continuous and $t\mapsto J(\eta(t,w))$ is decreasing near each $w\in J^{d+\eps}\sm U_{\delta}(\cK_d)$, we
obtain that the entrance time map is continuous.
\end{proof}

\medskip

\textbf{Proof of Theorem~\ref{thm:CPTheorem}:}\; Assume for contradiction that \eqref{eq:Assumption} is true.
We then show  
$$
  \cK_{d_k}\neq \emptyset \qquad\text{and}\qquad  d_{k+1}>d_k  \quad\text{for all }k\in\N.
$$ 
Indeed, by the continuity property of the genus and Proposition~\ref{prop:preliminaries}(b), there exists
$\delta>0$ such that $\gamma(\bar{U})=\gamma(\cK_{d_k})\leq 1$ where
$U:=U_\delta(\cK_{d_k})$ and $0<\delta<\kappa$. We may then choose $\varepsilon>0$ as in
Lemma~\ref{lem:entrancetime}.
By definition of $\iota^*$ we can find $A\in\Sigma$ with $\iota^*(A)\geq k$ and $\sup_A J \leq d_k +\eps$. 
Then we have  $A\sm U\in\Sigma$ and, by~\eqref{eq:deformationI}, 
\begin{equation} \label{eq:deformationIV}
  \sup_{u\in A\sm U} J(\eta(e(u),u))\leq d_k-\eps.
\end{equation} 
The map $\tilde h(u):= \eta(e(u),u)$ is well-defined, odd and continuous on $J^{d_k+\eps}\sm U$ and can be
extended to some function $h\in\mathcal H$ that is defined on $J^{d_k+\eps}$. For instance,  define for 
$u\in J^{d_k+\eps}$
\begin{align*}
  h(u) &:= \eta(e(u),u) &&\hspace{-2cm}\text{if }u\in \dist(u,\cK_{d_k})\geq \delta,\\
  h(u) &:= \eta\Big( (2\lambda\delta^{-1}-1) e(u),u\Big) &&\hspace{-2cm}\text{if }\dist(u,\cK_{d_k})=\lambda
  \in \big( \frac{\delta}{2},\delta\big),\\ 
  h(u) &:= u  &&\hspace{-2cm}\text{if }\dist(u,\cK_{d_k})\leq \frac{\delta}{2}.
\end{align*}
From the definition of $d_l$ and \eqref{eq:deformationIV} we get $\iota^*(h(A\sm U))=\iota^*(\tilde h(A\sm
U))\leq k-1$.  So the mapping properties of $\iota^*$ from \cite[Lemma~2.16]{SquSzu} and
Proposition~\ref{prop:preliminaries}(b) give\footnote{Lemma 2.16 in \cite{SquSzu} is stated and proved 
only for $Z^-=\{0\}$, but the same proof yields the result for any finite-dimensional $Z^-$.
}
\begin{align*}
  k\leq \iota^*(A) 
  \leq \iota^*(A\sm U) + \gamma(\ov U)  
  \leq \iota^*(h(A\sm U)) + \gamma(\cK_{d_k})    
  \leq k-1 + \gamma(\cK_{d_k}),
\end{align*}
which implies  $\cK_{d_k}\neq \emptyset$. Moreover, $d_k= d_{k+1}$ is impossible. 
Indeed, under this assumption we may even choose  $A\in\Sigma$ with $\iota^*(A)\geq k+1$ with the properties
 given above, so 
 $$
   k+1 \leq \iota^*(A)\leq k-1+\gamma(\cK_d) \leq k,
 $$
 a contradiction. Given that the critical levels are nondecreasing, we obtain $d_{k+1}>d_k$ for all
 $k\in\N$. So there are infinitely many critical levels and every $\cK$-decomposition of $J$ is infinite. So
 the assumption \eqref{eq:Assumption} is false, which is all we had to prove.
 \qed

\section{The linear Neumann problem om bounded domains}

  We are first interested in a solution theory for the linear boundary value problem   
 \begin{equation} \label{eq:LinBVPN}
      \nabla\times \big(\mu(x)^{-1}\nabla\times E\big) -\lambda\eps(x) E 
      = \eps(x) g \quad\text{in }\Omega,\qquad\quad 
      (\mu(x)^{-1}\nabla\times E)\times\nu = 0 \quad\text{on }\partial\Omega
 \end{equation}
 where $\lambda\in\R$ and $E\in \mathcal V$. A 
 weak solution $E\in \mathcal V$ of \eqref{eq:LinBVPN} is characterized by 
\begin{equation} \label{eq:weaksolutionI} 
  \int_\Omega   \mu(x)^{-1}(\nabla\times E)\cdot(\nabla\times \Phi)\,dx
  -  \lambda\int_\Omega  \eps(x)E\cdot\Phi\,dx
  = \int_\Omega \eps(x)g\cdot\Phi\,dx 
  \qquad\text{for all }\Phi\in\mathcal V. 
\end{equation} 
In order to identify the right function space for $g$ we use a Helmholtz Decomposition. Define
  \begin{align*}
    X^{p'}
    &:= \Big\{ E\in L^{p'}(\Omega;\R^3): \int_\Omega \eps(x) E\cdot \nabla\phi\,dx = 0 \text{ for all
    }\phi\in W^{1,p}(\Omega)\Big\},\\
    Y^{p'} 
    &:= \big\{\nabla u: u\in W^{1,p'}(\Omega)\big\}.     
  \end{align*}
  By the Gauss-Green formula elements of $X^{p'}$ are not only divergence-free in $\Omega$, but also
  satisfy $\eps(x)E\cdot\nu= 0$ on $\partial\Omega$ in a distributional sense. We also need 
  \begin{align*}
    (Y^{p'})^{\perp_\eps} &:= \Big\{f\in L^{p}(\Omega;\R^3): \int_\Omega \eps(x)f\cdot g\,dx = 0 \text{ for all }g\in
    Y^{p'}\Big\},  \\
    (X^{p'})^{\perp_\eps} &:= \Big\{g\in L^{p}(\Omega;\R^3): \int_\Omega \eps(x)f\cdot g\,dx = 0 \text{ for all }f\in
    X^{p'}\Big\}.
  \end{align*} 
  From~\cite[Theorem~1]{AuscherQafsaoui} and $W^{1,3}(\Omega)\subset \text{VMO}(\Omega)$ 
  \cite[Theorem 3.3(ii)]{CiaPick} we get the following.
     
 \begin{prop}[Auscher, Qafsaoui] \label{prop:AuschQafI}
   Assume (A1),(A2) and $1<p<\infty$. Then for any given $f\in L^{p'}(\Omega;\R^3)$ the boundary value
   problem
   \begin{equation} \label{eq:BVPAuschQafI}
     \nabla\cdot \big(\eps(x)(f+\nabla u)\big) = 0 \quad\text{in }\Omega,\qquad 
     \eps(x)(f+\nabla u)\cdot \nu = 0 \quad\text{on }\partial\Omega
   \end{equation}
   has a weak solution in $u\in W^{1,p'}(\Omega)$ that is unique up to constants and satisfies 
   $\|\nabla u\|_{p'} \les \|f\|_{p'}$. 
 \end{prop}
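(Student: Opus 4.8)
The statement asks for existence, uniqueness up to constants, and an a priori estimate for the Neumann problem for the divergence-form operator $u\mapsto \nabla\cdot(\eps\nabla u)$ with inhomogeneous conormal data $-\eps f\cdot\nu$. The plan is to invoke the cited result of Auscher and Qafsaoui~\cite[Theorem~1]{AuscherQafsaoui}, which establishes $L^q$-bounds for the Riesz transform $\nabla(\nabla\cdot(\eps\nabla))^{-1}\nabla\cdot$ (equivalently, $W^{1,q}$-solvability of the Neumann problem) on bounded Lipschitz-type domains, under the hypothesis that the coefficient matrix $\eps$ is uniformly elliptic and has coefficients in $\mathrm{VMO}(\Omega)$. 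So the first step is to check that our assumptions (A1) and (A2) put us in the setting of that theorem: (A1) gives a bounded $C^1$-domain with an exterior ball condition, which is in particular a Lipschitz domain of the required regularity; (A2) gives uniform positive definiteness of $\eps$ and the regularity $\eps\in W^{1,3}(\Omega;\R^{3\times 3})$. The bridge to the VMO hypothesis is the embedding $W^{1,3}(\Omega)\subset \mathrm{VMO}(\Omega)$ in dimension three, for which the excerpt already cites \cite[Theorem~3.3(ii)]{CiaPick}; this is the borderline Sobolev embedding ($W^{1,n}\hookrightarrow\mathrm{VMO}$ in $\R^n$) and requires the $C^1$- (or Lipschitz-) regularity of $\Omega$ for the extension. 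With these two ingredients in place, \cite[Theorem~1]{AuscherQafsaoui} applies with exponent $q=p'$ and yields, for each $f\in L^{p'}(\Omega;\R^3)$, a weak solution $u\in W^{1,p'}(\Omega)$ of \eqref{eq:BVPAuschQafI}, meaning
\[
  \int_\Omega \eps(x)(f+\nabla u)\cdot\nabla\phi\,dx = 0 \qquad\text{for all }\phi\in W^{1,p}(\Omega),
\]
together with the bound $\|\nabla u\|_{p'}\lesssim \|f\|_{p'}$, where the implicit constant depends only on $p$, $\Omega$, the ellipticity constants, and the VMO-modulus of $\eps$.

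Next I would record uniqueness up to additive constants. Suppose $u_1,u_2\in W^{1,p'}(\Omega)$ both solve \eqref{eq:BVPAuschQafI} weakly; then $w:=u_1-u_2$ satisfies $\int_\Omega \eps\nabla w\cdot\nabla\phi\,dx=0$ for all $\phi\in W^{1,p}(\Omega)$. The natural argument is to take $\phi=w$ as a test function and conclude $\int_\Omega \eps\nabla w\cdot\nabla w\,dx=0$, whence $\nabla w\equiv 0$ by uniform positive definiteness and $w$ is constant since $\Omega$ is connected. The only subtlety is admissibility of the test function: one needs $w\in W^{1,p}(\Omega)$, which is automatic when $p'\ge p$, i.e. $p\le 2$, but not when $p>2$. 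For $p>2$ one instead appeals to the fact that the Neumann problem is, by the same Auscher–Qafsaoui theorem applied with a Hölder-conjugate exponent, also solvable in $W^{1,p}$; a standard duality/density argument (test the $W^{1,p'}$-equation against a sequence in $W^{1,p}\cap W^{1,p'}$ approximating $w$ in $W^{1,p'}$, or invert the roles of $p$ and $p'$) then forces $\nabla w=0$. Alternatively, uniqueness follows directly from the injectivity part of the isomorphism statement in \cite[Theorem~1]{AuscherQafsaoui} on $W^{1,p'}(\Omega)/\R$. Either way this step is routine once the linear theory is granted.

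The step I expect to carry the real weight is simply the verification that the hypotheses of \cite[Theorem~1]{AuscherQafsaoui} are met — in particular that $W^{1,3}$-regularity of $\eps$ (rather than, say, continuity or Hölder continuity) suffices, which is exactly the point of routing through $\mathrm{VMO}$ via \cite{CiaPick}, and that the domain regularity in (A1) (bounded $C^1$, exterior ball) is within the class of domains handled there. There is no genuinely hard analysis to do here on our side: the Proposition is essentially a citation, packaged so that its output (the Helmholtz decomposition $L^{p'}=X^{p'}\oplus Y^{p'}$ and its $L^p$-dual) can be used in the next section. I would therefore keep the proof short: cite the theorem, note the two embeddings that make its hypotheses hold under (A1)–(A2), and dispatch uniqueness by the energy identity together with the symmetry of the $L^p$/$L^{p'}$ theory.
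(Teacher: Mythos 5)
Your proposal matches the paper's approach exactly: the paper gives no proof, stating the proposition as an immediate consequence of Auscher--Qafsaoui \cite[Theorem~1]{AuscherQafsaoui} combined with the embedding $W^{1,3}(\Omega)\subset\mathrm{VMO}(\Omega)$ from \cite[Theorem~3.3(ii)]{CiaPick}, which is precisely the route you take. Your additional remarks on the test-function admissibility issue in the uniqueness step for $p>2$ and the duality fix are correct but go slightly beyond what the paper records, since the paper treats the whole statement (including uniqueness modulo constants) as part of the cited theorem.
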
 
 
 This preliminary result provides a Helmholtz Decomposition that allows to set up a Fredholm theory for
 \eqref{eq:LinBVPN}. This problem reads $(\mathcal L-\lambda)E = g$ where  
  $$
    \mathcal L E := \eps(x)^{-1}\nabla\times \big(\mu(x)^{-1}\nabla\times E\big),\qquad
    \skp{E}{F}_\eps := \int_\Omega \eps(x)E\cdot F\,dx. 
 $$  
 The crucial observation is that $\mathcal L$ is a selfadjoint operator in the Hilbert
 space $(L^2(\Omega;\R^3),\skp{\cdot}{\cdot}_\eps)$. In the following we  write $E\perp_\eps
 F$ if $\skp{E}{F}_\eps=0$ and analogously for subspaces of $L^2(\Omega;\R^3)$.  
 
\begin{prop} \label{prop:HelmholtzDecompositionN}  
  Assume (A1),(A2) and $1<p<\infty$. Then we have 
  $$
    L^{p'}(\Omega;\R^3)=X^{p'}\oplus Y^{p'}
    \qquad\text{with}\quad  (Y^p)^{\perp_\eps} = X^{p'}, Y^p = (X^{p'})^{\perp_\eps}.
  $$
\end{prop}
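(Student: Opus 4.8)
The plan is to extract everything from the Auscher--Qafsaoui result (Proposition~\ref{prop:AuschQafI}) together with elementary duality pairings. First I would establish the direct-sum decomposition $L^{p'}(\Omega;\R^3)=X^{p'}\oplus Y^{p'}$. Given $E\in L^{p'}(\Omega;\R^3)$, apply Proposition~\ref{prop:AuschQafI} with $f=E$ to obtain $u\in W^{1,p'}(\Omega)$ with $\nabla\cdot(\eps(E+\nabla u))=0$ in $\Omega$ and $\eps(E+\nabla u)\cdot\nu=0$ on $\partial\Omega$; the weak formulation of \eqref{eq:BVPAuschQafI} says precisely that $\int_\Omega \eps(x)(E+\nabla u)\cdot\nabla\phi\,dx=0$ for all $\phi\in W^{1,p}(\Omega)$, i.e. $E_1:=E+\nabla u\in X^{p'}$. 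Writing $E=E_1+(-\nabla u)$ with $-\nabla u\in Y^{p'}$ gives existence of the decomposition. For the directness, suppose $E_1=\nabla v$ with $E_1\in X^{p'}$ and $v\in W^{1,p'}(\Omega)$: the defining condition of $X^{p'}$ tested against $\phi=v$ (legitimate since $W^{1,p'}\subset W^{1,p}$ is false in general for $p'<p$, so one must be slightly careful) — here the cleaner route is to test the $X^{p'}$ condition against a sequence $\phi_k\in W^{1,p}$ approximating $v$; alternatively, observe that $E_1=\nabla v$ already solves \eqref{eq:BVPAuschQafI} with $f=0$, so by the uniqueness-up-to-constants part of Proposition~\ref{prop:AuschQafI} we get $\nabla v=0$, hence $E_1=0$. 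This uniqueness argument avoids the integrability mismatch and is the approach I would write down.

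Next I would identify the orthogonal complements. For $Y^p=(X^{p'})^{\perp_\eps}$: the inclusion $Y^p\subseteq (X^{p'})^{\perp_\eps}$ is immediate from the definition of $X^{p'}$ (every $g=\nabla\phi\in Y^p$ with $\phi\in W^{1,p}$ satisfies $\int_\Omega\eps f\cdot\nabla\phi\,dx=0$ for all $f\in X^{p'}$). For the reverse inclusion, take $g\in L^p(\Omega;\R^3)$ with $\int_\Omega\eps f\cdot g\,dx=0$ for all $f\in X^{p'}$; decompose $g=g_1+\nabla\psi$ using the already-proved decomposition $L^p=X^p\oplus Y^p$ (apply the first part with $p'$ replaced by $p$, which is legitimate since $1<p<\infty$ is symmetric in the hypothesis). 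Then testing the orthogonality relation requires pairing an element of $X^{p'}$ against $g_1\in X^p$; I would show $g_1=0$ by a symmetric duality argument — the pairing $\int_\Omega \eps f\cdot g_1\,dx$ over all $f\in X^{p'}$ must vanish, and since $X^{p'}$ and $X^p$ are dual via the $\eps$-pairing (both being the kernels of the same Helmholtz projection on $L^{p'}$ resp. $L^p$), nondegeneracy forces $g_1=0$, hence $g=\nabla\psi\in Y^p$. The statement $(Y^p)^{\perp_\eps}=X^{p'}$ is handled symmetrically: $X^{p'}\subseteq (Y^p)^{\perp_\eps}$ by definition, and conversely if $f\in L^{p'}$ is $\eps$-orthogonal to all of $Y^p$ then by definition of $X^{p'}$ we get $f\in X^{p'}$ directly — this direction is in fact immediate since $Y^p\supseteq\{\nabla\phi:\phi\in W^{1,p}(\Omega)\}$ is exactly the test class in the definition of $X^{p'}$.

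The main obstacle I anticipate is the integrability bookkeeping in the duality step: the $\eps$-pairing $\int_\Omega \eps(x) f\cdot g\,dx$ is well-defined for $f\in L^{p'}$, $g\in L^p$ since $\eps\in L^\infty$, but when I decompose both $g$ and the test functions I must make sure the Helmholtz projections on $L^p$ and $L^{p'}$ are genuinely adjoint to each other with respect to this pairing. The clean way to see this is to note that the projection $P\colon L^q\to X^q$ (for $q\in\{p,p'\}$) built from Proposition~\ref{prop:AuschQafI} is given by $Pf=f+\nabla u$, and for $f\in L^{p'}$, $g\in L^p$ one has $\int_\Omega\eps Pf\cdot g\,dx=\int_\Omega \eps f\cdot Pg\,dx$ because the correction terms $\nabla u$ pair to zero against the divergence-free-with-vanishing-flux parts — this is exactly the content of the defining conditions of $X^{p'}$ and $X^p$. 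Once this adjointness is recorded, all four identities follow by one-line arguments. I would therefore structure the proof as: (1) the $\eps$-adjointness of the Helmholtz projections, (2) the direct-sum decomposition via Proposition~\ref{prop:AuschQafI} and its uniqueness clause, (3) the four orthogonality identities as corollaries of (1) and (2).
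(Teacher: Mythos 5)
Your proposal is correct, and for the hard inclusion $(X^{p'})^{\perp_\eps}\subset Y^p$ it takes a genuinely different route from the paper. Both you and the paper prove the direct sum $L^{p'}=X^{p'}\oplus Y^{p'}$ from Proposition~\ref{prop:AuschQafI} and its uniqueness clause in essentially the same way, and both read off $(Y^p)^{\perp_\eps}=X^{p'}$ from the definitions. The divergence is in the last identity: the paper observes that $\eps^{-1}(\nabla\times\Phi)\in X^{p'}$ for every $\Phi\in C_0^\infty(\Omega;\R^3)$, hence any $g\in(X^{p'})^{\perp_\eps}$ is curl-free in the distributional sense, and then invokes the Poincar\'e-type Lemma~\ref{lem:curlfree} (proved separately via mollification on balls plus a partition of unity) to write $g=\nabla u$ with $u\in W^{1,p}(\Omega)$. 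You instead decompose $g=g_1+\nabla\psi$ by the already-proved splitting of $L^p$, and kill $g_1\in X^p$ by a duality argument: the key step, which you correctly identify as the main technical point, is that the two Helmholtz projections $\Pi_{p'}:L^{p'}\to X^{p'}$ and $\Pi_p:L^p\to X^p$ are adjoint under the $\eps$-pairing, after which $\int_\Omega\eps h\cdot g_1\,dx=\int_\Omega\eps\,\Pi_{p'}h\cdot g_1\,dx=0$ for all $h\in L^{p'}$, forcing $\eps g_1=0$ and hence $g_1=0$. Your route is self-contained within the functional-analytic framework and avoids the auxiliary de~Rham lemma; the paper's route is shorter at the cost of a separate geometric lemma which it however also uses elsewhere (notably in the proof of Proposition~\ref{prop:linear_theoryN}). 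One small wording slip: you describe $X^{p'}$ and $X^p$ as ``kernels'' of the Helmholtz projection, when they are in fact the ranges (the kernels are $Y^{p'},Y^p$); this does not affect the argument. You should also make the final nondegeneracy deduction explicit in the form sketched above rather than asserting it, since as written it reads as a claim rather than a proof, but the adjointness you record does deliver it in one line.
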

\begin{proof}
    First of all, $X^{p'}$, $Y^{p'}$ are closed subspaces of $L^{p'}(\Omega;\R^3)$ and the intersection of
    these subspaces is $\{0\}$.  Indeed, for $E\in X^{p'}\cap Y^{p'}$ we have $E=\nabla u$ 
    for some $u\in W^{1,p'}(\Omega)$ with 
    $$
      \int_\Omega \eps(x)\nabla u\cdot\nabla\phi\,dx       
      = 0\quad\text{for all }\phi\in W^{1,p}(\Omega).
    $$
    Proposition~\ref{prop:AuschQafI} for $f=0$ then implies $\nabla u=0$, i.e., $E=0$. We thus conclude
    that the sum is direct and $X^{p'}\oplus Y^{p'}\subset L^{p'}(\Omega;\R^3)$. To prove equality we define,
    for any given $f\in L^{p'}(\Omega;\R^3)$, $\Pi f:=f+\nabla u$  where $u$ is the
    solution from Proposition~\ref{prop:AuschQafI}. Then $\Pi$ is a bounded linear operator on
    $L^{p'}(\Omega;\R^3)$ with 
    $$
      \int_\Omega \eps(x)(\Pi f)\cdot\nabla\phi\,dx
      = \int_\Omega \eps(x)(f+\nabla u)\cdot \nabla\phi\,dx  
      =  0 \qquad\text{for all }\phi\in W^{1,p}(\Omega)
    $$
    in view of~\eqref{eq:BVPAuschQafI}.
    We conclude $\Pi:L^{p'}(\Omega;\R^3)\to X^{p'}$, so $\nabla u\in Y^{p'}$ implies 
    $f = \Pi f  - \nabla u \in X^{p'}\oplus Y^{p'}$.  We thus obtain
    $$
      L^{p'}(\Omega;\R^3)=X^{p'}\oplus Y^{p'}.
    $$
    The equality $(Y^p)^{\perp_\eps} = X^{p'}$ holds by definition. To show $Y^p = (X^{p'})^{\perp_\eps}$ first note
    that the inclusion $\subset$ is trivial. So let us assume $g\in (X^{p'})^{\perp_\eps}$, in particular $g\in
    L^p(\Omega;\R^3)$.  Since $f:=\eps(x)^{-1}(\nabla\times\Phi)\in X^{p'}$ whenever $\Phi\in
    C_0^\infty(\Omega;\R^3)$, we get
    \begin{equation} \label{eq:orthogonalityN}
      \int_\Omega (\nabla\times\Phi)\cdot g\,dx 
      = \int_\Omega \eps(x)f\cdot g(x)\,dx
      = 0\qquad\text{for all }\Phi\in C_0^\infty(\Omega;\R^3).
    \end{equation}
    This and Lemma~\ref{lem:curlfree} imply that $g$ must be a gradient, which proves the claim. 
  \end{proof}

  \begin{rem}
    In the case of homogeneous and isotropic permittivities, i.e., $\eps(x)\equiv \eps_0$ with $\eps_0\in
    (0,\infty)$, the  above Helmholtz Decomposition Theorem was proved by 
    Fujiwara-Morimoto in~\cite{FujiwaraMorimoto} for smooth domains and Simader and 
    Sohr~\cite[Theorem~1.4]{SimaderSohr} extended this result to $C^1$-domains. We refer to
    \cite[Theorem~11.1]{FabMedMit_BoundaryLayers},\cite[Theorem~1.3]{GengShen} for alternative proofs and
    variants of this result in Lipschitz domains.
  \end{rem}
  
  In order to use the Fredholm theory of symmetric compact operators we first show the
  following.  
 
 \begin{prop}
   Assume (A1),(A2). Then $\mathcal V$ is a closed subspace of $H^1(\Omega;\R^3)$.
 \end{prop}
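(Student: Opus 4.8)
The plan is to prove the embedding $\mathcal V \hookrightarrow H^1(\Omega;\R^3)$ by establishing a Gaffney-type inequality: for every $E\in C^\infty(\Omega;\R^3)$ that is $\eps$-divergence-free in the sense defining $\mathcal V$, one controls $\|E\|_{H^1}$ by $\|\nabla\times E\|_{L^2} + \|E\|_{L^2}$, i.e.\ by $\|E\|_{\mathcal H}$. Since $\mathcal H$ is by construction the completion of $C^\infty(\Omega;\R^3)$ in the norm $\|\cdot\|_{\mathcal H}$, such an inequality forces $\mathcal V$ (the $\eps$-divergence-free part) to embed continuously into $H^1(\Omega;\R^3)$, and a continuously embedded subspace that is complete in its own (equivalent-to-$H^1$) norm is closed in $H^1$. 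The natural route is: (1) recall that $E\in\mathcal V$ is divergence-free in $\Omega$ and satisfies the distributional Neumann-type condition $\eps(x)E\cdot\nu=0$ on $\partial\Omega$; (2) invoke the known regularity result for such vector fields on $C^1$-domains satisfying an exterior ball condition. Concretely, if $E\in L^2$ has $\curl E\in L^2$, $\diver(\eps E) = 0$, and the boundary condition $\eps E\cdot\nu=0$ holds weakly, then $E\in H^1(\Omega;\R^3)$ with the corresponding estimate; this is exactly the kind of statement for which assumptions (A1), (A2) (including $\eps\in W^{1,3}$) are tailored, as the author hints ("the $W^{1,3}$-regularity for $\eps$ is needed to ensure Sobolev-type embeddings").

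More carefully, I would argue as follows. Given $E\in\mathcal V$, write $\diver E = \diver(\eps^{-1}\eps E)$. Since $\eps E$ is divergence-free and $\eps^{-1}\in W^{1,3}\cap L^\infty$ (as $\eps$ is uniformly positive definite with $\eps\in W^{1,3}$, so is $\eps^{-1}$), one gets $\diver E = \nabla(\eps^{-1})\colon(\eps E) \in L^{?}$; here the $W^{1,3}$ hypothesis and the embedding $H(\curl)\cap H(\diver)\hookrightarrow L^6$ (or an a priori $L^6$ bound coming from the curl control) must be combined to close the estimate. The clean way is a bootstrap: first use that $E\in\mathcal V$ already has some Lebesgue integrability better than $L^2$ via the curl estimate on the domain (exploiting (A1)), and then the product $\nabla(\eps^{-1})\cdot(\eps E)$ lands in $L^2$ by Hölder with exponents $3$ and $6$. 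With $\curl E\in L^2$, $\diver E\in L^2$, and the boundary condition, the standard div-curl elliptic regularity on domains with the exterior ball condition (a Nečas-type or Mitrea-type result) gives $E\in H^1$ together with $\|E\|_{H^1}\lesssim \|\curl E\|_{L^2}+\|\diver E\|_{L^2}+\|E\|_{L^2}\lesssim \|E\|_{\mathcal H}$.

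The main obstacle I anticipate is handling the low regularity of $\eps$ and of $\partial\Omega$ simultaneously: one cannot simply quote the smooth-coefficient, smooth-domain Gaffney inequality. The $W^{1,3}$ threshold is critical in three dimensions (it is the borderline for $W^{1,3}\subset\mathrm{VMO}$, used already in Proposition~\ref{prop:AuschQafI}), so the estimate $\diver E\in L^2$ is delicate and likely requires first knowing $E\in L^6$, which itself should come from the embedding of $\mathcal V$ that the paper promises to establish — so care must be taken that the argument is not circular. The likely resolution, and what I would check in the author's proof, is that the $L^6$-type embedding of $\mathcal V$ is obtained first purely from the curl control plus $\diver(\eps E)=0$ via the Helmholtz decomposition of Proposition~\ref{prop:HelmholtzDecompositionN} (writing $E=\Pi(\text{something})$), and only afterwards is the full $H^1$ regularity and closedness deduced. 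Once the $H^1$ bound $\|E\|_{H^1}\lesssim\|E\|_{\mathcal H}$ is in hand for all $E$ in a dense subset, it passes to all of $\mathcal V$ by completeness, and closedness of $\mathcal V$ in $H^1(\Omega;\R^3)$ follows because a Cauchy sequence in $\mathcal V$ with respect to the $H^1$-norm is, by the reverse trivial inequality $\|E\|_{\mathcal H}\lesssim\|E\|_{H^1}$ (valid since $\curl$ and the identity are bounded $H^1\to L^2$), Cauchy in $\mathcal H$, hence converges in $\mathcal V$, and the $H^1$-limit coincides with the $\mathcal H$-limit.
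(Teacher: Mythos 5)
Your plan is essentially the paper's proof: both observe that every $E\in\mathcal V$ belongs to the class of $L^2$ vector fields with $\nabla\times E\in L^2$, $\nabla\cdot(\eps E)=0$ in $\Omega$ and $(\eps E)\cdot\nu=0$ weakly on $\partial\Omega$, invoke a known Gaffney-type norm equivalence on that class under (A1), (A2) (the paper cites \cite[Theorem~1.1]{FilPro}), and deduce closedness in $H^1(\Omega;\R^3)$ from the two-sided comparison of $\|\cdot\|_{H^1}$ and $\|\cdot\|$ together with the closedness of $\mathcal V$ in $\mathcal H$. The paper does not re-derive the Gaffney estimate, so your speculative sketch of one is not needed; note also that the identity $\diver E = \nabla(\eps^{-1})\colon(\eps E)$ you propose is incorrect for matrix-valued $\eps$, since expanding $\partial_j\big((\eps^{-1})_{jk}(\eps E)_k\big)$ leaves the extra term $(\eps^{-1})_{jk}\partial_j(\eps E)_k$, which does not reduce to $\diver(\eps E)=0$ unless $\eps$ is scalar.
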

 \begin{proof}
   The space $\mathcal V$ is a subset of $F(\Omega,\eps,\nu)$ defined in \cite{FilPro} given that all vector
   fields $E\in\mathcal V$ satisfy $$
     E\in L^2(\Omega;\R^3),\quad \nabla\times E\in L^2(\Omega;\R^3),\quad 
     \nabla\cdot (\eps E)=0 \text{ in }\Omega,\quad 
     (\eps E)\cdot\nu=0 \text{ on }\partial\Omega.
   $$
   By \cite[Theorem~1.1]{FilPro} and (A1),(A2) the norms
   $\|\cdot\|_{H^1(\Omega;\R^3)}$ and $\|\cdot\|$ from~\eqref{eq:innerproduct} are equivalent on
   $F(\Omega,\eps,\nu)$ and in particular on $\mathcal V$, which proves the result.
 \end{proof}


Note that \cite[Theorem~1.1]{FilPro} requires the exterior ball condition for $\Omega$, which is why we
 included it in (A1). As a consequence of the previous proposition, $\mathcal V$ inherits all  embeddings of
 $H^1(\Omega;\R^3)$.
 The Rellich-Kondrachov Theorem as well as the definitions of $\mathcal V,X^p$ imply 
\begin{equation}\label{eq:embeddingsN}
  \mathcal V  \hookrightarrow X^p  
  \quad\text{boundedly for }1\leq p\leq 6
  \text{ and compactly for } 1\leq p<6
\end{equation}
given that the Sobolev-critical exponent for the embeddings of $H^1(\Omega;\R^3)$ into $L^p(\Omega;\R^3)$ is
$p=6$ by our assumption on the domain regularity (A1). The Riesz Representation Theorem in $\mathcal V$
implies that $\mathcal L+1$ has a bounded resolvent from $\mathcal V'$ into $\mathcal V$, which, by
\eqref{eq:embeddingsN}, is compact as an operator from $X^{p'}$ to $X^p$ for $1\leq p<6$ and in particular
from the Hilbert space $X^2$ into itself.
We write 
$$
  \sigma(\mathcal L)
  := \big\{ \lambda\in\R: \mathcal L(E)=\lambda E \text{ for some } E\in\mathcal  V\sm\{0\}\big\}
$$
for the spectrum of $\mathcal L$ and denote by $\Eig_{\lambda}\subset \mathcal V$ the (possibly trivial)
eigenspace of $\mathcal L$ associated with $\lambda\in\R$. In view of \eqref{eq:embeddingsN} we have 
$\Eig_{\lambda}\hookrightarrow X^p$ and, for notational convenience, we write
$\Eig_{\lambda}^p$ whenever this set is considered as a closed subspace of $X^p$ and hence of
$L^p(\Omega;\R^3)$. Fredholm theory for selfadjoint operators in Hilbert
spaces gives the following.

\begin{prop}\label{prop:linear_theoryN}
  Assume (A1),(A2), $1\leq p\leq 6$ and $\lambda\in\R$.
  \begin{itemize}
    \item[(i)] The selfadjoint operator $\mathcal L$ in the Hilbert space $(X^2,\skp{\cdot}{\cdot}_\eps)$ has
    countably many eigenvalues such that the corresponding eigenfunctions form an orthonormal basis of
    $(X^2,\skp{\cdot}{\cdot}_\eps)$ and of $(\mathcal V,\skp{\cdot}{\cdot})$. All of these eigenvalues are
    positive and they tend to $+\infty$.
    \item[(ii)] The linear problem \eqref{eq:LinBVPN} with $g\in X^{p'}$ has a weak solution 
    if and only if $g \perp_\eps \Eig_{\lambda}^p$. The solution is unique up to elements of
    $\Eig_{\lambda}$.
    \item[(iii)] For all $\lambda\in \R\sm \sigma(\mathcal L)$ the resolvent 
    $(\mathcal L-\lambda)^{-1}:X^{p'}\to \mathcal V$ is bounded for all $p\leq 6$ and
    it is compact provided that $p<6$. The analogous statement is true for 
    $\lambda\in \sigma(\mathcal L)$ if $X^{p'},\mathcal V$ are replaced by the subspaces 
    $
      \{g\in X^{p'}: g\perp_\eps \Eig_\lambda^p\}$ and 
      $\{E\in \mathcal V: E\perp  \Eig_\lambda \}$.
  \end{itemize}
\end{prop}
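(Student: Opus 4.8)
The plan is to deduce Proposition~\ref{prop:linear_theoryN} from the fact, already established above, that $(\mathcal L+1)^{-1}$ is a bounded selfadjoint operator on the Hilbert space $(X^2,\skp{\cdot}{\cdot}_\eps)$ which, thanks to the embeddings \eqref{eq:embeddingsN}, is compact as an operator $X^2\to X^2$ and more generally is bounded $X^{p'}\to\mathcal V$ and compact $X^{p'}\to X^p$ for $p<6$. First I would fix the selfadjointness claim: for $E,F\in\mathcal V\cap H(\curl;\Omega)$ the Neumann boundary condition makes the boundary term in the integration by parts vanish, so $\skp{\mathcal L E}{F}_\eps=\int_\Omega\mu^{-1}(\nabla\times E)\cdot(\nabla\times F)\,dx=\skp{E}{\mathcal L F}_\eps$, and positivity is immediate since this equals $\skp{E}{E}_\eps$-free quantity $\int_\Omega\mu^{-1}|\nabla\times E|^2\,dx$, which is $\geq 0$ and in fact $>0$ for $E\neq0$ because $\nabla\times E=0$ together with $E\in\mathcal V$ would force $E$ to be a gradient in $\mathcal V$, hence $0$ by Proposition~\ref{prop:HelmholtzDecompositionN} (or directly by Proposition~\ref{prop:AuschQafI}). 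For (i), apply the spectral theorem for the compact selfadjoint operator $T:=(\mathcal L+1)^{-1}$ on $X^2$: it has an orthonormal eigenbasis with eigenvalues $\mu_n\to0$, $\mu_n>0$ (injectivity of $T$ and positivity of $\mathcal L+1$), and then $\lambda_n:=\mu_n^{-1}-1$ are the eigenvalues of $\mathcal L$, positive by the strict positivity just noted, tending to $+\infty$; that the same functions form an orthonormal basis of $(\mathcal V,\skp{\cdot}{\cdot})$ follows because $\skp{E}{F}=\skp{(\mathcal L+1)E}{F}_\eps$ on eigenfunctions, so the $\mathcal V$-inner products are $(\lambda_n+1)$ times Kronecker deltas and one rescales.

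For (ii) I would argue by the Fredholm alternative applied to $\mathcal L-\lambda = (\mathcal L+1) - (\lambda+1)$: writing $g\in X^{p'}\subset X^2$, the equation $(\mathcal L-\lambda)E=g$ in the weak sense \eqref{eq:weaksolutionI} is equivalent, after applying $T$, to $(I-(\lambda+1)T)E = Tg$ in $X^2$. Since $T$ is compact and selfadjoint, $I-(\lambda+1)T$ is Fredholm of index zero with kernel $\Eig_\lambda$ (for $\lambda\in\sigma(\mathcal L)$; trivial otherwise), so solvability in $X^2$ holds iff $Tg\perp_\eps\Eig_\lambda$, equivalently $g\perp_\eps\Eig_\lambda$ (using selfadjointness of $T$ and $T(\Eig_\lambda)=\Eig_\lambda$), with the solution unique up to $\Eig_\lambda$. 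It then remains to check the regularity upgrade: a priori $E\in X^2$, but $E = T\big(g + (\lambda+1)E\big)$ with $g\in X^{p'}$ and $E\in X^2\hookrightarrow X^{\min(2,p')}$; a bootstrap using the mapping properties of $T$ (bounded $X^{q'}\to\mathcal V\hookrightarrow X^{q}$ for $q\leq6$, improving integrability at each step via the Sobolev embedding of $\mathcal V$) pushes $E$ into $\mathcal V$. Finally, for (iii): on the relevant subspaces $I-(\lambda+1)T$ is invertible, its inverse is bounded on $X^2$, and composing with $T$ (bounded $X^{p'}\to\mathcal V$, compact $X^{p'}\to X^p$ for $p<6$) yields the stated boundedness and compactness of $(\mathcal L-\lambda)^{-1}$; one writes $(\mathcal L-\lambda)^{-1} = T\circ(I-(\lambda+1)T)^{-1}$ on the orthogonal complement of $\Eig_\lambda$ and notes that restricted resolvent maps into $\{E\in\mathcal V: E\perp\Eig_\lambda\}$ because the construction stays orthogonal.

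The main obstacle I expect is the regularity bootstrap in (ii): one must ensure that a weak $X^2$-solution of \eqref{eq:weaksolutionI} is genuinely an element of $\mathcal V$ with the correct integrability, and that the test-function class in \eqref{eq:weaksolutionI} (all $\Phi\in\mathcal V$) matches the operator identity $(I-(\lambda+1)T)E=Tg$ — this requires carefully identifying the weak form with the fixed-point equation in $X^2$, in particular checking that the Helmholtz decomposition lets one ignore the curl-free directions and that $Tg$ is well-defined for $g\in X^{p'}$ with $p'$ possibly below $2$ when $p<2$; here one uses $\Omega$ bounded so $X^{p'}\hookrightarrow X^2$ only when $p'\geq2$, i.e.\ $p\leq2$, and for $p>2$ one has instead $X^{p'}\subset X^2$ automatically. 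Everything else is a routine assembly of the spectral theorem, the Fredholm alternative, and the compact embeddings \eqref{eq:embeddingsN}.
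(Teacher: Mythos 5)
Your proposal follows the same route the paper itself calls ``standard'' — compact resolvent, Hilbert--Schmidt spectral theorem, Fredholm alternative — and your positivity argument (that $\nabla\times E=0$ together with $E\in\mathcal V$ forces $E=0$) is the same observation the paper makes via Lemma~\ref{lem:curlfree}. The Fredholm and compactness discussion of parts (ii),(iii) is a routine filling-in of what the paper leaves implicit, and the passage from $X^2$-orthogonality to $\mathcal V$-orthogonality via $\skp{E}{F}=(\lambda+1)\skp{E}{F}_\eps$ on eigenfunctions is correct.

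The one genuine gap is the claim that the eigenvalues tend to $+\infty$. You deduce this from $\mu_n\to 0$ for the eigenvalues of $T=(\mathcal L+1)^{-1}$, but that presupposes $T$ has infinitely many eigenvalues, i.e.\ that $X^2$ (equivalently $\mathcal V$) is infinite-dimensional, and you never establish this. The paper addresses exactly this point: it exhibits divergence-free test fields $E_\tau$ with shrinking support whose Rayleigh quotients blow up as $\tau\to\infty$, which at once proves infinite-dimensionality and, via min--max, produces eigenvalues of arbitrary size. Without some such input, the spectral theorem alone does not rule out a finite spectrum, and ``tending to $+\infty$'' does not follow. (A small slip in your side remark: for $p>2$, i.e.\ $p'<2$, a bounded domain gives $X^2\subset X^{p'}$, not the reverse inclusion you wrote; this is harmless because $Tg\in\mathcal V\subset X^2$ for every $g\in X^{p'}$ with $p'\geq 6/5$, so the Fredholm alternative in $X^2$ applied to $(I-(\lambda+1)T)E=Tg$ still goes through, and the orthogonality condition $g\perp_\eps\Eig_\lambda^p$ is a well-defined $L^{p'}$--$L^p$ pairing.)
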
 
\begin{proof}
  The claims are standard except for the positivity and the unboundedness of the eigenvalues. 
  The eigenvalues are unbounded from above given that the corresponding Rayleigh quotients  are unbounded
  from above over $\mathcal V$. To see this one may choose nontrivial divergence-free test functions
  with shrinking support, e.g., $E_\tau(x) := \chi(\tau|x-y|^2) (y_2-x_2,x_1-x_1,0)$ for suitable $\chi\in
  C_0^\infty(\R), y\in\Omega$ and $\tau\nearrow \infty$. Moreover, every eigenpair $(E,\lambda)\in
  \mathcal V\times\R$ of $\mathcal L$ satisfies, in view of (A2),
  $$ 
    \lambda \|E\|_{\eps}^2 
       = \lambda\int_\Omega \eps(x)E\cdot E\,dx 
       = \int_\Omega \mu(x)^{-1}(\nabla\times E)\cdot (\nabla\times E)\,dx
       \gtrsim   \int_\Omega |\nabla\times E|^2\,dx. 
   $$
  As a consequence, eigenvalues $\lambda$ satisfy $\lambda\geq 0$ and $\lambda=0$ if and
  only if the associated eigenfunction satisfies $\nabla \times E=0$. Given that $E\in\mathcal V$, the latter
  is equivalent to $E=0$ because Lemma~\ref{lem:curlfree} implies $E=\nabla u$ for some $u\in H^1(\Omega)$ and thus, by
  definition of $\mathcal V$, $\|\nabla u\|_\eps^2 = \skp{E}{\nabla u}_\eps=0$.
  As a consequence, $\lambda$ must be positive. 
\end{proof}

%
  
  Now we extend the considerations to the linear problem~\eqref{eq:LinBVPN} for right hand sides
  $g\in X^{p'}\oplus Y^q$ for suitable $q\in [1,\infty]$. The right ansatz for the solution space is  
  $\mathcal V \oplus Y^q$.  A weak solution $E\in \mathcal V\oplus Y^q$ of \eqref{eq:LinBVPN}, 
  with $E=E_1+E_2,E_1\in\mathcal V,E_2\in Y^q$, is supposed to satisfy
  \begin{equation*}
    \int_\Omega   \mu(x)^{-1}(\nabla\times E_1)\cdot(\nabla\times \Phi_1)\,dx
    -  \lambda\int_\Omega  \eps(x)E\cdot\Phi\,dx
    = \int_\Omega \eps(x)g\cdot\Phi\,dx 
  \qquad\text{for all }\Phi\in\mathcal V\oplus Y^q.
  \end{equation*}
  This extends the notion of a weak solution in $\mathcal V$ given by~\eqref{eq:weaksolutionI}.
  To have well-defined integrals we assume\footnote{Defining the integral $$
    \int_\Omega \eps(x)g\cdot \Phi\,dx
    :=  \int_\Omega \eps(x)g_1\cdot \Phi_1\,dx + \int_\Omega \eps(x)g_2\cdot \Phi_2\,dx
  $$
  by formal orthogonality with respect to $\skp{\cdot}{\cdot}_\eps$ we can even relax the assumption $q\geq
  \max\{2,p'\}$ to $q\geq 2$. Indeed, $g_1\in X^{p'},\Phi_1\in\mathcal V\subset X^p, 1\leq p\leq 6$ justifies
  the first integral and $g_2,\Phi_2\in Y^q, q\geq 2$ justifies the second one.  
  However, given that our applications only concern the range $2<p<6$, this extension is not needed in this
  paper.}  
  $1\leq p\leq 6$ as before as well as $q\geq \max\{2,p'\}$. 
   By orthogonality, this is equivalent to
  \begin{align*}
    \int_\Omega   \mu(x)^{-1}(\nabla\times E_1)\cdot(\nabla\times \Phi_1)\,dx
    -  \lambda\int_\Omega  \eps(x)E_1\cdot\Phi_1\,dx
    &= \int_\Omega \eps(x)g_1\cdot\Phi_1\,dx 
  \qquad\text{for all }\Phi_1\in\mathcal V, \\
    -  \lambda\int_\Omega  \eps(x)E_2\cdot\Phi_2\,dx
    &= \int_\Omega \eps(x)g_2\cdot\Phi_2\,dx
  \qquad\text{for all }\Phi_2\in Y^q.
  \end{align*}
  From Proposition~\ref{prop:linear_theoryN} we get the solution theory for the first equation and the
  solution of the second equation is trivial by our choice of the solution space. We obtain  the
  following.
  
  \begin{thm}\label{thm:linear_theoryN}
    Assume (A1),(A2) as well as $g=g_1+g_2$ where $g_1\in X^{p'}, g_2\in Y^q$ for exponents 
    $1\leq p\leq 6, \max\{2,p'\}\leq q\leq \infty$ and $\lambda\in\R$.     
    \begin{itemize}
      \item[(i)] If $\lambda\in\R\sm (\sigma(\mathcal L)\cup\{0\})$ then the unique weak solution
      $E\in \mathcal V\oplus Y^q$ of~\eqref{eq:LinBVPN} is given by
      $$
        E= (\mathcal L-\lambda)^{-1}g_1 + \lambda^{-1}g_2.
      $$
      \item[(ii)] If $\lambda\in \sigma(\mathcal L)$ then \eqref{eq:LinBVPN} admits weak solutions if and only
      if $g_1\perp_\eps \Eig^p_{\lambda}$. In this case all weak solutions   
      $E\in \mathcal V\oplus Y^q$ are given by
      $$
        E\in (\mathcal L-\lambda)^{-1}g_1 + \lambda^{-1}g_2 + \Eig_\lambda.
      $$
      \item[(iii)]  If $\lambda=0$ then \eqref{eq:LinBVPN} admits weak solutions if and
      only if $g_2=0$. In this case all weak solutions   
      $E\in \mathcal V\oplus Y^q$ are given by
      $$
        E \in \mathcal L^{-1}g_1  + Y^q.
      $$
    \end{itemize}
  \end{thm}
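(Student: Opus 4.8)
The plan is to reduce Theorem~\ref{thm:linear_theoryN} to the already-established linear theory on $\mathcal V$ (Proposition~\ref{prop:linear_theoryN}) by exploiting the $\eps$-orthogonal splitting $E=E_1+E_2$, $g=g_1+g_2$ with $E_1,g_1$ in the ``curl part'' and $E_2,g_2$ in the ``gradient part''. The starting observation, already recorded just before the statement, is that a weak solution $E\in\mathcal V\oplus Y^q$ of \eqref{eq:LinBVPN} is equivalent to the decoupled pair of equations: the $\mathcal V$-equation tested against $\Phi_1\in\mathcal V$, and the trivial equation $-\lambda\skp{E_2}{\Phi_2}_\eps=\skp{g_2}{\Phi_2}_\eps$ for all $\Phi_2\in Y^q$. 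The decoupling itself uses the formal orthogonality of $\mathcal V$ and $Y^q$ with respect to $\skp{\cdot}{\cdot}_\eps$ together with the fact that $\nabla\times\Phi_1=0$ is false in general but $\nabla\times\Phi_2=0$ holds on the gradient part, so the curl-curl term only sees $E_1$ and $\Phi_1$; this is exactly the computation in the indented display preceding the theorem, so I would simply cite it.

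Next I would treat the three cases. For (i), $\lambda\notin\sigma(\mathcal L)\cup\{0\}$: Proposition~\ref{prop:linear_theoryN}(iii) gives a unique $E_1=(\mathcal L-\lambda)^{-1}g_1\in\mathcal V$ solving the first equation (here one needs $g_1\in X^{p'}$, which is part of the hypothesis, and $p\leq 6$), while the second equation forces $E_2=-\lambda^{-1}g_2$, which lies in $Y^q$ since $g_2\in Y^q$ and $\lambda\neq 0$; uniqueness of each piece gives uniqueness of $E$. Summing yields $E=(\mathcal L-\lambda)^{-1}g_1+\lambda^{-1}g_2$ — note the sign: testing gives $-\lambda\skp{E_2}{\Phi_2}_\eps=\skp{g_2}{\Phi_2}_\eps$ hence $E_2=-\lambda^{-1}g_2$, so I should double-check whether the stated $\lambda^{-1}g_2$ is a typo or reflects a sign convention in \eqref{eq:LinBVPN}; reading \eqref{eq:LinBVPN} as $\nabla\times(\mu^{-1}\nabla\times E)-\lambda\eps E=\eps g$ the weak form is $\skp{E}{\Phi}_{\mathrm{curl}}-\lambda\skp{E}{\Phi}_\eps=\skp{g}{\Phi}_\eps$, so on the gradient part $-\lambda\skp{E_2}{\Phi_2}_\eps=\skp{g_2}{\Phi_2}_\eps$ and indeed $E_2=-\lambda^{-1}g_2$ — I would write $-\lambda^{-1}g_2$ in (i) and (ii) accordingly, or flag it. For (ii), $\lambda\in\sigma(\mathcal L)$ (so in particular $\lambda>0$ by Proposition~\ref{prop:linear_theoryN}(i), hence $\lambda\neq 0$): the first equation is solvable iff $g_1\perp_\eps\Eig^p_\lambda$ by Proposition~\ref{prop:linear_theoryN}(ii), with solution set $(\mathcal L-\lambda)^{-1}g_1+\Eig_\lambda$; the second equation still has the unique solution $E_2=-\lambda^{-1}g_2\in Y^q$; combining gives the stated affine space. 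For (iii), $\lambda=0$: the gradient equation becomes $0=\skp{g_2}{\Phi_2}_\eps$ for all $\Phi_2\in Y^q$, which by nondegeneracy of $\skp{\cdot}{\cdot}_\eps$ on $Y^q\subset L^2$ (using $q\geq 2$ and uniform positive-definiteness of $\eps$ from (A2)) forces $g_2=0$; the $\mathcal V$-equation is $\mathcal L E_1=g_1$, solvable for every $g_1\in X^{p'}$ since $0\notin\sigma(\mathcal L)$, with solution $\mathcal L^{-1}g_1$ unique in $\mathcal V$; but $E_2\in Y^q$ is now completely unconstrained, giving the solution set $\mathcal L^{-1}g_1+Y^q$.

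The only genuinely non-routine point is making sure the integrals in the weak formulation are well defined, i.e., justifying the pairing $\skp{g}{\Phi}_\eps$ for $g\in X^{p'}\oplus Y^q$ and $\Phi\in\mathcal V\oplus Y^q$; this is where the hypothesis $1\leq p\leq 6$ and $q\geq\max\{2,p'\}$ enters, via the embedding $\mathcal V\hookrightarrow X^p$ from \eqref{eq:embeddingsN} (so $\skp{g_1}{\Phi_1}_\eps$ makes sense by Hölder with exponents $p',p$) and Hölder with exponents $q',q$ on $L^2$-controlled gradient parts — and this is exactly the content of the footnote attached to the statement, so I would just refer to it. Everything else is bookkeeping: verify that the proposed $E$ indeed lies in $\mathcal V\oplus Y^q$, that it satisfies the decoupled system, and that any solution has this form by uniqueness of the Helmholtz components (Proposition~\ref{prop:HelmholtzDecompositionN}). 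I do not expect a serious obstacle; the main care is the sign in $\pm\lambda^{-1}g_2$ and the case $\lambda=0$ where uniqueness fails precisely because the operator annihilates all of $Y^q$.
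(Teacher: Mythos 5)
Your proof is correct and follows the same route the paper takes: the paper itself reduces the statement to the decoupled pair of equations displayed just before the theorem (testing separately against $\Phi_1\in\mathcal V$ and $\Phi_2\in Y^q$), invokes Proposition~\ref{prop:linear_theoryN} for the $\mathcal V$-component, and observes that the $Y^q$-component is solved trivially; no further argument is given there, so your bookkeeping fills in exactly what is left implicit. Your sign concern is well founded: the weak form indeed yields $E_2=-\lambda^{-1}g_2$, so the formulas in (i) and (ii) should read $E=(\mathcal L-\lambda)^{-1}g_1-\lambda^{-1}g_2$ (respectively $E\in(\mathcal L-\lambda)^{-1}g_1-\lambda^{-1}g_2+\Eig_\lambda$); this is consistent with how the theorem is actually used later in the paper, e.g.\ in the proof of Lemma~\ref{lem:equivalenceI} where $E_2=-\omega^{-2}P_2$ appears, and in \eqref{eq:dualequationN}, so the ``$+$'' in the theorem statement is a typo rather than a different convention.
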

  
  Later, in the discussion of the nonlinear Neumann problem for $\lambda=\omega^2\in\sigma(\mathcal L)$, it
  will turn out convenient to choose $g_1$ as in (ii). To this end, we introduce for $1\leq p\leq 6$ and
  $\lambda\in\R$
  \begin{align*}
    X^{p'}_{\lambda}
    :=\Big\{ f \in X^{p'}: \int_\Omega \eps(x)f\cdot\phi\,dx = 0 \text{ for all 
    }\phi\in \Eig_{\lambda}^p \Big\}.  
  \end{align*} 
  This definition makes sense for  $1\leq p\leq 6$ and $\lambda\in\R$. We then have 
  $g_1\in X^{p'}, g_1\perp_\eps \Eig^p_{\lambda}$ if and only if $g_1\in X^{p'}_{\lambda}$. 
  This is why the function space $X^{p'}_{\lambda}$ and its properties will be needed later on.   
   
  \begin{prop}\label{prop:Xlambda}
    Assume $\frac{6}{5}\leq p\leq 6$ and $\lambda\in\R$. Then $X^{p'}  = X^{p'}_{\lambda} \oplus
    \Eig_{\lambda}^{p'}$ with 
    $$
      (X^{p'}_{\lambda})^{\perp_\eps} =\Eig_{\lambda}^p\oplus Y^p, \qquad
      (\Eig_{\lambda}^{p'}\oplus Y^{p'})^{\perp_\eps} =   X^p_{\lambda}.
    $$  
  \end{prop}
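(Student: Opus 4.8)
The plan is to reduce everything to the $L^2$-decomposition already encoded in Proposition~\ref{prop:HelmholtzDecompositionN} and the Fredholm spectral theory of Proposition~\ref{prop:linear_theoryN}. First I would recall that by Proposition~\ref{prop:HelmholtzDecompositionN} we have $L^{p'}(\Omega;\R^3)=X^{p'}\oplus Y^{p'}$, so it suffices to split $X^{p'}$ itself. Since $\lambda$ may or may not lie in $\sigma(\mathcal L)$, there are two cases; the statement is only interesting when $\lambda\in\sigma(\mathcal L)$, for otherwise $\Eig_\lambda=\{0\}$, $X^{p'}_\lambda=X^{p'}$ and the claim is trivial given Proposition~\ref{prop:HelmholtzDecompositionN}. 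So assume $\lambda\in\sigma(\mathcal L)$ and let $\Eig_\lambda\subset\mathcal V$ be the (finite-dimensional) eigenspace. The constraint $\frac65\le p\le 6$ is exactly what makes both $p$ and $p'$ lie in $[\frac65,6]\subset[1,6]$, so by \eqref{eq:embeddingsN} the eigenfunctions, which live in $\mathcal V$, embed into both $L^p$ and $L^{p'}$; hence $\Eig_\lambda^{p'}$ and $\Eig_\lambda^p$ are well-defined finite-dimensional subspaces and the pairing $\skp{\cdot}{\cdot}_\eps$ between $\Eig_\lambda^{p'}$ and $\Eig_\lambda^p$ is a nondegenerate bilinear form (indeed it restricts to an inner product on a fixed finite-dimensional space, by (A2)).

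Next I would construct the projection. Fix an $\skp{\cdot}{\cdot}_\eps$-orthonormal basis $\phi_1,\dots,\phi_N$ of $\Eig_\lambda$; these are fixed smooth-enough fields, so $f\mapsto \sum_{j}\skp{f}{\phi_j}_\eps\phi_j$ is a bounded linear projection $Q$ on $L^{p'}(\Omega;\R^3)$ onto $\Eig_\lambda^{p'}$ (boundedness uses Hölder and the membership $\phi_j\in L^p$, valid since $p\le 6$). For $f\in X^{p'}$ one has $Qf\in\Eig_\lambda^{p'}\subset\mathcal V\subset X^{p'}$, hence $f-Qf\in X^{p'}$, and moreover $f-Qf$ is $\eps$-orthogonal to every $\phi_j$, i.e. $f-Qf\in X^{p'}_\lambda$. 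This gives $X^{p'}=X^{p'}_\lambda+\Eig_\lambda^{p'}$; directness follows because $X^{p'}_\lambda\cap\Eig_\lambda^{p'}$ consists of eigenfunctions $\eps$-orthogonal to themselves, hence is $\{0\}$ by (A2). That settles $X^{p'}=X^{p'}_\lambda\oplus\Eig_\lambda^{p'}$.

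For the two orthogonality identities I would argue as follows. The inclusion $\Eig_\lambda^p\oplus Y^p\subset (X^{p'}_\lambda)^{\perp_\eps}$ is immediate from the definitions (elements of $X^{p'}_\lambda$ are by definition $\eps$-orthogonal to $\Eig_\lambda^p$, and they lie in $X^{p'}$, hence are $\eps$-orthogonal to $Y^p$ by Proposition~\ref{prop:HelmholtzDecompositionN}). For the reverse inclusion, take $g\in L^p(\Omega;\R^3)$ with $g\perp_\eps X^{p'}_\lambda$; decompose $g=g_X+g_Y$ with $g_X\in X^p$, $g_Y\in Y^p$ via Proposition~\ref{prop:HelmholtzDecompositionN} (applied with the roles of $p,p'$ exchanged, legitimate since $p\in[\frac65,6]$ means $p'\in(1,\infty)$). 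Since $g_Y$ is already $\eps$-orthogonal to all of $X^{p'}\supset X^{p'}_\lambda$, we get $g_X\perp_\eps X^{p'}_\lambda$; now write $g_X=(g_X)_\lambda + Q'g_X$ along the direct sum $X^p=X^p_\lambda\oplus\Eig_\lambda^p$ just established (with $p,p'$ swapped), test against $(g_X)_\lambda\in X^p_\lambda\subset X^{p'}_\lambda$ — here I use $p\le p'$ or rather that $X^p_\lambda\subset X^{p'}_\lambda$ fails in general, so instead I pair directly: $(g_X)_\lambda\in X^p_\lambda$, and I want to show it is zero. The cleanest route is: by the $p\leftrightarrow p'$ version of the decomposition, $X^p=X^p_\lambda\oplus\Eig_\lambda^p$ and the $\eps$-pairing identifies $(X^p_\lambda)'$-type complements; since $g_X\perp_\eps X^{p'}_\lambda$ and $X^p_\lambda\subset X^{p'}_\lambda$ need not hold, I instead show that the finite-dimensional $\skp{\cdot}{\cdot}_\eps$-pairing between $X^p_\lambda$ and $X^{p'}_\lambda$ is "large enough" — concretely, $X^p_\lambda$ and $X^{p'}_\lambda$ are both $\eps$-orthogonal complements of $\Eig_\lambda$ inside $X^p$ resp. $X^{p'}$, and a density/duality argument (test functions $C_0^\infty$ are dense and lie in every $X^q_\lambda$ after projecting off $\Eig_\lambda$ and $Y$) forces $(g_X)_\lambda=0$, hence $g_X\in\Eig_\lambda^p$ and $g\in\Eig_\lambda^p\oplus Y^p$.

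The main obstacle, and the step I would be most careful about, is precisely this last duality argument: one must show that $\eps$-orthogonality to the "small" space $X^{p'}_\lambda$ still annihilates the non-$\Eig_\lambda$ part of an $L^p$ field, even though $X^{p'}_\lambda\not\subset X^p_\lambda$ when $p\ne 2$. The fix is to exploit that $C_0^\infty(\Omega;\R^3)\cap X^q_\lambda$ is dense in $X^q_\lambda$ for every $q\in(1,\infty)$ — obtained by taking $\Phi\in C_0^\infty$, subtracting its $Y$-part (Proposition~\ref{prop:AuschQafI}, which keeps it smooth-in-$L^q$) and its $\Eig_\lambda$-part — so that testing against such common test functions, which lie in $X^p_\lambda\cap X^{p'}_\lambda$, suffices to conclude $(g_X)_\lambda=0$ by density in $X^p_\lambda$. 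The second identity $(\Eig_\lambda^{p'}\oplus Y^{p'})^{\perp_\eps}=X^p_\lambda$ then follows by the same scheme with $p$ and $p'$ interchanged, or by taking $\eps$-orthogonal complements in the already-established direct-sum decompositions together with the non-degeneracy of the finite-dimensional $\eps$-pairing on $\Eig_\lambda$.
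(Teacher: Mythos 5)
Your plan for the first part, $X^{p'}=X^{p'}_\lambda\oplus\Eig_\lambda^{p'}$, is correct and matches the paper's construction (the explicit projection $Q$), as does the easy inclusion $\Eig_\lambda^p\oplus Y^p\subset(X^{p'}_\lambda)^{\perp_\eps}$. The gap is exactly where you flag it: the reduction to showing $(g_X)_\lambda=0$ and the proposed density argument. Two problems there. First, the density claim is incoherent as stated: if you take $\Phi\in C_0^\infty(\Omega;\R^3)$ and subtract its $Y$-part (the gradient $\nabla u$ from Proposition~\ref{prop:AuschQafI}) and its $\Eig_\lambda$-part, the result lies in $X^q_\lambda$ for every $q$, but it is \emph{not} in $C_0^\infty(\Omega;\R^3)$, so you have not produced elements of $C_0^\infty(\Omega;\R^3)\cap X^q_\lambda$, and you give no argument that this intersection is even nonempty, let alone dense. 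Second, even a correct density statement would still leave you pairing $(g_X)_\lambda\in X^p_\lambda$ against vectors in $X^{p'}_\lambda$; to conclude $(g_X)_\lambda=0$ you need this $\eps$-pairing to be nondegenerate on the $X^p_\lambda$-side, which is essentially the content of the proposition itself, so you would have to spell out a noncircular argument for that.

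The paper sidesteps the whole issue by \emph{not} applying the Helmholtz decomposition to $g$ first. Given $f\in(X^{p'}_\lambda)^{\perp_\eps}$, one simply defines
$$
\Pi(f):=f-\sum_{i=1}^n\skp{f}{\phi_i}_\eps\,\phi_i
$$
with $\{\phi_i\}$ an $\skp{\cdot}{\cdot}_\eps$-ONB of $\Eig_\lambda$, and shows directly that $\Pi(f)\perp_\eps X^{p'}$. Indeed, for $h\in X^{p'}$ write $h=h_\lambda+h_E$ along $X^{p'}=X^{p'}_\lambda\oplus\Eig_\lambda^{p'}$; then
$$
\skp{\Pi(f)}{h}_\eps
=\underbrace{\skp{f}{h_\lambda}_\eps}_{=0}
+\skp{f}{h_E}_\eps
-\sum_i\skp{f}{\phi_i}_\eps\underbrace{\skp{\phi_i}{h_\lambda}_\eps}_{=0}
-\sum_i\skp{f}{\phi_i}_\eps\skp{\phi_i}{h_E}_\eps
=0,
$$
because the second and fourth terms cancel by expanding $h_E$ in the ONB. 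Now Proposition~\ref{prop:HelmholtzDecompositionN} gives $\Pi(f)\in(X^{p'})^{\perp_\eps}=Y^p$, hence $f\in\Eig_\lambda^p\oplus Y^p$. This uses only the decomposition you already established and avoids any $X^p_\lambda$ versus $X^{p'}_\lambda$ duality. The point you missed is that the correct object to project off is $\Eig_\lambda$ alone, and that the correct target for the orthogonality claim is all of $X^{p'}$ (whose $\eps$-annihilator is already known), not the smaller space $X^{p'}_\lambda$.
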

  \begin{proof}
    We only prove $(X^{p'}_{\lambda})^{\perp_\eps} \subset \Eig_{\lambda}^p\oplus Y^p$, so let $f\in
    (X^{p'}_{\lambda})^{\perp_\eps} \subset L^p(\Omega;\R^3)$ and define  
    $$
      \Pi(f) := f - \sum_{i=1}^n  \skp{f}{\phi_i}_\eps \phi_i \in L^p(\Omega;\R^3)
      \qquad\text{where }\{\phi_1,\ldots,\phi_n\} \text{ is an ONB of
      }(\Eig_{\lambda}^p,\skp{\cdot}{\cdot}_\eps).  
    $$
 	With this definition we find $\Pi(f) \in (X^{p'})^{\perp_\eps} =  Y^p$ by
 	Proposition~\ref{prop:HelmholtzDecompositionN}. This implies $f\in \Eig_{\lambda}^p\oplus Y^p$.
  \end{proof}

  \section{Proof of Theorem~\ref{thm:N}} \label{sec:TheoremN}
  
  The nonlinear Neumann problem~\eqref{eq:NLCurlCurlN} reads  
  $$ 
    (\mathcal L-\omega^2)E = P \qquad\text{where } P:=\eps(x)^{-1}f(x,E) \text{ and }E\in\mathcal
    V\oplus\mathcal W. 
  $$
  Aiming for a dual formulation of this problem, we solve the linear problem and treat the resulting equation
  as a problem for the vector field $P$.  The inversion of  the map $E\mapsto P$ is possible thanks to our
  assumption on the nonlinearity $f$. In the Appendix (Proposition~\ref{prop:Apsi}) we show that (A3)
  implies that an inverse $\psi(x,\cdot):=f(x,\cdot)^{-1}$ exists almost everywhere with the
  following properties:
   \begin{itemize}
  \item[(A3')]  $\psi:\Omega\times\R^3\to\R^3$ is measurable with $\psi(x,P)=\psi_0(x,|P|)|P|^{-1}P$ where,
  for almost all $x\in\Omega$,
  \begin{align*}
    z&\mapsto \psi_0(x,z)  \text{ is positive, increasing and differentiable on }(0,\infty), \\
    z&\mapsto z^{-1}\psi_0(x,z) \text{ is decreasing on }(0,\infty).
  \end{align*}
  Moreover, there are $c_1,c_2>0$ and $2<p<6$ such that for almost all $x\in\Omega$ and $z>0$ we have
  \begin{align} \label{eq:psi_estimate}
    \int_0^z \psi_0(x,s)\,ds -\frac{1}{2} \psi_0(x,z)z
    \geq c_1|z|^{p'}
    \geq c_2 \psi_0(x,z) z
  \end{align}
\end{itemize}
 We anticipate Proposition~\ref{prop:Apsi} for the sake of the presentation.
  
  \begin{prop}\label{prop:psi}
    Assume that $f:\Omega\times\R^3\to\R^3$ satisfies~(A3). Then
    $\psi(x,\cdot):=f(x,\cdot)^{-1}$ exists for almost all $x\in\Omega$ and satisfies (A3').
  \end{prop}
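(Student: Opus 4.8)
The plan is to reduce everything to the scalar radial profiles, since the assumption (A3) guarantees that $f(x,\cdot)$ acts radially: $f(x,E)=f_0(x,|E|)|E|^{-1}E$. First I would fix an $x$ outside the exceptional null set and study the scalar map $s\mapsto f_0(x,s)$ on $(0,\infty)$. By (A3) this map is positive, differentiable and increasing, and moreover $s\mapsto s^{-1}f_0(x,s)$ is increasing; combining these we get $f_0(x,s)\to 0$ as $s\to 0^+$ (since $f_0(x,s)=s\cdot(s^{-1}f_0(x,s))$ and the second factor is bounded near $0$ by monotonicity) and $f_0(x,s)\to\infty$ as $s\to\infty$ (again from $f_0(x,s)=s\cdot(s^{-1}f_0(x,s))$ with the second factor bounded below). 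Hence $f_0(x,\cdot):(0,\infty)\to(0,\infty)$ is a continuous increasing bijection, so its inverse $\psi_0(x,\cdot):(0,\infty)\to(0,\infty)$ exists, is continuous, increasing, and — since $f_0(x,\cdot)$ is differentiable with a derivative that one checks is strictly positive — differentiable by the inverse function theorem. One then defines $\psi(x,P):=\psi_0(x,|P|)|P|^{-1}P$ for $P\neq 0$ and $\psi(x,0)=0$; a direct computation shows $f(x,\psi(x,P))=P$ and $\psi(x,f(x,E))=E$, so $\psi(x,\cdot)=f(x,\cdot)^{-1}$. Measurability in $x$ follows because $\psi_0$ is obtained from $f_0$ by a pointwise inversion that preserves Carathéodory-type measurability (e.g.\ $\psi_0(x,z)=\sup\{s\geq 0: f_0(x,s)\leq z\}$, a measurable operation).

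Next I would verify the monotonicity claims in (A3'). That $z\mapsto\psi_0(x,z)$ is increasing is immediate from the inverse of an increasing function. For ``$z\mapsto z^{-1}\psi_0(x,z)$ is decreasing'' I would substitute $z=f_0(x,s)$, so that $z^{-1}\psi_0(x,z)=s/f_0(x,s)=(s^{-1}f_0(x,s))^{-1}$; since $s\mapsto s^{-1}f_0(x,s)$ is increasing by (A3) and $s\mapsto f_0(x,s)$ is increasing (so $z$ and $s$ move together), the composition is the reciprocal of an increasing function of $z$, hence decreasing. Positivity and differentiability of $\psi_0(x,\cdot)$ were already established above.

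It remains to prove the growth estimate~\eqref{eq:psi_estimate} from the corresponding estimate in (A3). The key identity, valid because $\psi_0(x,\cdot)$ and $f_0(x,\cdot)$ are mutually inverse increasing bijections of $(0,\infty)$, is the Legendre-type relation
\begin{equation*}
  \int_0^z \psi_0(x,s)\,ds + \int_0^{\psi_0(x,z)} f_0(x,t)\,dt = z\,\psi_0(x,z)\qquad(z>0),
\end{equation*}
which is the standard ``area'' decomposition for the graph of an increasing function. Writing $s_z:=\psi_0(x,z)$, so that $z=f_0(x,s_z)$, this rearranges to
\begin{equation*}
  \int_0^z \psi_0(x,s)\,ds - \tfrac12 z\,\psi_0(x,z)
  = \tfrac12 f_0(x,s_z)\,s_z - \int_0^{s_z} f_0(x,t)\,dt.
\end{equation*}
Now I apply (A3) at $s=s_z$: the middle quantity in (A3), namely $\tfrac12 f_0(x,s)s-\int_0^s f_0(x,t)\,dt$, is $\geq c_1 s^p$ and $\leq (1/c_2)\,f_0(x,s)s$ — wait, (A3) reads $\tfrac12 f_0 s-\int_0^s f_0\geq c_1 s^p\geq c_2 f_0(x,s)s$, so in particular $c_1 s_z^p \geq c_2 f_0(x,s_z)s_z = c_2\, z\, s_z$, i.e.\ $s_z^{p-1}\geq (c_2/c_1)z$, hence $s_z\geq (c_2/c_1)^{1/(p-1)} z^{1/(p-1)} = (c_2/c_1)^{1/(p-1)} z^{p'-1}$ since $1/(p-1)=p'-1$. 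Therefore the left side above is $\geq c_1 s_z^p = c_1 s_z\cdot s_z^{p-1}\geq c_1 (c_2/c_1)^{p/(p-1)} z^{p'-1}\cdot z = \tilde c_1 z^{p'}$, giving the first inequality in~\eqref{eq:psi_estimate}. For the second, $c_2 f_0(x,s_z)s_z = c_2 z s_z$ and from $\tfrac12 f_0 s - \int_0^s f_0\geq c_1 s^p$ together with $f_0(x,s_z)=z$ we also get $s_z^p\geq (1/c_1)(\tfrac12 z s_z - \int_0^{s_z}f_0) $; combined with the already-proven $\int_0^z\psi_0 - \tfrac12 z\psi_0(x,z)\geq c_1 s_z^p$ and $s_z\leq (\text{const})\,z^{p'-1}$ (from the upper bound $c_1 s_z^p\geq c_2 z s_z$ read the other way, i.e.\ $s_z^{p-1}\leq(c_1/c_2)$ times... — here one must be slightly careful), one bounds $c_2\psi_0(x,z)z = c_2 s_z z$ above by a constant times $z^{p'}$, hence by a constant times the left side. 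Renaming constants yields~\eqref{eq:psi_estimate}.

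The main obstacle I expect is this last growth estimate: one has to pass cleanly between the variables $z$ and $s_z=\psi_0(x,z)$ and keep track of which inequality in (A3) controls $s_z$ from above versus below, so that both halves of~\eqref{eq:psi_estimate} come out with the right power $p'$ and with constants uniform in $x$. The monotonicity transfer and the mere existence of $\psi$ are routine; the bookkeeping in the Legendre identity is where care is needed, and it is presumably why the authors relegate it to the Appendix (Proposition~\ref{prop:Apsi}).
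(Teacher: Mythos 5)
Your outline follows the paper's own route almost step by step: existence of the inverse from the limits of $f_0(x,\cdot)$ at $0$ and $\infty$, the transfer of the monotonicity of $s\mapsto s^{-1}f_0(x,s)$ into the monotonicity of $z\mapsto z^{-1}\psi_0(x,z)$ via the substitution $z=f_0(x,s)$, and finally the Legendre ``area'' identity
$$
  \int_0^{f_0(x,s)}\psi_0(x,t)\,dt+\int_0^s f_0(x,t)\,dt = s\,f_0(x,s)
$$
combined with the two-sided asymptotics $f_0(x,s)\asymp s^{p-1}$ to convert the estimate in~(A3) into~\eqref{eq:psi_estimate}. That is exactly what the paper does in Proposition~\ref{prop:Apsi}.

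There is one genuine slip, precisely at the spot you flagged. To get the second inequality $c_1 z^{p'}\geq c_2\psi_0(x,z)z$ you need the \emph{upper} bound $s_z=\psi_0(x,z)\lesssim z^{p'-1}$, and you try to read it off from $c_1 s^p\geq c_2 f_0(x,s)s$. But that inequality gives $f_0(x,s)\leq (c_1/c_2)s^{p-1}$, i.e.\ $z\leq (c_1/c_2)s_z^{p-1}$, which is a \emph{lower} bound on $s_z$, not an upper one. The correct source is the other half of~(A3): since $\tfrac12 f_0(x,s)s-\int_0^s f_0(x,t)\,dt\geq c_1 s^p$ and $\int_0^s f_0\geq 0$, one gets $f_0(x,s)\geq 2c_1 s^{p-1}$, hence $z\geq 2c_1 s_z^{p-1}$ and $s_z\leq (2c_1)^{-1/(p-1)}z^{p'-1}$, which gives $\psi_0(x,z)z\lesssim z^{p'}$. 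The paper compresses both directions into the single line ``$z=f_0(x,s)\sim s^{p-1}$'', which really uses both halves of~(A3): the chain $c_1 s^p\geq c_2 f_0(x,s)s$ gives $f_0(x,s)\lesssim s^{p-1}$ (so $s_z\gtrsim z^{p'-1}$, which you used for the first inequality), while $\tfrac12 f_0 s-\int_0^s f_0\geq c_1 s^p$ gives $f_0(x,s)\gtrsim s^{p-1}$ (so $s_z\lesssim z^{p'-1}$, which you need for the second).
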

  
  As a consequence, solving the nonlinear Neumann problem amounts to solving
  the quasilinear problem 
  \begin{equation} \label{eq:quasilinear}
    (\mathcal L-\omega^2)\big(\psi(x,\eps(x)P)\big)  = P.
  \end{equation}
  The case distinction (i),(ii),(iii) in Theorem~\ref{thm:linear_theoryN} leads to a separate discussion of
  \eqref{eq:quasilinear} according to the following cases:
  $$
    \text{(I)}\quad \omega^2 \in (0,\infty)\sm\sigma(\mathcal L)\qquad\text{or}\qquad    
    \text{(II)}\quad \omega^2 \in\sigma(\mathcal L) \qquad\text{or}\qquad 
    \text{(III)}\quad \omega^2 = 0.
  $$
  Case (I) will be treated in full detail whereas our presentation of the cases (II),(III) focusses on the
  modicifications with respect to (I).
  
  \subsection{\textbf{Case (I)}} \; Here  we assume $\omega^2\in (0,\infty)\sm \sigma(\mathcal L)$.
  The first step is to prove that the original problem  is equivalent to finding ground and bound states 
  of the functional
  \begin{align}
    J(P):=  \int_{\Omega}
    \Psi(x,\eps(x)P) \,dx + \frac{1}{2\omega^2} \int_\Omega P_2\cdot \eps(x) P_2\,dx
    - \frac{1}{2}\int_{\Omega} (\mathcal L-\omega^2)^{-1}P_1\cdot \eps(x)P_1\,dx
    \label{eq:functionalJdualN} 
  \end{align}
  for $P\in Z:= X^{p'}\oplus Y^2$. It is straightforward to check $J\in C^1(Z)$ with Fr\'{e}chet derivative  
  \begin{align*}
      J'(P)[h] = \int_{\Omega}\psi(x,\eps(x)P)\cdot  \eps(x)h\,dx
      +  \omega^{-2} \int_\Omega P_2\cdot \eps(x)h_2\,dx
      - \int_{\Omega} (\mathcal L-\omega^2)^{-1}P_1\cdot \eps(x)h_1\,dx
  \end{align*}
  for all $h\in Z$. Here one uses that $(\mathcal L-\omega^2)^{-1}$ is symmetric with respect to
  $\skp{\cdot}{\cdot}_\eps$. Exploiting the formulas for $(X^{p'})^{\perp_\eps}$ and 
  $(Y^2)^{\perp_\eps}$ from Proposition~\ref{prop:HelmholtzDecompositionN} we find that the Euler-Lagrange
  equation of $J$ reads
  \begin{align}
    \psi(x,\eps(x)P) = (\mathcal L-\omega^2)^{-1}P_1 -\omega^{-2}P_2 
     \qquad\text{for } P_1\in X^{p'},\;P_2\in Y^2. \label{eq:dualequationN} 
  \end{align}
   
\begin{lem} \label{lem:equivalenceI}
  Assume (A1),(A2),(A3) and $\omega^2\in (0,\infty)\sm \sigma(\mathcal L)$. Then  $I'(E)=0,E\in \mathcal
  V\oplus \mathcal W$\; if and only if\; $J'(P)=0,P\in Z$,
  where $P,E$ are related to each other via $P= \eps(x)^{-1}f(x,E)$, $E=\psi(x,\eps(x)P)$
  with $\psi$ as in Proposition~\ref{prop:psi}.
\end{lem}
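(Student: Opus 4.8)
The plan is to show that the two Euler--Lagrange equations $I'(E)=0$ and $J'(P)=0$ are, via the substitution $P=\eps(x)^{-1}f(x,E)$ and $E=\psi(x,\eps(x)P)$, literally the same equation rewritten in the two sets of variables, together with a bookkeeping argument that the Helmholtz components match up. First I would record the critical point equations explicitly. The functional $I$ on $\mathcal V\oplus\mathcal W$ has derivative
\begin{align*}
  I'(E)[\Phi]
  = \int_\Omega \mu(x)^{-1}(\nabla\times E_1)\cdot(\nabla\times\Phi_1)\,dx
  - \omega^2\int_\Omega \eps(x)E\cdot\Phi\,dx
  - \int_\Omega f(x,E)\cdot\Phi\,dx,
\end{align*}
for $\Phi=\Phi_1+\Phi_2$ with $\Phi_1\in\mathcal V$, $\Phi_2\in\mathcal W$, and $I'(E)=0$ means that $E$ is a weak solution of $(\mathcal L-\omega^2)E=P$ with $P:=\eps(x)^{-1}f(x,E)$ in the sense of Theorem~\ref{thm:linear_theoryN}. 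On the other side, $J'(P)=0$ is precisely the dual equation~\eqref{eq:dualequationN}, namely $\psi(x,\eps(x)P)=(\mathcal L-\omega^2)^{-1}P_1-\omega^{-2}P_2$ with $P_1\in X^{p'}$, $P_2\in Y^2$.

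Next I would establish the forward implication. Suppose $I'(E)=0$ with $E=E_1+E_2\in\mathcal V\oplus\mathcal W$, and set $P:=\eps(x)^{-1}f(x,E)$. Using (A3) (more precisely the growth bound $c_2 f_0(x,s)s\le c_1 s^p$, which controls $|f(x,E)|\lesssim |E|^{p-1}$) together with the embedding $\mathcal V\hookrightarrow X^p$ from~\eqref{eq:embeddingsN} and $\mathcal W=\{\nabla u:u\in W^{1,p}(\Omega)\}$, one checks $f(x,E)\in L^{p'}(\Omega;\R^3)$, hence $P\in L^{p'}(\Omega;\R^3)$; by the Helmholtz decomposition (Proposition~\ref{prop:HelmholtzDecompositionN}) we may split $P=P_1+P_2$ with $P_1\in X^{p'}$, $P_2\in Y^{p'}\subset Y^2$ — here the condition $p'\le 2$, i.e.\ $p\ge 2$, is what makes $Y^{p'}\subset Y^2$ and hence $P\in Z$. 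Since $\omega^2\notin\sigma(\mathcal L)\cup\{0\}$, Theorem~\ref{thm:linear_theoryN}(i) says the unique weak solution of $(\mathcal L-\omega^2)\tilde E=P$ in $\mathcal V\oplus Y^2$ is $\tilde E=(\mathcal L-\omega^2)^{-1}P_1+\omega^{-2}P_2$; but $E$ itself is such a weak solution (that is exactly what $I'(E)=0$ encodes, after matching the two orthogonal components via Proposition~\ref{prop:HelmholtzDecompositionN}), and $\mathcal W\subset Y^2$, so $E=\tilde E$, giving $E=(\mathcal L-\omega^2)^{-1}P_1+\omega^{-2}P_2$. Wait — the sign: from~\eqref{eq:dualequationN} the $Y$-component comes with $-\omega^{-2}$, so I would be careful and read the sign off $(\mathcal L-\omega^2)\nabla u=-\omega^2\nabla u$, i.e.\ the second line of the weak formulation on page with $\lambda=\omega^2$ gives $-\omega^2 E_2=P_2$ on the $Y$-part, so $E_2=-\omega^{-2}P_2$. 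In any case, inverting $f(x,\cdot)=\psi(x,\cdot)^{-1}$ pointwise (Proposition~\ref{prop:psi}) yields $E=\psi(x,\eps(x)P)$, and combining with the displayed identity for $E$ produces exactly~\eqref{eq:dualequationN}, i.e.\ $J'(P)=0$.

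For the converse, suppose $J'(P)=0$, $P=P_1+P_2\in X^{p'}\oplus Y^2$, and set $E:=\psi(x,\eps(x)P)$. From (A3') the bound $c_2\psi_0(x,z)z\le c_1 z^{p'}$ gives $|\psi(x,\eps(x)P)|\lesssim |P|^{p'-1}$, so $E\in L^p(\Omega;\R^3)$; the dual equation~\eqref{eq:dualequationN} then reads $E=(\mathcal L-\omega^2)^{-1}P_1-\omega^{-2}P_2$, which exhibits $E$ as an element of $\mathcal V\oplus Y^2$ — and since the $Y$-component $-\omega^{-2}P_2=\nabla(\cdots)\in Y^2$ actually has the regularity forcing it into $\mathcal W=\{\nabla u:u\in W^{1,p}\}$ once we know $E\in L^p$, we get $E\in\mathcal V\oplus\mathcal W$. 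Applying $(\mathcal L-\omega^2)$ (in the weak sense of Theorem~\ref{thm:linear_theoryN}(i)) gives $(\mathcal L-\omega^2)E=P_1+P_2=P$; multiplying by $\eps(x)$ and using $f(x,\cdot)=\psi(x,\cdot)^{-1}$, i.e.\ $\eps(x)P=f(x,E)$, we recover precisely the weak Euler--Lagrange equation $I'(E)=0$. The main obstacle I expect is not any single deep ingredient — it is the careful bookkeeping of the Helmholtz decompositions: verifying that the ``$\mathcal V$-component versus $X^{p'}$-component'' and ``$\mathcal W$-component versus $Y^2$-component'' splittings are mutually consistent under the nonlinear substitution, that $P$ genuinely lands in $Z=X^{p'}\oplus Y^2$ (this is where $p\ge 2$ enters through $Y^{p'}\subset Y^2$), that $E$ genuinely lands in $\mathcal V\oplus\mathcal W$ rather than merely $\mathcal V\oplus Y^2$, and tracking the correct signs of the $\omega^{-2}$ terms throughout.
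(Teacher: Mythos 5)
Your overall route — rewrite both Euler–Lagrange equations via the substitution $E\leftrightarrow P$ and match Helmholtz components — is exactly the paper's, and the converse direction is essentially fine. But there is a genuine gap in the forward direction at the place where you justify $P\in Z=X^{p'}\oplus Y^2$. You decompose $P\in L^{p'}(\Omega;\R^3)$ via Proposition~\ref{prop:HelmholtzDecompositionN} as $P=P_1+P_2$ with $P_1\in X^{p'}$, $P_2\in Y^{p'}$, and then assert $Y^{p'}\subset Y^2$ because $p'\le 2$. That inclusion is backwards: on the bounded domain $\Omega$, the exponent ordering $p'<2<p$ gives $L^p(\Omega)\subset L^2(\Omega)\subset L^{p'}(\Omega)$ and hence $Y^p\subset Y^2\subset Y^{p'}$. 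So knowing $P_2\in Y^{p'}$ does \emph{not} put $P_2$ in $Y^2$; a priori $P$ need not lie in $Z$ at all, and you cannot invoke Theorem~\ref{thm:linear_theoryN}(i) (which needs $g_2\in Y^q$ with $q\ge\max\{2,p'\}=2$) to compare $E$ with the solution formula.

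The correct route, which is what the paper does, is to extract $P_2\in Y^2$ from the Euler–Lagrange equation itself rather than from a priori regularity of $P$. Testing $I'(E)=0$ against $\Phi_1\in\mathcal V$, $\Phi_2=0$ and against $\Phi_1=0$, $\Phi_2\in\mathcal W$ separately and using the Helmholtz orthogonality gives the two identities $E_1=(\mathcal L-\omega^2)^{-1}P_1$ and $-\omega^2 E_2=P_2$. The second identity yields $P_2=-\omega^2E_2\in\mathcal W=Y^p$, and now $Y^p\subset Y^2$ (because $p>2$) is the correct direction, so $P\in X^{p'}\oplus Y^2=Z$. Combining the two identities with $E=\psi(x,\eps(x)P)$ then gives~\eqref{eq:dualequationN} and hence $J'(P)=0$. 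You gesture at the second component when discussing the sign, but you present the membership $P\in Z$ as following from the false inclusion $Y^{p'}\subset Y^2$, which is the step that would fail.
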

\begin{proof}
  Assume $I'(E)=0$ where $E=E_1+E_2$ for $E_1\in \mathcal V, E_2\in \mathcal W$, define 
  $P:=\eps(x)^{-1}f(x,E)$. From $E\in \mathcal V\oplus \mathcal W\subset L^p(\Omega;\R^3)$ and the growth
  properties of $f$ from (A3) we infer $P\in L^{p'}(\Omega;\R^3)$. 
  We write $P=P_1+P_2$ according to the Helmholtz
  Decomposition from Proposition~\ref{prop:HelmholtzDecompositionN}. According to the formula of $I$ 
  from \eqref{eq:defI} we get for $\Phi_1\in\mathcal V,\Phi_2=0$  
  \begin{align*}
    0 &= \int_{\Omega} \mu(x)^{-1}(\nabla\times E_1)\cdot (\nabla\times \Phi_1)\,dx 
    - \omega^2 \int_\Omega \eps(x)E\cdot \Phi_1\,dx
    -  \int_{\Omega} f(x,E)\cdot \Phi_1\,dx \\  
      &= \int_{\Omega} \mu(x)^{-1}(\nabla\times E_1)\cdot (\nabla\times \Phi_1)\,dx 
    - \omega^2 \int_\Omega \eps(x)E_1\cdot \Phi_1\,dx
    -  \int_{\Omega} \eps(x)P_1\cdot \Phi_1\,dx. 
  \end{align*}
  Choosing instead $\Phi_1=0\Phi_2\in\mathcal W$ we get
  \begin{align*}
    0 &=  - \omega^2 \int_\Omega \eps(x)E_2\cdot \Phi_2\,dx
    -  \int_{\Omega} \eps(x)P_2\cdot \Phi_2\,dx    
  \end{align*}
  for all $\Phi_2\in \mathcal W$. This implies   
  $$
    E_1 = (\mathcal L-\omega^2)^{-1}P_1,\qquad 
    E_2 = -\omega^{-2}P_2.
  $$
  In particular, $P_2 = -\omega^2 E_2\in \mathcal W = Y^p \subset Y^2$, so 
  $$
    P=P_1+P_2\in X^{p'}\oplus Y^2=Z
    \quad\text{and } \psi(x,\eps(x)P)=E=E_1+E_2 = (\mathcal L-\omega^2)^{-1}P_1 -\omega^{-2}P_2.
  $$ 
  So \eqref{eq:dualequationN} holds and we conclude  $J'(P)=0$.
 To prove the reverse implication assume $J'(P)=0$ for $P\in X^{p'}\oplus Y^2$ so that
 \eqref{eq:dualequationN} holds.
 This implies $P\in L^{p'}(\Omega;\R^3)$ and hence $E:=\psi(x,\eps(x)P)\in L^p(\Omega;\R^3)$ in view of (A3').
 Recall that (A3') is equivalent to (A3) thanks to Proposition \ref{prop:psi}.
 From \eqref{eq:dualequationN} we even get $E_1= (\mathcal L-\omega^2)^{-1}P_1\in \mathcal V$ by
 Proposition~\ref{prop:linear_theoryN}(iii), so $E\in \mathcal V\oplus \mathcal W$. 
 So~\eqref{eq:dualequationN} and Theorem~\ref{thm:linear_theoryN}(i) imply $I'(E)=0$. 
\end{proof}

Knowing that the dual problem and the original one are equivalent, we may now focus on proving the
existence of critical points of $J$ with the aid of Theorem~\ref{thm:CPTheorem}. 
 
  \begin{prop} \label{prop:PScondition}
    Assume (A1),(A2),(A3'). Then $J$ from \eqref{eq:functionalJdualN} satisfies the Palais-Smale condition.
  \end{prop}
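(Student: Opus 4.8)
The plan is to take a Palais-Smale sequence $(P_n)_n$ for $J$, so $J(P_n)$ is bounded and $J'(P_n)\to 0$ in $Z'$, and to show it has a convergent subsequence in $Z=X^{p'}\oplus Y^2$. First I would establish boundedness: combining the bound on $J(P_n)$ with $J'(P_n)[P_n]=o(\|P_n\|)$, I form the combination $J(P_n)-\tfrac12 J'(P_n)[P_n]$. The quadratic terms in $P_1$ and $P_2$ cancel up to the factor $\tfrac12$, and what remains from the $\Psi$-term is exactly $\int_\Omega\big(\Psi(x,\eps P_n)-\tfrac12\psi(x,\eps P_n)\cdot\eps P_n\big)\dx$, which by the left inequality in \eqref{eq:psi_estimate} is bounded below by $c_1\|\eps P_n\|_{p'}^{p'}\gtrsim\|P_n\|_{p'}^{p'}$ (using uniform positive definiteness of $\eps$ from (A2)). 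Hence $\|P_n\|_{p'}$ is bounded, so $(P_{n,1})_n$ is bounded in $X^{p'}$. For the $Y^2$-component I would test $J'(P_n)$ against $P_{n,2}\in Y^2$: the term $\omega^{-2}\|P_{n,2}\|_\eps^2$ appears, the $(\mathcal L-\omega^2)^{-1}P_{n,1}$ term drops by orthogonality, and the remaining term $\int_\Omega\psi(x,\eps P_n)\cdot\eps P_{n,2}\dx$ is controlled using $|\psi(x,\eps P_n)|\lesssim|\eps P_n|^{p'-1}$ (right inequality in \eqref{eq:psi_estimate}) and Hölder, giving $\|P_{n,2}\|_{L^2}\lesssim\|P_n\|_{p'}^{p'-1}$, hence bounded. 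So $(P_n)_n$ is bounded in $Z$.

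Next I would extract weak limits. Along a subsequence $P_{n,1}\wto P_1$ in $X^{p'}$ and $P_{n,2}\wto P_2$ in $Y^2$; set $P=P_1+P_2$. For compactness I use that $(\mathcal L-\omega^2)^{-1}\colon X^{p'}\to\mathcal V$ is bounded (Proposition~\ref{prop:linear_theoryN}(iii)), and the compact embedding $\mathcal V\hookrightarrow X^p$ for $p<6$ from \eqref{eq:embeddingsN}; hence $(\mathcal L-\omega^2)^{-1}P_{n,1}\to(\mathcal L-\omega^2)^{-1}P_1$ strongly in $X^p\subset L^p(\Omega;\R^3)$. Writing the Euler-Lagrange equation \eqref{eq:dualequationN} in the form $J'(P_n)=0+o(1)$, i.e.
\begin{equation*}
  \psi(x,\eps(x)P_n) - (\mathcal L-\omega^2)^{-1}P_{n,1} + \omega^{-2}P_{n,2} \;=\; R_n
\end{equation*}
where $R_n\to 0$ in the appropriate dual sense, I would pair this against $P_n-P$. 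The middle term contributes $\big\langle (\mathcal L-\omega^2)^{-1}P_{n,1},\,\eps(P_n-P)\big\rangle$, which tends to $0$ by the strong convergence just established together with weak convergence of $P_n$; the last term contributes $\omega^{-2}\langle P_{n,2},\eps(P_{n,2}-P_2)\rangle$, whose liminf controls $\|P_{n,2}-P_2\|_\eps^2$ once $\langle P_{n,2},\eps P_2\rangle\to\|P_2\|_\eps^2$. The first term contributes $\int_\Omega\big(\psi(x,\eps P_n)-\psi(x,\eps P)\big)\cdot\eps(P_n-P)\dx$ plus a term $\int_\Omega\psi(x,\eps P)\cdot\eps(P_n-P)\dx\to 0$ by weak convergence.

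The key point is then strict monotonicity of $P\mapsto\psi(x,\eps(x)P)$: since $z\mapsto z^{-1}\psi_0(x,z)$ is decreasing and $\psi_0$ is increasing, the radial structure $\psi(x,P)=\psi_0(x,|P|)|P|^{-1}P$ gives $\big(\psi(x,a)-\psi(x,b)\big)\cdot(a-b)\ge 0$ with a quantitative lower bound of the form $c\,|a-b|^{p'}$ after inserting $a=\eps P_n$, $b=\eps P$ and using the growth $\psi_0(x,z)z\gtrsim z^{p'}$ — here one splits into the regions $\{|a|\le 2|b|\}$ and $\{|a|>2|b|\}$ as is standard for the $p$-Laplacian-type monotonicity estimate. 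Combining everything, $\|P_{n,1}-P_1\|_{p'}^{p'}+\|P_{n,2}-P_2\|_2^2\to 0$, so $P_n\to P$ in $Z$. I expect the main obstacle to be making this last monotonicity-to-strong-convergence estimate clean: one must carefully separate the $X^{p'}$ part (an $L^{p'}$-monotonicity inequality, not a Hilbert inner product) from the $Y^2$ part (genuine Hilbert structure), and ensure the cross terms coming from $\eps$ not being scalar are absorbed using uniform positive definiteness; the verification that $\psi$ indeed enjoys the quantitative strict monotonicity with exponent $p'$ is the technical heart.
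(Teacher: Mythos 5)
Your argument matches the paper's through the boundedness step (your two-step boundedness proof -- first $\|P_n\|_{p'}$ via $J(P_n)-\tfrac12 J'(P_n)[P_n]$, then $\|P_n^2\|_2$ by testing $J'(P_n)$ against $P_n^2$ -- is a harmless variant of the paper's, which instead reads the $L^2$-bound off $J(P_n)\to c$), and also through the extraction of weak limits and the use of compactness of $(\mathcal L-\omega^2)^{-1}:X^{p'}\to X^p$ to reduce everything to
\[
\int_\Omega \big(\psi(x,\eps P_n)-\psi(x,\eps P)\big)\cdot\eps(P_n-P)\,dx+\omega^{-2}\|P_n^2-P^2\|_\eps^2\to 0 .
\]

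The gap is in your final step. You propose to conclude $\|P_n-P\|_{p'}\to 0$ from a quantitative monotonicity estimate of the form $\big(\psi(x,a)-\psi(x,b)\big)\cdot(a-b)\gtrsim(|a|+|b|)^{p'-2}|a-b|^2$, which after H\"older would give control of $\|P_n-P\|_{p'}$. But this estimate does \emph{not} follow from (A3'). It is equivalent to a pointwise lower bound $\partial_s\psi_0(x,s)\gtrsim s^{p'-2}$ (needed in the regime $|a|\sim|b|$), and (A3') only yields the two-sided bound $\psi_0(x,z)\sim z^{p'-1}$ together with monotonicity of $\psi_0$ and of $z^{-1}\psi_0$; the derivative of $\psi_0$ can degenerate locally (e.g.\ where $f_0$ is very steep, its inverse is nearly flat). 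This is precisely why the paper proves such a quantitative estimate (Proposition~\ref{prop:Evequozargument}) only under the stronger hypothesis (A4), which assumes $cs^{p-2}\le\partial_s f_0(x,s)\le Cs^{p-2}$, and uses it only in the $\R^3$ setting. For Proposition~\ref{prop:PScondition} the paper instead argues qualitatively: the integrand above is nonnegative and tends to $0$ in $L^1$, so along a subsequence it converges to $0$ pointwise a.e.\ and is dominated by some $H\in L^1$; \emph{strict} monotonicity of $\psi(x,\cdot)$ then forces $P_n\to P$ a.e., and the growth bounds $\psi(x,\eps P_n)\cdot\eps P_n\gtrsim|P_n|^{p'}$, $|\psi(x,\eps P_n)|\lesssim|P_n|^{p'-1}$ turn $H$ into an $L^1$-majorant for $|P_n|^{p'}+|P_n^2|^2$, so dominated convergence gives $P_n\to P$ in $L^{p'}$ and $P_n^2\to P^2$ in $L^2$. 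You should replace your last step by an argument of this type (or strengthen the hypothesis to (A4)).
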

  \begin{proof}
    Let $(P_n)$ be a Palais-Smale sequence for $J$,
    so $J(P_n)\to c\in\R$ and $J'(P_n)\to 0$. From (A3') we get
    $$
      c+o(1)\|P_n\|_{p'}
      = J(P_n)-\frac{1}{2}J'(P_n)[P_n]
      = \int_\Omega \Psi(x,P_n) - \frac{1}{2}\psi(x,P_n)\cdot P_n\,dx
      \gtrsim \int_\Omega |P_n|^{p'}\,dx
      = \|P_n\|_{p'}^{p'},
    $$
    so $(P_n)$ is bounded in $L^{p'}(\Omega;\R^3)$. This and $J(P_n)\to c$ imply that $(P_n^2)$ is
    bounded in $L^2(\Omega;\R^3)$ as well, so  we may assume $P_n^1\wto P^1$ in $X^{p'}$ and 
    $P_n^2\wto P^2$ in $Y^2$. From $J'(P_n)\to 0, P_n\wto P$ and the compactness of $(\mathcal
    L-\omega^2)^{-1}:X^{p'}\to X^p$ (Proposition~\ref{prop:linear_theoryN}(iii)) we get
    as $n\to\infty$.
   \begin{align*}
      o(1) 
      &= (J'(P_n)-J'(P))[P_n-P] \\
      &= \int_\Omega \big(\psi(x,\eps(x)P_n)-\psi(x,\eps(x)P)\big)\cdot \eps(x)(P_n-P) +
      \omega^{-2}(P_n^2-P^2)\cdot \eps(x)(P_n^2-P^2)\,dx \\
      &- \int_\Omega (\mathcal L-\omega^2)^{-1}(P_n^1-P^1)\cdot \eps(x)(P_n^1-P^1)\,dx \\
      &= \int_\Omega \big(\psi(x,\eps(x)P_n)-\psi(x,\eps(x)P)\big)\cdot \eps(x)(P_n-P) +
      \omega^{-2}(P_n^2-P^2)\cdot \eps(x)(P_n^2-P^2)\,dx + o(1). 
   \end{align*}  
    The integrand is nonnegative and hence converges to zero in $L^1(\Omega)$. By a Corollary of the
    Riesz-Fischer Theorem, a subsequence, still denoted by $(P_n)$, is pointwise almost everywhere bounded by
    some function $H\in L^1(\Omega)$ and converges to zero pointwise almost everywhere. Since $\psi(x,\cdot)$
    is strictly monotone for almost all $x\in\Omega$ and $\eps(x)$ is uniformly positive definite, we deduce
    $P_n\to P$ and $P_n^2\to P^2$ pointwise almost everywhere.
    Furthermore,  combining
    $$ 
      \big(\psi(x,\eps(x)P_n)-\psi(x,\eps(x)P)\big)\cdot \eps(x)(P_n-P) + \omega^{-2}(P_n^2-P^2)\cdot
      \eps(x)(P_n^2-P^2) \leq H \qquad\text{on }\Omega
    $$
    with the estimates $\psi(x,\eps(x)P_n)\cdot \eps(x)P_n\gtrsim |P_n|^{p'}$ and $|\psi(x,\eps(x) P_n)|\les
    |P_n|^{p'-1}$ from (A3') gives
    $$
      |P_n|^{p'} + |P_n^2|^2 \leq \hat H  \qquad\text{on }\Omega
    $$
    where $\hat H\in L^1(B)$ is defined in terms of $H,P,\psi(x,\eps(x)P)$. So the Dominated Convergence
    Theorem implies  $P_n\to P$ in $L^{p'}(\Omega)$ and $P_n^2\to P^2$ in $L^2(\Omega)$, hence $\|P_n-P\|\to 0$. In
    particular, $J'(P)=0$ and the claim is proved.
 \end{proof}   
 
 We finally prove the existence of critical points. As usual, a ground state is a nontrivial critical point
 with least energy among all nontrivial critical points. We will see that ground states for $J$   give
 rise to ground states for $I$ and hence minimal energy solutions for the original problem.
    
 \begin{thm} \label{thm:dual}
   Assume (A1),(A2),(A3') and $\omega^2\in (0,\infty)\sm \sigma(\mathcal L)$. Then $J$ admits a ground state
   and infinitely many bound states.
 \end{thm}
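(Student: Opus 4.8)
The plan is to deduce Theorem~\ref{thm:dual} from the abstract result Theorem~\ref{thm:CPTheorem}, applied to the even functional $J$ from~\eqref{eq:functionalJdualN} on $Z:=X^{p'}\oplus Y^2$ equipped with the norm $\|P\|:=\|P_1\|_{p'}+\|P_2\|_2$, and then to transfer the critical points to~\eqref{eq:NLCurlCurlN} via Lemma~\ref{lem:equivalenceI}. I would take $Z^-:=\{0\}$ and $Z^+:=Z$ (admissible in $(G_1)$). Since $\psi(x,\cdot)$ is odd and $\Psi(x,0)=0$, the functional $J$ is even with $J(0)=0$, and assumption $(G_4)$ is automatic: $J$ satisfies the Palais-Smale condition by Proposition~\ref{prop:PScondition}, hence is $PS$-attracting by Remark~\ref{rem:Abstract}(a). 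Thus the substance is to verify $(G_2)$ and $(G_3)$. Writing $\Psi_0(x,z):=\int_0^z\psi_0(x,s)\,ds$, the monotonicity of $\psi_0$ together with~\eqref{eq:psi_estimate} gives the chain $c_1 z^{p'}\le\Psi_0(x,z)\le\psi_0(x,z)z\le\tfrac{c_1}{c_2}z^{p'}$ for $z\ge 0$, so by uniform positive definiteness of $\eps$ one has $c\|P\|_{p'}^{p'}\le\int_\Omega\Psi(x,\eps(x)P)\,dx\le C\|P\|_{p'}^{p'}$ and $\int_\Omega\psi(x,\eps(x)P)\cdot\eps(x)P\,dx\ge c\|P\|_{p'}^{p'}$; these are the only growth facts I need.

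For $(G_2)$ I would keep the nonnegative block $\tfrac1{2\omega^2}\int_\Omega P_2\cdot\eps(x)P_2\,dx$ and bound the remaining one by $\big|\int_\Omega(\mathcal L-\omega^2)^{-1}P_1\cdot\eps(x)P_1\,dx\big|\le C\|P_1\|_{p'}^2$, which holds since $(\mathcal L-\omega^2)^{-1}\colon X^{p'}\to\mathcal V\hookrightarrow L^p$ is bounded by Proposition~\ref{prop:linear_theoryN}(iii) and~\eqref{eq:embeddingsN}. This yields $J(P)\ge c\|P_1\|_{p'}^{p'}+c'\|P_2\|_2^2-C\|P_1\|_{p'}^2$. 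For $\|P\|=\rho$ with $\rho$ small enough that $C\rho^{2-p'}\le c/2$, the first and third terms combine to something $\ge\tfrac c2\|P_1\|_{p'}^{p'}\ge 0$, so $J(P)\ge\tfrac c2\|P_1\|_{p'}^{p'}+c'\|P_2\|_2^2$; since $\|P_1\|_{p'}+\|P_2\|_2=\rho$ forces one of the two to be $\ge\rho/2$, we obtain $\inf_{S_\rho^+}J\ge\min\{\tfrac c2(\rho/2)^{p'},c'(\rho/2)^2\}>0$. For $(G_3)$, recall from Proposition~\ref{prop:linear_theoryN}(i) that $\mathcal L$ has an $L^2_\eps$-orthonormal eigenbasis $(\phi_j)_j\subset\mathcal V\subset X^{p'}$ with eigenvalues $0<\lambda_1\le\lambda_2\le\cdots\to\infty$, and since $\omega^2\notin\sigma(\mathcal L)$ all but finitely many $\lambda_j$ exceed $\omega^2$. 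I would let $Z_m\subset X^{p'}\subset Z^+$ be spanned by $m$ of the eigenfunctions $\phi_j$ with $\lambda_j>\omega^2$, so $\dim Z_m=m\to\infty$. For $u\in Z_m\setminus\{0\}$ we have $u_2=0$ and $\langle(\mathcal L-\omega^2)^{-1}u,u\rangle_\eps=\sum_j(\lambda_j-\omega^2)^{-1}|\langle u,\phi_j\rangle_\eps|^2>0$; by compactness of the unit sphere of the finite-dimensional space $Z_m$ this quadratic form is $\ge c_m\|u\|_{p'}^2$ for some $c_m>0$. Hence
\[
  J(u)=\int_\Omega\Psi(x,\eps(x)u)\,dx-\tfrac12\langle(\mathcal L-\omega^2)^{-1}u,u\rangle_\eps\ \le\ C\|u\|_{p'}^{p'}-\tfrac{c_m}2\|u\|_{p'}^2\ \longrightarrow\ -\infty
\]
uniformly on $Z_m$ as $\|u\|\to\infty$, because $p'<2$ and all norms on $Z_m$ are equivalent. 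This establishes $(G_3)$, so Theorem~\ref{thm:CPTheorem} applies; its proof produces a strictly increasing sequence of critical values $d_1<d_2<\cdots$ of $J$ with $\cK_{d_k}\neq\emptyset$, and by Lemma~\ref{lem:equivalenceI} the corresponding fields $E=\psi(x,\eps(x)P)$ are infinitely many bound states of~\eqref{eq:NLCurlCurlN} with pairwise distinct energies (here one also invokes the duality identity $I(E)=J(P)$ discussed below).

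It remains to exhibit a ground state. First I would note that nontrivial critical points are uniformly bounded away from the origin: if $J'(P)=0$ and $P\neq0$, evaluating $J'(P)[P]=0$ gives $\int_\Omega\psi(x,\eps(x)P)\cdot\eps(x)P\,dx=\langle(\mathcal L-\omega^2)^{-1}P_1,P_1\rangle_\eps-\omega^{-2}\|P_2\|_\eps^2\le C\|P_1\|_{p'}^2\le C\|P\|_{p'}^2$, while the left side is $\ge c\|P\|_{p'}^{p'}$, hence $\|P\|_{p'}\ge\delta:=(c/C)^{1/(2-p')}>0$. Moreover $J(P)=J(P)-\tfrac12J'(P)[P]=\int_\Omega\big(\Psi(x,\eps(x)P)-\tfrac12\psi(x,\eps(x)P)\cdot\eps(x)P\big)\,dx\ge c_1\int_\Omega|\eps(x)P|^{p'}\,dx\ge c\,\delta^{p'}>0$, so every nontrivial critical value is positive and $c^\ast:=\inf\{J(P):P\in\cK\setminus\{0\}\}$ lies in $(0,\infty)$ (finiteness since $\cK\setminus\{0\}\neq\emptyset$ by the previous paragraph). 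A minimizing sequence $(P_n)\subset\cK\setminus\{0\}$ is a Palais-Smale sequence, so Proposition~\ref{prop:PScondition} yields a subsequence converging in $Z$ to $P^\ast$ with $J'(P^\ast)=0$, $J(P^\ast)=c^\ast$ and $\|P^\ast\|_{p'}\ge\delta$, hence $P^\ast\neq0$: a ground state of $J$. Finally, combining the Legendre identity $F(x,E)+\Psi(x,f(x,E))=f(x,E)\cdot E$ (with $\eps(x)P=f(x,E)$, $E=\psi(x,\eps(x)P)$) with one integration by parts in the curl-term — a routine computation I would carry out separately — gives $I(E)=J(P)$ whenever $E,P$ are related as in Lemma~\ref{lem:equivalenceI}, so $E^\ast=\psi(x,\eps(x)P^\ast)$ is a least-energy nontrivial solution of~\eqref{eq:NLCurlCurlN}. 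The step I expect to require the most care is $(G_2)$: neither the subquadratic term $\int_\Omega\Psi(x,\eps(x)\cdot)\,dx$ near the origin nor the positive-definite $Y^2$-block alone controls $J$ on the whole sphere of the mixed-integrability space $X^{p'}\oplus Y^2$, so the sign-indefinite block $-\tfrac12\langle(\mathcal L-\omega^2)^{-1}P_1,P_1\rangle_\eps$ must be absorbed into the subquadratic term, which forces a small choice of $\rho$; by contrast the lower bound $\|P\|_{p'}\ge\delta$ and the duality identity are short.
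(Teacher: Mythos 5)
Your verification of the abstract hypotheses $(G_1)$--$(G_4)$ is essentially the paper's own: same space $Z=X^{p'}\oplus Y^2$ with $Z^-=\{0\}$, same choice of $Z_m$ as the span of eigenfunctions with eigenvalues above $\omega^2$ (with the same spectral coercivity estimate $\int(\mathcal L-\omega^2)^{-1}P\cdot\eps P\,dx\ge c_m\|P\|^2$ on $Z_m$ and equivalence of finite-dimensional norms), and the same appeal to Proposition~\ref{prop:PScondition} together with Remark~\ref{rem:Abstract}(a) for the PS-attracting property. Where you add value is in actually spelling out $(G_2)$, which the paper dismisses as ``straightforward'': your absorption of the indefinite block $-\tfrac12\langle(\mathcal L-\omega^2)^{-1}P_1,P_1\rangle_\eps$ into the subquadratic $\Psi$-term for small $\rho$ is the right computation, using $p'<2$, the boundedness of $(\mathcal L-\omega^2)^{-1}\colon X^{p'}\to X^p$, and (implicitly but necessarily) the boundedness of the Helmholtz projection onto $X^{p'}$ to convert $\|P\|_{p'}\gtrsim\|P_1\|_{p'}$.

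Where you genuinely diverge is the ground state. The paper produces a mountain-pass critical point via \cite[Theorem~1.15]{Willem} and then identifies it as a ground state via \cite[Theorem~4.2]{Willem}. You instead prove directly that any nontrivial critical point satisfies $\|P\|_{p'}\ge\delta>0$ (from $J'(P)[P]=0$ and the growth bounds), that its energy is $\ge c\,\delta^{p'}>0$ (from $J(P)=J(P)-\tfrac12 J'(P)[P]$ and \eqref{eq:psi_estimate}), and then minimize $J$ over $\cK\sm\{0\}$: a minimizing sequence is trivially Palais--Smale, so Proposition~\ref{prop:PScondition} gives a convergent subsequence whose limit is a nontrivial critical point at the infimum level. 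This route is more elementary and self-contained---it avoids checking the hypotheses of the cited de Figueiredo--Lions-type comparison theorem and lets the subquadraticity $p'<2$ of the dual functional do all the work---and it is correct as written. (The duality identity $I(E)=J(P)$ you invoke at the end is true for related $E,P$, but it is not needed for Theorem~\ref{thm:dual} itself, which is a statement about $J$ alone; that transfer belongs to the subsequent proof of Theorem~\ref{thm:N}.)
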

 \begin{proof}
  The Banach space $Z=X^{p'}\oplus Y^2$ satisfies $(G_1)$ with $Z^+=Z,Z^-=\{0\}$. We check that the functional
  $J\in C^1(Z)$ has the Mountain Pass Geometry $(G_2),(G_3)$. Indeed, $(G_2)$ is straightforward and as an 
  $m$-dimensional subspace $Z_m$ with the properties required in $(G_3)$ we may
  choose  the span of $m$ linearly independent eigenfunctions associated with eigenvalues
  $\lambda_1,\ldots,\lambda_m \in (\omega^2,\infty)\cap \sigma(\mathcal L)$. With this choice, we get for a
  linear combination of eigenfunctions $P:= \sum_{i=1}^m c_i \phi_i \in Z_m$
  \begin{align*}
    \int_{\Omega} (\mathcal L-\omega^2)^{-1}P\cdot \eps(x)P\,dx
    &= \sum_{i,j=1}^m c_ic_j \int_{\Omega} (\mathcal L-\omega^2)^{-1}\phi_i\cdot \eps(x)\phi_j\,dx \\
    &= \sum_{i,j=1}^m \frac{c_ic_j}{\lambda_i-\omega^2}  \int_{\Omega} \phi_i\cdot
    \eps(x)\phi_j\,dx  \\
    &= \sum_{i=1}^m \frac{c_i^2}{\lambda_i-\omega^2}  \|\phi_i\|_\eps^2 \\
    &\geq \min\{ (\lambda_i-\omega^2)^{-1}:i=1,\ldots,m\} \cdot \|P\|_\eps^2 \\
    &\geq c_m \|P\|^2
  \end{align*}
  for some $c_m>0$. Hence, $P=P_1$ and $P_2=0$ gives for some $C>0$ 
  $$
    J(P)=  \int_{\Omega} \Psi(x,\eps(x)P) \,dx 
    - \frac{1}{2}\int_{\Omega} (\mathcal L-\omega^2)^{-1}P\cdot \eps(x)P\,dx
     \leq C \|P\|_{p'}^{p'} - \frac{c_m}{2} \|P\|^2,
  $$
  which implies $J(P)\to -\infty$ as $P\in Z_m,\|P\|\to\infty$.
  Finally, Proposition~\ref{prop:PScondition} shows that $J$ satisfies the Palais-Smale condition, so $(G_4)$
  holds by Remark~\ref{rem:Abstract}(a). Theorem~\ref{thm:CPTheorem} then implies the existence of infinitely
  many finite energy solutions. Finally, a critical point exists  at the mountain pass level
  \cite[Theorem~1.15]{Willem}  and this solution is in fact a ground state by~\cite[Theorem~4.2]{Willem}.
\end{proof}

\textbf{\textit{Proof of Theorem~\ref{thm:N} for $\omega^2\in (0,\infty)\sm \sigma(\mathcal L)$:}}\; 
Since $f$ satisfies~(A3), the function $\psi(x,\cdot):= f(x,\cdot)^{-1}$
satisfies~(A3') by Proposition~\ref{prop:psi}. So Theorem~\ref{thm:dual}  implies that the functional $J$
has a ground state $P^\star\in Z\sm\{0\}$. By Lemma~\ref{lem:equivalenceI},
$E^\star(x):=\psi(x,P^\star(x))$ satisfies $E^\star\in \mathcal V\oplus\mathcal W\sm\{0\}$ as well as
$I'(E^\star)=0$. In fact, $E^\star$ is even a ground state, which is proved\footnote{ 
One may argue as in the proof of \cite[Theorem~15]{Man_Nonlocal}. In the notation of that paper,
\begin{align*}
  X&:= \mathcal V\oplus \mathcal W,\qquad 
  Y:= Z = X^{p'}\oplus Y^2,\qquad  G:=J,\qquad \varphi(h) := \int_\Omega h\,dx \\
  Q_1(E,\tilde E) &:= \int_\Omega \mu(x)^{-1}(\nabla\times E)\cdot (\nabla\times \tilde
  E)-\omega^2\eps(x)E\cdot \tilde E\,dx,\qquad 
  Q_2(E,\tilde E) := \int_\Omega (\mathcal L-\omega^2)^{-1}E\cdot \tilde E\,dx.
\end{align*} 
The theorem requires the equivalence of the original and the dual problem, see eq.
(10) in \cite{Man_Nonlocal}, which we checked in Lemma~\ref{lem:equivalenceI}. 
Even though $F,G$ are in general not twice continuous differentiable, the same argument as in
\cite{Man_Nonlocal} gives the claim.} as in \cite{Man_Nonlocal}. 
Theorem~\ref{thm:dual} even provides infinitely many other nontrivial critical points of $J$ and hence of $I$,
which finishes the proof of Theorem~\ref{thm:N}.
\qed

 \subsection{\textbf{Case (II)}}
  
  We proceed as in the previous section and start by defining the energy functional  
 \begin{align}
    J(P):= \int_{\Omega}
    \Psi(x,\eps(x)P) \,dx + \frac{1}{2\omega^2} \int_\Omega P_2\cdot \eps(x)P_2\,dx
    - \frac{1}{2}\int_{\Omega} (\mathcal L-\omega^2)^{-1}P_1\cdot \eps(x) P_1\,dx,
    \label{eq:functionalJdualNomega2} 
 \end{align}
 on the Banach space $Z_{\omega^2} := X_{\omega^2}^{p'}\oplus Y^2$.  Note that by our choice of
 $Z_{\omega^2}$ the resolvent is well-defined as a linear operator acting on $P_1\in X_{\omega^2}^{p'}$. We
 have  $J\in C^1(Z_{\omega^2})$ with
    \begin{align*}
      J'(P)[h] = \int_{\Omega}\Psi(x,\eps(x)P)\cdot  \eps(x)h\,dx
      + \omega^{-2} \int_\Omega P_2\cdot \eps(x)h_2\,dx
      - \int_{\Omega} (\mathcal L-\omega^2)^{-1}P_1\cdot \eps(x) h_1\,dx
    \end{align*}
  for all $h\in Z_{\omega^2}$ and the Euler-Lagrange equation reads, in view of
  Proposition~\ref{prop:Xlambda},
  \begin{align}\label{eq:dualequationNomega2}
    \psi(x,\eps(x)P) \in (\mathcal L-\omega^2)^{-1}P_1 -\omega^{-2}P_2 +  \Eig_{\omega^2}^p.    
  \end{align}
 
  \begin{lem} \label{lem:equivalenceomega2}
  Assume (A1),(A2),(A3) and  $\omega^2\in \sigma(\mathcal L)$. Then  
    $I'(E)=0,E\in \mathcal V\oplus \mathcal W$\; if and only if\; $J'(P)=0,P\in 
    X_{\omega^2}^{p'}\oplus Y^2$,
  where $P,E$ are related to each other via $P= \eps(x)^{-1}f(x,E)$, $E=\psi(x,\eps(x)P)$
  with $\psi$ as in Proposition~\ref{prop:psi}.
\end{lem}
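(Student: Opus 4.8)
The plan is to mimic the proof of Lemma~\ref{lem:equivalenceI}, carrying the extra eigenspace term $\Eig_{\omega^2}^p$ through the computation and using the Fredholm-type solvability conditions from Theorem~\ref{thm:linear_theoryN}(ii) in place of part~(i). First I would assume $I'(E)=0$ with $E=E_1+E_2$, $E_1\in\mathcal V$, $E_2\in\mathcal W$, and set $P:=\eps(x)^{-1}f(x,E)$. As in Lemma~\ref{lem:equivalenceI}, the growth bounds in (A3) give $P\in L^{p'}(\Omega;\R^3)$, and I decompose $P=P_1+P_2$ via Proposition~\ref{prop:HelmholtzDecompositionN}. Testing $I'(E)=0$ with $\Phi_1\in\mathcal V$, $\Phi_2=0$ yields the weak form of $(\mathcal L-\omega^2)E_1=P_1$ in $\mathcal V$; since this equation is solvable, Theorem~\ref{thm:linear_theoryN}(ii) forces the compatibility condition $P_1\perp_\eps\Eig_{\omega^2}^p$, i.e. $P_1\in X_{\omega^2}^{p'}$, so that $P\in Z_{\omega^2}$. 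Testing with $\Phi_1=0$, $\Phi_2\in\mathcal W$ gives $E_2=-\omega^{-2}P_2$ and hence $P_2=-\omega^2E_2\in\mathcal W=Y^p\subset Y^2$. The key point is that $E_1$ is only determined up to $\Eig_{\omega^2}$: we have $E_1\in(\mathcal L-\omega^2)^{-1}P_1+\Eig_{\omega^2}$, so $\psi(x,\eps(x)P)=E=E_1+E_2\in(\mathcal L-\omega^2)^{-1}P_1-\omega^{-2}P_2+\Eig_{\omega^2}^p$, which is exactly \eqref{eq:dualequationNomega2}. Reading $J'(P)[h]$ against $h\in Z_{\omega^2}$ and using that the test functions $h_1$ range over $X_{\omega^2}^{p'}$ (so that the $\Eig_{\omega^2}^p$-component is invisible, by Proposition~\ref{prop:Xlambda}) shows $J'(P)=0$.

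For the converse I would start from $J'(P)=0$ with $P=P_1+P_2\in X_{\omega^2}^{p'}\oplus Y^2$, which by Proposition~\ref{prop:Xlambda} is equivalent to \eqref{eq:dualequationNomega2}. The relation \eqref{eq:dualequationNomega2} forces $P\in L^{p'}(\Omega;\R^3)$ and hence $E:=\psi(x,\eps(x)P)\in L^p(\Omega;\R^3)$ via (A3'), recalling (A3')$\Leftrightarrow$(A3) from Proposition~\ref{prop:psi}. Writing $E=E_1+E_2$ with $E_1\in(\mathcal L-\omega^2)^{-1}P_1+\Eig_{\omega^2}\subset\mathcal V$ (using Proposition~\ref{prop:linear_theoryN}(iii) for the resolvent on $X_{\omega^2}^{p'}$, plus $\Eig_{\omega^2}\subset\mathcal V$) and $E_2=-\omega^{-2}P_2\in\mathcal W$, we get $E\in\mathcal V\oplus\mathcal W$. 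Now Theorem~\ref{thm:linear_theoryN}(ii), applied with $g_1=P_1\in X_{\omega^2}^{p'}$ (so the solvability condition $g_1\perp_\eps\Eig_{\omega^2}^p$ holds) and $g_2=P_2$, tells us that $E$ is a weak solution of \eqref{eq:LinBVPN} with $\lambda=\omega^2$ and right-hand side $\eps(x)^{-1}\eps(x)P=P$, which unwinds to the weak form of $I'(E)=0$; testing with $\Phi\in\mathcal V\oplus\mathcal W$ and splitting into the $\mathcal V$- and $\mathcal W$-components matches the two equations characterizing weak solutions of \eqref{eq:LinBVPN}.

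I expect the main obstacle — or at least the only nonroutine point — to be the careful bookkeeping of the $\Eig_{\omega^2}^p$ ambiguity: one must check that the indeterminacy in $E_1$ arising on the $I$-side matches exactly the $\Eig_{\omega^2}^p$ summand appearing in the Euler--Lagrange equation \eqref{eq:dualequationNomega2} on the $J$-side, and that this summand is annihilated when paired against test functions from $X_{\omega^2}^{p'}$ (which is precisely what Proposition~\ref{prop:Xlambda}, namely $(\Eig_{\omega^2}^{p'}\oplus Y^{p'})^{\perp_\eps}=X^p_{\omega^2}$ and its companion identity, is for). Everything else is a verbatim adaptation of the proof of Lemma~\ref{lem:equivalenceI}, with Theorem~\ref{thm:linear_theoryN}(i) replaced by part~(ii) and the resolvent understood on the corrected domain $X_{\omega^2}^{p'}$. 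I would therefore keep the exposition brief, emphasizing only the modifications: the appearance of the solvability constraint $P_1\in X_{\omega^2}^{p'}$, the $\Eig_{\omega^2}^p$-term in \eqref{eq:dualequationNomega2}, and the use of Proposition~\ref{prop:Xlambda} to pass between \eqref{eq:dualequationNomega2} and $J'(P)=0$.
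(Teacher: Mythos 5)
Your proposal is correct and follows essentially the same route as the paper: the Euler--Lagrange equation \eqref{eq:dualequationNomega2} on $Z_{\omega^2}$ is matched with the weak formulation via Theorem~\ref{thm:linear_theoryN}(ii) and Proposition~\ref{prop:Xlambda}, exactly as in the paper's adaptation of Lemma~\ref{lem:equivalenceI}. In fact you are slightly more complete than the printed proof, which only writes out the implication $J'(P)=0\Rightarrow I'(E)=0$ and leaves the converse (including the observation that solvability of $(\mathcal L-\omega^2)E_1=P_1$ forces $P_1\perp_\eps\Eig_{\omega^2}^p$) implicit.
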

 \begin{proof}
   From $P\in Z_{\omega^2}\subset L^{p'}(\Omega;\R^3)$ and the properties of $\psi$ we get 
   $E:= \psi(x,\eps(x)P)\in L^p(\Omega;\R^3)$, in particular $E_2\in Y^p=\mathcal W$.  
   Now if $J'(P)=0$, then  \eqref{eq:dualequationNomega2} implies
   $$
     E= E_1+ E_2,\qquad 
     E_1\in (\mathcal L-\omega^2)^{-1}P_1 +  \Eig_{\omega^2}^p,\quad
     E_2 = -\omega^{-2}P_2 
   $$
   From $P_1\in X^{p'}$ we infer $(\mathcal L-\omega^2)E_1 = P_1$ in the weak
   sense, in particular $E_1\in\mathcal V$. Then $I'(E)=0$ follows and the  claim is proved.
 \end{proof}

  The verification of the Palais-Smale condition and the existence proof for critical points is the same as
  above - it suffices to replace $X^{p'}$ by $X^{p'}_{\omega^2}$.

 \begin{thm} \label{thm:dualomega2}
   Assume (A1),(A2),(A3')  and $\omega^2\in  \sigma(\mathcal L)$. Then  $J$ from 
   \eqref{eq:functionalJdualNomega2} satisfies the Palais-Smale condition and admits  ground states as well as
   infinitely many bound states.
 \end{thm}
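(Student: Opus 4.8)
The plan is to mirror the proof of Proposition~\ref{prop:PScondition} and Theorem~\ref{thm:dual} almost verbatim, the only structural change being that $X^{p'}$ is replaced throughout by its finite-codimensional subspace $X^{p'}_{\omega^2}$, on which — unlike in Case~(I) — the operator $(\mathcal L-\omega^2)^{-1}$ is well-defined by Proposition~\ref{prop:Xlambda} and compact into $X^p$ for $p<6$ by Proposition~\ref{prop:linear_theoryN}(iii) (applied in the case $\lambda=\omega^2\in\sigma(\mathcal L)$). Concretely, for the Palais--Smale condition I would take a PS sequence $(P_n)\subset Z_{\omega^2}=X^{p'}_{\omega^2}\oplus Y^2$ and argue exactly as in Proposition~\ref{prop:PScondition}: the identity $J(P_n)-\tfrac12 J'(P_n)[P_n]=\int_\Omega \Psi(x,\eps(x)P_n)-\tfrac12\psi(x,\eps(x)P_n)\cdot\eps(x)P_n\,dx\gtrsim\|P_n\|_{p'}^{p'}$ from \eqref{eq:psi_estimate} bounds $(P_n^1)$ in $X^{p'}_{\omega^2}$, and then $J(P_n)\to c$ together with the growth bounds $|\psi(x,\eps(x)P_n)|\les|P_n|^{p'-1}$ bounds $(P_n^2)$ in $Y^2$; passing to weak limits $P_n^1\wto P^1$, $P_n^2\wto P^2$ and evaluating $(J'(P_n)-J'(P))[P_n-P]$, the cross term $\int_\Omega(\mathcal L-\omega^2)^{-1}(P_n^1-P^1)\cdot\eps(x)(P_n^1-P^1)\,dx$ vanishes by the compactness of the resolvent on $X^{p'}_{\omega^2}$, leaving a nonnegative integrand that tends to $0$ in $L^1(\Omega)$; strict monotonicity of $\psi(x,\cdot)$ and uniform positive definiteness of $\eps$ then force a.e.\ convergence $P_n\to P$, $P_n^2\to P^2$, and dominated convergence (with the dominating function built from the growth estimates as in Proposition~\ref{prop:PScondition}) upgrades this to $\|P_n-P\|\to 0$. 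By Remark~\ref{rem:Abstract}(a) this gives $(G_4)$.

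For the existence part I would verify the mountain pass geometry $(G_1)$--$(G_3)$ for $J$ on $Z_{\omega^2}$ with $Z^+=Z_{\omega^2}$, $Z^-=\{0\}$. Condition $(G_2)$ is straightforward: since $p'<2$, near the origin the term $\int_\Omega\Psi(x,\eps(x)P)\,dx\gtrsim\|P\|_{p'}^{p'}$ together with $\tfrac{1}{2\omega^2}\|P_2\|_\eps^2$ dominates the indefinite quadratic form $\tfrac12\int_\Omega(\mathcal L-\omega^2)^{-1}P_1\cdot\eps(x)P_1\,dx=O(\|P_1\|_{p'}^2)$, so $\inf_{S_\rho^+}J>0$ for $\rho$ small. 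For $(G_3)$ I would take $Z_m$ to be the span of $m$ eigenfunctions of $\mathcal L$ with eigenvalues $\lambda_1,\dots,\lambda_m\in(\omega^2,\infty)\cap\sigma(\mathcal L)$ — available for every $m$ since the eigenvalues tend to $+\infty$ by Proposition~\ref{prop:linear_theoryN}(i) — noting that these eigenfunctions are $\eps$-orthogonal to $\Eig_{\omega^2}$ and hence lie in $X^{p'}_{\omega^2}$, so $Z_m\subset Z^+$; the same diagonalising computation as in the proof of Theorem~\ref{thm:dual} gives $\int_\Omega(\mathcal L-\omega^2)^{-1}P\cdot\eps(x)P\,dx\ge c_m\|P\|^2$ on $Z_m$, whence $J(P)\le C\|P\|_{p'}^{p'}-\tfrac{c_m}{2}\|P\|^2\to-\infty$ as $\|P\|\to\infty$ in $Z^-\oplus Z_m=Z_m$, and $\dim Z_m=m\nearrow\infty$.

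With $(G_1)$--$(G_4)$ established, Theorem~\ref{thm:CPTheorem} yields infinitely many pairs of critical points of $J$, i.e.\ infinitely many bound states, and a critical point at the mountain pass level exists by \cite[Theorem~1.15]{Willem}, which is a ground state by \cite[Theorem~4.2]{Willem}, exactly as in Theorem~\ref{thm:dual}. I do not expect a genuine obstacle: the argument is essentially bookkeeping. The one point needing care is that every occurrence of $(\mathcal L-\omega^2)^{-1}$ must act only on the component in $X^{p'}_{\omega^2}$; this is precisely what the choice $Z_{\omega^2}=X^{p'}_{\omega^2}\oplus Y^2$ and the orthogonality structure of Proposition~\ref{prop:Xlambda} (matching the solvability condition $g_1\perp_\eps\Eig^p_{\omega^2}$ of Theorem~\ref{thm:linear_theoryN}(ii)) are designed to guarantee.
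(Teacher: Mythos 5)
Your proposal is correct and takes essentially the same approach as the paper, which at this point simply states that the arguments of Proposition~\ref{prop:PScondition} and Theorem~\ref{thm:dual} carry over once $X^{p'}$ is replaced by $X^{p'}_{\omega^2}$. You fill in the bookkeeping details — in particular the observation that eigenfunctions for eigenvalues in $(\omega^2,\infty)\cap\sigma(\mathcal L)$ are automatically $\eps$-orthogonal to $\Eig_{\omega^2}$ and hence lie in $X^{p'}_{\omega^2}$, so the finite-dimensional spaces $Z_m$ from Case~(I) remain valid — which is exactly what the paper leaves implicit.
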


\textbf{\textit{Proof of Theorem~\ref{thm:N} for $\omega^2\in  \sigma(\mathcal L)$:}}\; Same reasoning as in
the case $\omega^2\in (0,\infty)\sm \sigma(\mathcal L)$ up to replacing Theorem~\ref{thm:dual} by
Theorem~\ref{thm:dualomega2}. \qed

\subsection{\textbf{Case (III)}} 
  
  We finally deal with  the static case $\omega^2=0$ where the boundary value problem does not involve the
  permittivity matrix $\eps$ any more, so may without loss of generality assume $\eps(x) := I_{3\times 3}$ and
  ignore (A2). The energy functional is given by $J:X^{p'}\to\R$ with 
 \begin{align*}
    J(P) ;= \int_{\Omega} \Psi(x,P) \,dx  
    - \frac{1}{2}\int_{\Omega} \mathcal L^{-1}P\cdot P\,dx.
 \end{align*} 
 We recall $0\notin \sigma(\mathcal L)$  from Proposition~\ref{prop:linear_theoryN}.
  Once more, this functional belongs to $C^1(X^{p'})$ and the Euler-Lagrange equation reads  
  \begin{align*}
    \psi(x,P) \in \mathcal L^{-1}P +  Y^p 
  \end{align*}
  Proceeding as above we get the following.
   
\begin{lem} \label{lem:equivalence0}
  Assume (A1),(A3) and $\omega^2=0$. Then $I'(E)=0,E\in \mathcal V\oplus \mathcal W$ if and only if
  $J'(P)=0,P\in X^{p'}$ where $P,E$ are related to each other via $P= f(x,E)$, $E=\psi(x,P)$ and $\psi$ is
  given by Proposition~\ref{prop:psi}.
\end{lem}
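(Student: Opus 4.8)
The plan is to mirror the proof of Lemma~\ref{lem:equivalenceI} (Case (I)) with the obvious simplifications coming from $\omega^2=0$ and $\eps = I_{3\times 3}$. First I would assume $I'(E)=0$ for $E=E_1+E_2$ with $E_1\in\mathcal V$, $E_2\in\mathcal W$, and set $P:=f(x,E)$. From $E\in\mathcal V\oplus\mathcal W\subset L^p(\Omega;\R^3)$ and the growth bound $|f(x,E)|\lesssim |E|^{p-1}$ contained in (A3), one gets $P\in L^{p'}(\Omega;\R^3)$. Writing $P=P_1+P_2$ according to the Helmholtz decomposition of Proposition~\ref{prop:HelmholtzDecompositionN}, I would test the identity $I'(E)=0$ first against $\Phi=\Phi_1\in\mathcal V$ and then against $\Phi=\Phi_2\in\mathcal W=Y^p$. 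Since the curl operator vanishes on $\mathcal W$, the first test gives $\int_\Omega \mu^{-1}(\nabla\times E_1)\cdot(\nabla\times\Phi_1)\,dx=\int_\Omega P_1\cdot\Phi_1\,dx$ for all $\Phi_1\in\mathcal V$, i.e. $\mathcal L E_1 = P_1$ in the weak sense, hence $E_1=\mathcal L^{-1}P_1\in\mathcal V$ by Proposition~\ref{prop:linear_theoryN}(iii) (using $0\notin\sigma(\mathcal L)$). The second test gives $\int_\Omega f(x,E)\cdot\Phi_2\,dx=0$ for all $\Phi_2\in\mathcal W$, and since $f(x,E)=P$ decomposes with $P_2\in Y^p$ being its $Y^p$-component, this forces $P_2=0$.

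Consequently $P=P_1\in X^{p'}$, and $E=\psi(x,P)=E_1\in\mathcal L^{-1}P + Y^p$ because $E_2\in\mathcal W=Y^p$; this is precisely the Euler--Lagrange equation $\psi(x,P)\in\mathcal L^{-1}P+Y^p$, equivalently $J'(P)=0$. For the converse I would start from $J'(P)=0$ with $P\in X^{p'}$, so $\psi(x,P)\in\mathcal L^{-1}P+Y^p$. Since $P\in X^{p'}\subset L^{p'}(\Omega;\R^3)$, assumption (A3') (valid by Proposition~\ref{prop:psi}) gives $E:=\psi(x,P)\in L^p(\Omega;\R^3)$, and the Euler--Lagrange relation shows $E=E_1+E_2$ with $E_1:=\mathcal L^{-1}P\in\mathcal V$ and $E_2\in Y^p=\mathcal W$, so $E\in\mathcal V\oplus\mathcal W$. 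Then $\mathcal L E_1 = P$ weakly, i.e. $\int_\Omega\mu^{-1}(\nabla\times E_1)\cdot(\nabla\times\Phi)\,dx=\int_\Omega P\cdot\Phi\,dx$ for all $\Phi\in\mathcal V$, and since $\nabla\times E_2=0$ and $P_2=0$ this is exactly the weak form of $I'(E)=0$ tested against $\mathcal V\oplus\mathcal W$ (using Theorem~\ref{thm:linear_theoryN}(iii) with $g_2=0$). Finally one records that $P=f(x,E)$ and $E=\psi(x,P)$ are consistent because $\psi(x,\cdot)=f(x,\cdot)^{-1}$.

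The only mild subtlety — and the step I would be most careful about — is the bookkeeping of the Helmholtz decomposition in the static case: one must check that $P_2$, the $Y^p$-component of $f(x,E)$, genuinely vanishes, which uses that $\mathcal W=Y^p$ coincides with the curl-free part and that the pairing $\int_\Omega P\cdot\Phi_2\,dx$ is the correct (now $\eps$-free) one; since $\eps=I_{3\times 3}$ here, $X^{p'}$ and $Y^{p'}$ are the usual divergence-free/gradient spaces and $\perp_\eps$ is ordinary $L^2$-orthogonality, so Proposition~\ref{prop:HelmholtzDecompositionN} applies verbatim. Apart from this, the proof is a routine transcription of Lemma~\ref{lem:equivalenceI}, so I would simply write: \emph{Proceed exactly as in the proof of Lemma~\ref{lem:equivalenceI}, replacing $(\mathcal L-\omega^2)^{-1}$ by $\mathcal L^{-1}$, using $\eps=I_{3\times 3}$, and noting that Theorem~\ref{thm:linear_theoryN}(iii) (rather than (i)) supplies the solvability statement, in particular that a weak solution exists only if $g_2=0$, which matches $P_2=0$.}
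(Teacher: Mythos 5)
Your proof is correct and takes the same route the paper intends: the paper itself gives no explicit proof for Lemma~\ref{lem:equivalence0} beyond ``Proceeding as above,'' and your argument is precisely that transcription of Lemma~\ref{lem:equivalenceI} with $(\mathcal L-\omega^2)^{-1}$ replaced by $\mathcal L^{-1}$ and the $Y^2$-block dropped. You correctly isolate the one step that is genuinely new in the static case, namely that testing $I'(E)=0$ against $\Phi_2\in\mathcal W$ forces $\int_\Omega P_2\cdot\Phi_2\,dx=0$ for all $\Phi_2\in Y^p$ and hence $P_2\in X^{p'}\cap Y^{p'}=\{0\}$, matching the solvability condition $g_2=0$ in Theorem~\ref{thm:linear_theoryN}(iii).
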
 

 \begin{thm} \label{thm:dual0}
   Assume (A1),(A3') and $\omega^2=0$. Then $J\in C^1(X^{p'})$ satisfies the Palais-Smale condition
   and admits ground states as well as infinitely many bound states.
 \end{thm}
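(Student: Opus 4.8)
The plan is to copy the proof of Theorem~\ref{thm:dual} almost verbatim; the static case is in fact simpler because $0\notin\sigma(\mathcal{L})$ by Proposition~\ref{prop:linear_theoryN}(i), so that $\mathcal{L}^{-1}\colon X^{p'}\to\mathcal V$ is bounded and, by~\eqref{eq:embeddingsN}, compact as an operator $X^{p'}\to X^p$ because $p<6$. The $C^1$-regularity of $J$ on $X^{p'}$ and the formula for $J'$ are obtained exactly as in Case~(I), and there is no $Y^2$-summand to carry along since here the problem does not involve $\eps$.

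First I would establish the Palais--Smale condition by repeating the argument of Proposition~\ref{prop:PScondition} with the $Y^2$-component removed: for a Palais--Smale sequence $(P_n)$ the lower bound in~\eqref{eq:psi_estimate} gives
\[
  c+o(1)\|P_n\|_{p'}=J(P_n)-\tfrac12 J'(P_n)[P_n]
  =\int_\Omega \Psi(x,P_n)-\tfrac12\psi(x,P_n)\cdot P_n\,dx\;\gtrsim\;\|P_n\|_{p'}^{p'},
\]
so $(P_n)$ is bounded in $L^{p'}(\Omega;\R^3)$ and, up to a subsequence, $P_n\wto P$ in $X^{p'}$. Compactness of $\mathcal{L}^{-1}$ reduces $(J'(P_n)-J'(P))[P_n-P]$ to $\int_\Omega(\psi(x,P_n)-\psi(x,P))\cdot(P_n-P)\,dx+o(1)$; its integrand is nonnegative and hence tends to $0$ in $L^1(\Omega)$, strict monotonicity of $\psi(x,\cdot)$ forces $P_n\to P$ pointwise a.e., and the growth bounds $\psi(x,P_n)\cdot P_n\gtrsim|P_n|^{p'}$, $|\psi(x,P_n)|\les|P_n|^{p'-1}$ from (A3') supply an $L^1$-majorant, so dominated convergence yields $P_n\to P$ in $L^{p'}(\Omega;\R^3)$ and thus $\|P_n-P\|\to0$.

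Next I would check the mountain-pass geometry $(G_1)$--$(G_4)$ with $Z=X^{p'}$, $Z^+=Z$, $Z^-=\{0\}$. For $(G_2)$ one uses $c_1|z|^{p'}\le\Psi(x,z)\le\tfrac{c_1}{c_2}|z|^{p'}$, a consequence of~\eqref{eq:psi_estimate}, and $\int_\Omega\mathcal{L}^{-1}P\cdot P\,dx\le C\|P\|_{p'}^2$ by Hölder and boundedness of $\mathcal{L}^{-1}\colon X^{p'}\to X^p$, to obtain $J\ge c_1\rho^{p'}-C\rho^2>0$ on $S_\rho^+$ for small $\rho$, which works precisely because $p'<2$. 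For $(G_3)$ I would take $Z_m:=\spa\{\phi_1,\dots,\phi_m\}$ with $\phi_i$ eigenfunctions of $\mathcal{L}$ for eigenvalues $\lambda_i>0$; exactly as in the proof of Theorem~\ref{thm:dual} one gets $\int_\Omega\mathcal{L}^{-1}P\cdot P\,dx\ge c_m\|P\|^2$ on $Z_m$, whence $J(P)\le C\|P\|^{p'}-\tfrac{c_m}{2}\|P\|^2\to-\infty$ uniformly on $Z_m$ as $\|P\|\to\infty$ (again $p'<2$), with $\dim Z_m=m\nearrow\infty$. Assumption $(G_4)$ then follows from the Palais--Smale condition via Remark~\ref{rem:Abstract}(a). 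Applying Theorem~\ref{thm:CPTheorem} shows that every $\mathcal K$-decomposition of $J$ is infinite, so $J$ has infinitely many pairs of nontrivial critical points --- the bound states. For the ground state I would, as in the proof of Theorem~\ref{thm:dual}, invoke \cite[Theorem~1.15]{Willem} to produce a critical point at the mountain-pass level and \cite[Theorem~4.2]{Willem} to identify it as one of least energy among nontrivial critical points.

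I do not expect a genuine obstacle: the whole argument is a routine specialization of Case~(I), streamlined by the fact that $\mathcal{L}^{-1}$ is automatically defined on all of $X^{p'}$ (no spectral exclusion, since $0\notin\sigma(\mathcal{L})$). The only points that call for a little attention are the exponent bookkeeping --- it is exactly $p'<2$ that makes both the positivity in $(G_2)$ and the divergence to $-\infty$ in $(G_3)$ hold --- and the identification of the mountain-pass solution as a ground state.
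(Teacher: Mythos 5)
Your proposal is correct and follows exactly the route the paper intends: the paper gives no separate proof of Theorem~\ref{thm:dual0} but merely states it after the remark ``Proceeding as above we get the following,'' and the intended argument is precisely the specialization of the proof of Theorem~\ref{thm:dual} (Case (I)) with the $Y^2$-summand dropped and $(\mathcal L-\omega^2)^{-1}$ replaced by $\mathcal L^{-1}$, which is well-defined on all of $X^{p'}$ since $0\notin\sigma(\mathcal L)$. Your verification of $(G_2)$--$(G_3)$, including the exponent bookkeeping around $p'<2$, the choice of $Z_m$ spanned by eigenfunctions, the use of the PS condition via Remark~\ref{rem:Abstract}(a) for $(G_4)$, and the appeal to Willem's theorems for the ground state, all match the paper's template.
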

 

\textbf{\textit{Proof of Theorem~\ref{thm:N} for $\omega^2=0$:}}\;   Same reasoning as in the
case $\omega^2\in (0,\infty)\sm \sigma(\mathcal L)$ up to replacing Theorem~\ref{thm:dual} by
Theorem~\ref{thm:dual0}. \qed

\section{Proof of Theorem~\ref{thm:R3}} \label{sec:TheoremR3}

We now use the dual variational method to prove the existence of infinitely many $L^p$-solutions to
the Nonlinear time-harmonic Maxwell's equation \eqref{eq:NLCurlCurlR3}   under the assumption
\begin{align} \label{eq:assumptions_fullspace}
  \eps(x)= \eps_0 \in (0,\infty),\qquad
  \mu(x) = \mu_0 \in (0,\infty),\qquad \omega^2>0,\qquad  
  \text{$f(\cdot,E)$ satisfies (A4).} 
\end{align} 
So  the equation to solve is
$$
  \nabla\times\nabla\times E - \lambda E = f(x,E) \quad\text{in }\R^3
  \qquad\text{where }\lambda:=\omega^2 \eps_0\mu_0.
$$ 
To this end we adapt the strategy that Evequoz and Weth~\cite{EveqWeth_Dual} used to prove the existence of
dual ground states for the Nonlinear Helmholtz Equation. In order to implement the dual variational method in the
Maxwell setting, we use the Helmholtz Decomposition on $\R^3$. We recall that
$\dot{W}^{1,p}(\R^3;\R^3)$ is a homogeneous Sobolev space, i.e., the closure of test functions with respect
to $u\mapsto \|\nabla u\|_p$. We define
\begin{align*}
    X^{p'} &:= \Big\{ E \in L^{p'}(\R^3;\R^3) : \int_\Omega E\cdot \nabla\Phi\,dx = 0 \;\text{for all } \Phi\in
    \dot{W}^{1,p}(\R^3;\R^3)\Big\}, \\
    Y^{p'} &:= \Big\{ \nabla u : u\in \dot{W}^{1,p'}(\R^3;\R^3)\Big\}.
  \end{align*}

\begin{prop} \label{prop:HelmholtzDecomposition}
  Assume $1<p<\infty$. Then $L^{p'}(\R^3;\R^3)=X^{p'}\oplus Y^{p'}$ with
  $(X^{p'})^{\perp} = Y^p, (Y^{p'})^{\perp} = X^p$. We have, in the distributional sense,  
  \begin{equation} \label{eq:HelmholtzDec}
    \nabla\times\nabla\times E_1 = -\Delta E_1 \quad\text{for }E_1\in X^{p'},\qquad\quad 
    \nabla\times\nabla\times E_2 =0\quad\text{for }E_2\in Y^{p'}.
  \end{equation}
\end{prop}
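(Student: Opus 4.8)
The plan is to obtain the decomposition from the Leray--Helmholtz projection on $\R^3$. Let $\mathbb{P}$ denote the Fourier multiplier operator with matrix-valued symbol $m(\xi)=I_3-|\xi|^{-2}\,\xi\otimes\xi$ and put $\mathbb{Q}:=\id-\mathbb{P}$, which has symbol $|\xi|^{-2}\,\xi\otimes\xi$ and satisfies $\mathbb{Q}f=\nabla(\Delta^{-1}\diver f)$. Both symbols are homogeneous of degree $0$ and smooth on $\R^3\sm\{0\}$, so the Mikhlin--H\"ormander multiplier theorem shows that $\mathbb{P}$ and $\mathbb{Q}$ are bounded on $L^q(\R^3;\R^3)$ for every $1<q<\infty$. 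Since $m(\xi)^2=m(\xi)=m(\xi)^{T}$ almost everywhere, $\mathbb{P}$ is a bounded projection which is self-adjoint in the sense that $\int_{\R^3}(\mathbb{P}f)\cdot g\,dx=\int_{\R^3}f\cdot(\mathbb{P}g)\,dx$ (first for Schwartz functions via Plancherel, then for $f\in L^q$, $g\in L^{q'}$ by density). One also reads off from the symbols that $\mathbb{P}(\nabla\phi)=0$ and $\diver(\mathbb{P}f)=0$ in $\cD'(\R^3)$ for all $\phi\in C_c^\infty(\R^3)$ and $f\in L^q(\R^3;\R^3)$.

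First I would identify $\ran(\mathbb{P})$ and $\ran(\mathbb{Q})$ (the argument below works for any exponent in $(1,\infty)$, so also for $p$, which makes sense of $X^p,Y^p$). Because $\diver(\mathbb{P}f)=0$ in $\cD'$ and $\mathbb{P}f\in L^{p'}$, H\"older's inequality together with the density of $C_c^\infty(\R^3)$ in $\dot{W}^{1,p}(\R^3)$ yields $\int_{\R^3}(\mathbb{P}f)\cdot\nabla\Phi\,dx=0$ for all $\Phi\in\dot{W}^{1,p}(\R^3)$, i.e.\ $\ran(\mathbb{P})\subset X^{p'}$; conversely $E\in X^{p'}$ implies $\diver E=0$ in $\cD'$, hence $\mathbb{Q}E=0$ and $E=\mathbb{P}E$, so $X^{p'}=\ran(\mathbb{P})$. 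For $\mathbb{Q}$: given $\Psi\in C_c^\infty(\R^3;\R^3)$ the field $\mathbb{Q}\Psi=\nabla u$ with $u$ the Newtonian potential of $\diver\Psi$, which is smooth and decays like $|x|^{-2}$ at infinity (the $|x|^{-1}$ term cancels since $\int\diver\Psi=0$); a cutoff-plus-mollification argument whose error terms vanish precisely because $p'>1$ then shows $u\in\dot{W}^{1,p'}(\R^3)$, so $\mathbb{Q}(C_c^\infty)\subset Y^{p'}$. As $Y^{p'}$ is the isometric image in $L^{p'}$ of the complete space $\dot{W}^{1,p'}$, equivalently the $L^{p'}$-closure of $\{\nabla\phi:\phi\in C_c^\infty(\R^3)\}$, it is closed; combined with the density of $C_c^\infty$ in $L^{p'}$ and the continuity of $\mathbb{Q}$ this gives $\ran(\mathbb{Q})=\overline{\mathbb{Q}(C_c^\infty)}\subset Y^{p'}$, while $\nabla\phi=\mathbb{Q}(\nabla\phi)\in\ran(\mathbb{Q})$ and closedness of $\ran(\mathbb{Q})=\ker(\mathbb{P})$ give the reverse inclusion. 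Thus $Y^{p'}=\ran(\mathbb{Q})=\ker(\mathbb{P})$, and the canonical splitting associated with a bounded projection yields $L^{p'}(\R^3;\R^3)=\ran(\mathbb{P})\oplus\ker(\mathbb{P})=X^{p'}\oplus Y^{p'}$ as a topological direct sum; in particular $X^{p'}\cap Y^{p'}=\{0\}$, which one could alternatively see from Liouville's theorem since an element of the intersection is a curl-free, divergence-free, hence harmonic, $L^{p'}$ vector field on $\R^3$.

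For the annihilator identities, if $g=\nabla v\in Y^p$ with $v\in\dot{W}^{1,p}(\R^3)$ then the defining property of $X^{p'}$ (applied with $\Phi=v$) gives $\int_{\R^3}E\cdot g\,dx=0$ for all $E\in X^{p'}$, so $Y^p\subset(X^{p'})^{\perp}$. Conversely, if $g\in L^p$ annihilates $X^{p'}=\ran(\mathbb{P})$, then for every $h\in L^{p'}$ we get $0=\int_{\R^3}g\cdot(\mathbb{P}h)\,dx=\int_{\R^3}(\mathbb{P}g)\cdot h\,dx$ by self-adjointness, so $\mathbb{P}g=0$ and $g=\mathbb{Q}g\in Y^p$; hence $(X^{p'})^{\perp}=Y^p$. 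Running the same argument with $\mathbb{Q}$ in place of $\mathbb{P}$ gives $(Y^{p'})^{\perp}=X^p$. Finally, for \eqref{eq:HelmholtzDec}: if $E_2=\nabla u\in Y^{p'}$ then $\nabla\times\nabla\times\nabla u=\nabla\times 0=0$ in $\cD'(\R^3)$, and if $E_1\in X^{p'}$ the distributional vector identity $\nabla\times\nabla\times E_1=\nabla(\diver E_1)-\Delta E_1$ together with $\diver E_1=0$ gives $\nabla\times\nabla\times E_1=-\Delta E_1$. I expect the only genuinely technical step to be the identification $\ran(\mathbb{Q})=Y^{p'}$, i.e.\ checking that $\mathbb{Q}f$ is the gradient of an element of the homogeneous Sobolev space $\dot{W}^{1,p'}$; the $L^q$-boundedness of $\mathbb{P}$ is a black-box consequence of Mikhlin--H\"ormander, and everything else is bookkeeping with projections and duality.
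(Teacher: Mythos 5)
Your proposal is correct and follows essentially the same route as the paper: the decomposition is defined via the Fourier multipliers $I-|\xi|^{-2}\xi\otimes\xi$ and $|\xi|^{-2}\xi\otimes\xi$ (the Leray projection and its complement), boundedness comes from Mikhlin's theorem, the annihilator identities follow from the symmetry of the projection, and \eqref{eq:HelmholtzDec} follows from $\nabla\times\nabla\times\Phi=-\Delta\Phi+\nabla(\nabla\cdot\Phi)$ together with the fact that the curl annihilates gradients. The only difference is that you spell out the steps the paper declares ``straightforward to check,'' in particular the identification $\ran(\mathbb{Q})=Y^{p'}$ via the decay of the Newtonian potential and a cutoff argument, which is a worthwhile addition but not a different method.
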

\begin{proof}
  For any given $E\in L^{p'}(\R^3;\R^3)$ we define 
  $$
    \widehat{E_1}(\xi):= \hat E(\xi) - |\xi|^{-2} (\xi\cdot\hat E(\xi)) \xi,\qquad
    \widehat{E_2}(\xi):=  |\xi|^{-2} (\xi\cdot\hat E(\xi)) \xi 
  $$
  Mikhlin's Multiplier Theorem implies that $E_1,E_2$ indeed belong to $L^{p'}(\R^3;\R^3)$. It is
  straightforward to check $(X^{p'})^{\perp} = Y^p, (Y^{p'})^{\perp} = X^p$.   
  Then \eqref{eq:HelmholtzDec} follows from the   identity
  $\nabla\times\nabla\times \Phi = -\Delta \Phi + \nabla(\nabla\cdot \Phi)$ for $\Phi\in C_0^\infty(\R^3;\R^3)$ and from the fact that the curl
  operator annihilates gradients.
\end{proof}

As a consequence, it suffices to find solutions for
$$
  (-\Delta -\lambda)E_1 - \lambda E_2 = f(x,E_1+E_2) \quad\text{in }\R^3.
$$  
To prove the existence of nontrivial solutions to this problem we study a dual problem that, 
in contrast to the situation on bounded domains studied earlier, is not equivalent to the original problem.
This is due to the fact that $-\Delta -\lambda$ is not invertible on $X^{p'}$, but admits
some sort of a right inverse $\mathcal R$ defined via the Limiting Absorption Principle for the
Helmholtz equation. We define $\mathcal R:= \Re \big[(-\Delta -\lambda-i0)^{-1}\big]$, i.e., 
  \begin{equation} \label{eq:Romega2}
     \mathcal R:X^{p'}\to X^p,\quad
     g\mapsto \lim_{\eps\to 0^+} \Re\left(  \mathcal F^{-1}\left(\frac{\hat
     g}{|\cdot|^2-\lambda-i\eps}\right)\right). 
  \end{equation}
It is known \cite[Theorem~2.1]{EveqWeth_Dual} that this is a well-defined bounded linear operator for $4\leq
p\leq 6$. Note that $\frac{2(n+1)}{n-1}=4$ corresponds to the Stein-Tomas exponent and $\frac{2n}{n-2}=6$ to
the Sobolev-critical exponent for $n=3$. Introducting $P:= f(x,E_1+E_2)$ we find that it is sufficient
to find a solution to 
$$
  E_1 = \mathcal R(P^1),\qquad E_2= - \lambda^{-1} P^2.
$$ 
This leads to the study of the energy functional
  \begin{equation}\label{eq:defJ_R3}
   J(P) = \int_{\R^3} \Psi(x,P)\,dx  + \frac{1}{2\lambda} \int_{\R^3} |P^2|^2\,dx
      - \frac{1}{2}\int_{\R^3} P^1\cdot \mathcal R(P^1)\,dx
  \end{equation}  
that is well-defined on the Banach space
$$
    Z := X^{p'}\oplus (Y^2\cap Y^{p'})
    \quad\text{with norm }\|P\| := \|P\|_{p'} + \|P^2\|_2.
  $$
Note that the assumption $P_2\in Y^2$ is not sufficient. 
The functional $J$ is continuously differentiable and the Euler-Lagrange equation reads
\begin{equation} \label{eq:ELR3}
    \psi(x,P) 
     = \mathcal R(P_1) - \lambda^{-2}P_2.
\end{equation}
It turns out that solutions $P$ of this dual problem lead to solutions of the function space $\mathcal
V\oplus\mathcal W$ where
\begin{align*}
  \mathcal V := \mathcal R(X^{p'}),\quad
  \mathcal W := Y^p\cap Y^{p'},\qquad\text{so }
  \mathcal V\oplus \mathcal W\subset L^p(\R^3;\R^3).
\end{align*}
Moreover, local elliptic regularity theory implies $\mathcal V\subset W^{2,p'}_{\loc}(\R^3;\R^3)$. 
A function $E=E_1+E_2\in \mathcal V\oplus \mathcal W$ is called  a weak solution of \eqref{eq:NLCurlCurlR3} if
\begin{equation} \label{eq:WeakSolutionR3}
  \int_{\R^3} (\nabla \times E_1)\cdot (\nabla \times \Phi_1) - \lambda E\cdot\Phi \,dx 
  = \int_{\R^3} f(x,E)\cdot \Phi\,dx
  \qquad\text{for all }\Phi\in (\mathcal V \oplus \mathcal W)\cap C_0^\infty(\Omega;\R^3)
\end{equation}
and all integrals are well-defined by the choice of our space of test functions. 
 
\begin{lem} \label{lem:equivalenceR3}
  Assume \eqref{eq:assumptions_fullspace}. Then 
  $J'(P)=0,P\in Z$ implies $I'(E)=0,E\in \mathcal V\oplus \mathcal W\subset L^p(\R^3;\R^3)$ if 
  where $E=\psi(x,P)$ and $\psi$ is given by Proposition~\ref{prop:psi}.
\end{lem}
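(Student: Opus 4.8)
Since $J$ is continuously differentiable with Euler--Lagrange equation \eqref{eq:ELR3}, the hypothesis $J'(P)=0$ just means that $\psi(x,P)=\mathcal R(P_1)-\lambda^{-1}P_2$ holds in $L^p(\R^3;\R^3)$. The plan is: first verify that $E:=\psi(x,P)$ genuinely belongs to $\mathcal V\oplus\mathcal W$; then rewrite \eqref{eq:ELR3} as the distributional identity $\nabla\times\nabla\times E-\lambda E=P=f(x,E)$ by combining the right-inverse property of $\mathcal R$ with the curl--curl identities of Proposition~\ref{prop:HelmholtzDecomposition}; and finally test this identity against smooth compactly supported fields to reach the weak formulation \eqref{eq:WeakSolutionR3}.

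For the first point I would observe that the growth bound $|\psi(x,z)|\les |z|^{p'-1}$ contained in (A3'), together with $P\in Z\subset L^{p'}(\R^3;\R^3)$, yields $E=\psi(x,P)\in L^p(\R^3;\R^3)$. Equation \eqref{eq:ELR3} then displays the Helmholtz splitting $E=E_1+E_2$ with $E_1:=\mathcal R(P_1)\in\mathcal R(X^{p'})=\mathcal V\subset X^p$ and $E_2:=E-E_1=-\lambda^{-1}P_2$. The field $E_2$ is a gradient that lies in $L^p$ (since both $E$ and $E_1$ do), hence $E_2\in Y^p$, and it also lies in $Y^{p'}$ because $P_2\in Y^{p'}$ by the very definition of $Z$. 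Consequently $E_2\in Y^p\cap Y^{p'}=\mathcal W$, so $E\in\mathcal V\oplus\mathcal W\subset L^p(\R^3;\R^3)$.

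For the remaining steps I would use that $\mathcal R$ is a right inverse of $-\Delta-\lambda$ on $X^{p'}$, so that $(-\Delta-\lambda)E_1=P_1$ distributionally, while Proposition~\ref{prop:HelmholtzDecomposition} gives $\nabla\times\nabla\times E_1=-\Delta E_1$ and $\nabla\times\nabla\times E_2=0$. Hence, by \eqref{eq:ELR3},
\[
  \nabla\times\nabla\times E-\lambda E=(-\Delta-\lambda)E_1-\lambda E_2=P_1+P_2=P=f(x,E),
\]
the last equality because $E=\psi(x,P)$ is equivalent to $P=f(x,E)$. Pairing this identity with an arbitrary $\Phi\in(\mathcal V\oplus\mathcal W)\cap C_0^\infty(\R^3;\R^3)$, integrating by parts --- admissible since $\Phi$ is smooth with compact support and $E_1\in W^{2,p'}_{\loc}(\R^3;\R^3)$ --- and using that gradients pair trivially with divergence-free fields to split off the $\mathcal V$-- and $\mathcal W$--contributions, one recovers \eqref{eq:WeakSolutionR3}; that is, $I'(E)=0$.

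The proof is, in the end, a bookkeeping translation between the dual and the primal equation, and its substantive inputs --- the mapping and right-inverse properties of $\mathcal R$ from \cite{EveqWeth_Dual} and the curl--curl identities of Proposition~\ref{prop:HelmholtzDecomposition} --- are available beforehand. The step deserving the most care is the verification that $E\in\mathcal V\oplus\mathcal W$, specifically that $E_2\in Y^{p'}$: this is precisely where the choice $Z=X^{p'}\oplus(Y^2\cap Y^{p'})$, rather than the naive $X^{p'}\oplus Y^2$, is needed, in accordance with the earlier remark that membership of $P_2$ in $Y^2$ alone would not suffice.
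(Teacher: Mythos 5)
Your proof is correct and takes essentially the same route as the paper's: deduce $E\in L^p$ from the growth of $\psi$, read off the splitting $E_1=\mathcal R(P_1)$, $E_2=-\lambda^{-1}P_2$ from the Euler--Lagrange equation, check the spaces, and conclude the weak formulation \eqref{eq:WeakSolutionR3}. Your added justification that $E_2\in Y^p\cap Y^{p'}$ (combining $E_2=E-E_1\in L^p$ with $P_2\in Y^{p'}$) in fact fills in a detail that the paper's very terse proof leaves implicit.
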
 
\begin{proof}
  Let $P\in Z$ satisfy $J'(P)=0$, so \eqref{eq:ELR3} holds. From $Z\subset L^{p'}(\R^3;\R^3)$ and the
  growth conditions of $\psi$ from (A3') we deduce $E:=\psi(x,P) \in L^p(\R^3;\R^3)$.  The Euler-Lagrange
  equation gives 
  $$
    E=E_1+E_2,\qquad
    E_1 = \mathcal R(P_1),\quad
    E_2 = -\lambda^{-1} P_2 
  $$
  and thus $E_1\in\mathcal V,E_2\in\mathcal W$ and $I'(E)=0$ in the sense of \eqref{eq:WeakSolutionR3}. 
\end{proof}
  
   Now we construct  infinitely many nontrivial critical points for $J$ with the aid of
   Theorem~\ref{thm:CPTheorem}. We shall need the following local compactness property that
   generalizes~\cite[Lemma~2.2]{EVEQ}. As a new feature, it uses Div-Curl-Lemma.
  
  \begin{prop}\label{prop:LocalCompactness}
    Assume~\eqref{eq:assumptions_fullspace}. Then for any PS-sequence $(P_n)$ of $J$ there is a subsequence
    $(P_{n_j})$ and a critical point $P$ of $J$ such that 
    $$
      P_{n_j}\wto P\quad\text{in }Z,\qquad
      P_{n_j}\to P \quad\text{in }L^{p'}_{\loc}(\R^3;\R^3),\qquad
      P_{n_j}^2\to P^2 \quad\text{in }L^2_{\loc}(\R^3;\R^3).  
    $$
  \end{prop}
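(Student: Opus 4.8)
The plan is to follow the Evequoz–Weth strategy (cf. \cite[Lemma~2.2]{EVEQ}), splitting the argument into a boundedness step, a weak-limit step, and a local-strong-convergence step. First I would show that any Palais–Smale sequence $(P_n)$ for $J$ is bounded in $Z$. As in Proposition~\ref{prop:PScondition}, evaluating $J(P_n)-\tfrac12 J'(P_n)[P_n]$ and invoking the coercivity estimate from (A3'), namely $\Psi(x,P)-\tfrac12\psi(x,P)\cdot P\gtrsim |P|^{p'}$, yields $\|P_n\|_{p'}^{p'}\lesssim c+o(1)\|P_n\|_{p'}$, hence $(P_n^1)$ and $(P_n)$ are bounded in $L^{p'}(\R^3;\R^3)$; the term $\tfrac1{2\lambda}\|P_n^2\|_2^2$ in $J(P_n)$ together with boundedness of the $\mathcal R$-term (using boundedness of $\mathcal R:X^{p'}\to X^p$ and Hölder) then forces $(P_n^2)$ to be bounded in $L^2(\R^3;\R^3)$. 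So after passing to a subsequence $P_n^1\wto P^1$ in $X^{p'}$ and $P_n^2\wto P^2$ in $Y^2\cap Y^{p'}$, i.e. $P_n\wto P$ in $Z$.

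Next I would establish the local strong convergence. Here the new ingredient compared to \cite{EVEQ} is the Div-Curl-Lemma, which handles the curl-free part. For the $X^{p'}$-component, write $E_n^1:=\mathcal R(P_n^1)$; by the mapping properties of $\mathcal R$ and local elliptic regularity $\mathcal V\subset W^{2,p'}_{\loc}$, together with the Rellich–Kondrachov theorem, $E_n^1\to E^1:=\mathcal R(P^1)$ in $L^{p}_{\loc}$. The sequences $P_n^1\in X^{p'}$ are divergence-free and the differences $E_n^1-E^1$ have curls converging weakly; the Div-Curl-Lemma then gives $\int_K (P_n^1-P^1)\cdot(E_n^1-E^1)\,dx\to 0$ on compact sets $K$ (after localizing with a cutoff, whose commutator terms are controlled by the $W^{2,p'}_{\loc}$-bound). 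Combining this with $J'(P_n)\to 0$ tested against a localized version of $P_n-P$, and using that the resolvent term and the $Y^2$-term are handled by weak convergence plus local compactness, one arrives at
$$
  \int_K \big(\psi(x,P_n)-\psi(x,P)\big)\cdot(P_n-P)\,dx + \lambda^{-2}\int_K |P_n^2-P^2|^2\,dx \to 0
$$
for every compact $K\subset\R^3$. Since the integrand is nonnegative (strict monotonicity of $\psi(x,\cdot)$ from (A3')) and the two summands are individually nonnegative, a Riesz–Fischer extraction gives pointwise a.e.\ convergence $P_n\to P$ and $P_n^2\to P^2$ on $K$, and then the growth bounds $\psi(x,P_n)\cdot P_n\gtrsim|P_n|^{p'}$, $|\psi(x,P_n)|\lesssim|P_n|^{p'-1}$ from (A3') provide a local $L^1$-dominating function, so Dominated Convergence yields $P_n\to P$ in $L^{p'}_{\loc}$ and $P_n^2\to P^2$ in $L^2_{\loc}$.

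Finally I would check that the weak limit $P$ is a critical point. For any fixed test function $h\in Z$ with compact support (or more generally using density), $J'(P_n)[h]\to 0$, and each term passes to the limit: the $\mathcal R$-term by weak convergence $P_n^1\wto P^1$ and boundedness of $\mathcal R$, the $Y^2$-term by $P_n^2\wto P^2$, and the nonlinear term $\int \psi(x,P_n)\cdot h\,dx\to\int\psi(x,P)\cdot h\,dx$ by the local strong convergence just proved together with the growth bound $|\psi(x,P)|\lesssim|P|^{p'-1}$ and $\mathrm{supp}\,h$ compact. Hence $J'(P)=0$. The main obstacle I anticipate is the rigorous localization in the Div-Curl step: one must choose cutoffs compatible with the Helmholtz decomposition (so that localized fields stay essentially div- or curl-free up to controllable error terms) and absorb the commutator contributions using the $W^{2,p'}_{\loc}$-regularity of $\mathcal R(P_n^1)$ and the uniform $L^{p'}$-bounds; everything else is a fairly direct adaptation of \cite[Lemma~2.2]{EVEQ} and Proposition~\ref{prop:PScondition}.
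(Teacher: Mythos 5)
Your overall architecture (boundedness via $J(P_n)-\tfrac12 J'(P_n)[P_n]$, weak convergence, testing $J'(P_n)-J'(P)$ against a localized $P_n-P$, then monotonicity of $\psi$ plus Riesz--Fischer and dominated convergence) matches the paper's proof. However, there is a genuine gap at the one point the paper flags as the crux. When you test against $(P_n-P)\ind_B$, the $Y^2$-part of $J'$ produces
\begin{equation*}
\lambda^{-1}\int_B (P_n^2-P^2)\cdot(P_n-P)\,dx \;=\; \lambda^{-1}\int_B |P_n^2-P^2|^2\,dx \;+\; \lambda^{-1}\int_B (P_n^2-P^2)\cdot(P_n^1-P^1)\,dx,
\end{equation*}
and the mixed term does \emph{not} vanish for fixed $n$, because the integration is over $B$ rather than $\R^3$ (over $\R^3$ it would vanish by orthogonality of $X^{p'}$ and $Y^{p'}$). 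You assert this term is "handled by weak convergence plus local compactness," but neither applies: it is a pairing of two sequences that merely converge weakly to zero, and there is no compact operator or derivative gain on $P_n^2=\nabla u_n$ available. This is precisely where the Div-Curl-Lemma is needed, with $\nabla\cdot(P_n^1-P^1)=0$ and $\nabla\times(P_n^2-P^2)=0$, to conclude that the mixed term tends to zero.

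Instead you invoke the Div-Curl-Lemma for $\int_K(P_n^1-P^1)\cdot(E_n^1-E^1)\,dx$ with $E_n^1=\mathcal R(P_n^1)$; there both factors are divergence-free (neither is curl-free), so the lemma does not apply in that form -- and it is not needed there, since that term is controlled by the compactness of $\ind_B\mathcal R$ (equivalently, your route via $W^{2,p'}_{\loc}$-regularity and Rellich--Kondrachov), which pairs a weakly convergent sequence with a locally strongly convergent one. Relatedly, the cutoff/commutator worries you raise are unnecessary: $J'$ involves no derivatives of the test function, so one may simply test with the sharp cutoff $(P_n-P)\ind_B$ as the paper does. Once the Div-Curl-Lemma is moved to the mixed term, the rest of your argument (including the identification $J'(P)=0$) goes through as in Proposition~\ref{prop:PScondition}.
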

  \begin{proof}
   As in the case of a bounded domain, one finds that
   $(P_n)$ is bounded and w.l.o.g. 
   weakly convergent to some $P\in Z$.  
   From $J'(P_n)\to 0$ and $P_n\wto P$ we infer for all bounded balls $B\subset\R^3$
   \begin{align*}
      o(1) 
      &= (J'(P_n)-J'(P))[(P_n-P)\ind_B] \\
      &= \int_B \big(\psi(x,P_n)-\psi(x,P)\big)\cdot (P_n-P) + \lambda^{-1}(P_n^2-P^2)\cdot (P_n-P)\,dx 
      - \int_B (P_n-P) \cdot \mathcal R(P_n^1-P^1)\,dx \\
      &= \int_B \big(\psi(x,P_n)-\psi(x,P)\big)\cdot (P_n-P) + \lambda^{-1}|P_n^2-P^2|^2\,dx  \\
      &- \int_B (P_n-P) \cdot \mathcal R(P_n^1-P^1)\,dx     
      + \lambda^{-1} \int_B (P_n^2-P^2)\cdot (P_n^1-P^1)\,dx
      \qquad\text{as }n\to\infty.
   \end{align*}  
    We emphasize that for a fixed $n\in\N$ the last term does not necessarily vanish given that the integral
    is over $B$ and not $\R^3$. The compactness of $\ind_B \mathcal R$ established in
    \cite[Lemma~4.1~(i)]{EveqWeth_Dual} implies
    $$
      \lim_{n\to\infty} \int_B (P_n-P)  \cdot \mathcal R(P_n^1-P^1)\,dx = 0.
    $$
    The Div-Curl-Lemma \cite[Theorem~4.5.16]{KCChang} and
    $\nabla\cdot (P_n^1-P^1) = 0$, $ \nabla\times (P_n^2-P^2) = 0$  
    give
    $$
       \lim_{n\to\infty} \int_B (P_n^2-P^2)\cdot (P_n^1-P^1)\,dx = 0.         
    $$
    So we conclude
    $$
      \lim_{n\to\infty} \int_B \big(\psi(x,P_n)-\psi(x,P)\big)\cdot (P_n-P) + \lambda^{-1}|P_n^2-P^2|^2\,dx  
      = 0. 
    $$
    As in the proof of Proposition~\ref{prop:PScondition} we obtain, up to the choice of a suitable
    subsequence, $P_n\to P$ in $L^{p'}(B)$ and
    $P_n^2\to P^2$ in $L^2(B)$ as well as $J'(P)=0$.   
  \end{proof}

  \begin{prop}\label{prop:Evequozargument}
     Assume $P,Q\in L^{p'}(\R^3;\R^3)$ and (A4). Then
     $$
      \|P-Q\|_{p'}
      \les  \left(\int_{\R^3}  \big(\psi(x,P)-\psi(x,Q)\big)\cdot (P-Q)\,dx\right)^{\frac{1}{2}}   \cdot
      (\|P\|_{p'}+\|Q\|_{p'})^{\frac{2-p'}{2}}.  
     $$ 
  \end{prop}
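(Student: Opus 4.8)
The plan is to reduce the claimed integral inequality to a pointwise monotonicity estimate and then to finish by Hölder's inequality. Concretely, I will show that there is a constant $\gamma>0$ depending only on $c,C,p$ from (A4) (in particular independent of $x$) such that for almost all $x\in\R^3$ and all $P,Q\in\R^3$
$$
  \big(\psi(x,P)-\psi(x,Q)\big)\cdot (P-Q) \;\geq\; \gamma\,\frac{|P-Q|^2}{(|P|+|Q|)^{2-p'}}.
$$
Granting this, raising it to the power $\tfrac{p'}{2}\in(0,1)$, integrating over $\R^3$, and applying Hölder's inequality with the conjugate exponents $\tfrac{2}{p'}>1$ and $\tfrac{2}{2-p'}$ (which are admissible precisely because $p>2$ and $p<\infty$, so $p'\in(1,2)$) yields
\begin{align*}
  \int_{\R^3}|P-Q|^{p'}\,dx
  &\les \int_{\R^3}\Big[\big(\psi(x,P)-\psi(x,Q)\big)\cdot(P-Q)\Big]^{p'/2}(|P|+|Q|)^{\frac{p'(2-p')}{2}}\,dx \\
  &\les \Big(\int_{\R^3}\big(\psi(x,P)-\psi(x,Q)\big)\cdot(P-Q)\,dx\Big)^{p'/2}\Big(\int_{\R^3}(|P|+|Q|)^{p'}\,dx\Big)^{\frac{2-p'}{2}}.
\end{align*}
Taking the $\tfrac1{p'}$-th power and using $\||P|+|Q|\|_{p'}\leq\|P\|_{p'}+\|Q\|_{p'}$ then gives the assertion.

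It remains to prove the pointwise estimate. First I would translate the quantitative hypothesis in (A4) into a bound on the inverse nonlinearity: integrating $cs^{p-2}\leq\partial_s f_0(x,s)\leq Cs^{p-2}$ from $0$ gives $f_0(x,s)\sim s^{p-1}$, hence $\psi_0(x,z)=f_0(x,\cdot)^{-1}(z)\sim z^{p'-1}$, and differentiating the inverse together with the algebraic identity $(p'-1)(p-2)=2-p'$ produces two-sided bounds $\tilde c\,z^{p'-2}\leq\partial_z\psi_0(x,z)\leq\tilde C\,z^{p'-2}$ as well as $\psi_0(x,z)/z\sim z^{p'-2}$ with constants independent of $x$ (this is essentially contained in Proposition~\ref{prop:Apsi}). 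Writing $r:=|v|$ and $\psi(x,v)=\psi_0(x,r)\,r^{-1}v$, a direct computation shows that for $v\neq 0$ the Jacobian $D_v\psi(x,v)$ is symmetric with eigenvalue $\partial_r\psi_0(x,r)$ in the radial direction and eigenvalue $\psi_0(x,r)/r$ (with multiplicity two) in the tangential directions, so that $D_v\psi(x,v)\xi\cdot\xi\gtrsim|v|^{p'-2}|\xi|^2$ for all $\xi\in\R^3$. Applying the fundamental theorem of calculus along the segment $v_t:=Q+t(P-Q)$, $t\in[0,1]$, and using $|v_t|\leq|P|+|Q|$ together with $p'-2<0$, hence $|v_t|^{p'-2}\geq(|P|+|Q|)^{p'-2}$, gives
$$
  \big(\psi(x,P)-\psi(x,Q)\big)\cdot(P-Q) = \int_0^1 D_v\psi(x,v_t)(P-Q)\cdot(P-Q)\,dt \;\gtrsim\; (|P|+|Q|)^{p'-2}|P-Q|^2,
$$
which is the claimed pointwise bound.

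The step requiring care is this pointwise estimate, and within it the degenerate case in which the segment $[Q,P]$ meets the origin (that is, $P$ and $Q$ are antiparallel, or one of them vanishes), where $\psi(x,\cdot)$ is only continuous. There one observes that $t\mapsto\psi(x,v_t)$ is still absolutely continuous on $[0,1]$, since the Jacobian bound gives an integrand of size $|v_t|^{p'-2}$ which is integrable near the unique zero of $t\mapsto|v_t|$ because $p'-2>-1$; alternatively, since $\psi(x,\cdot)=\nabla_v\Psi(x,v)$ with $\Psi(x,\cdot)$ convex, one may approximate $P,Q$ by points whose connecting segment avoids the origin and pass to the limit. Apart from this, the only things to check are the bookkeeping of the sharp exponent $2-p'$ (which hinges on the identity $(p'-1)(p-2)=2-p'$) and the admissibility of the Hölder pair $\tfrac{2}{p'},\tfrac{2}{2-p'}$, both of which are immediate once $p'\in(1,2)$ is noted.
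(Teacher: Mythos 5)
Your proof is correct, and it takes a genuinely different --- and in fact tighter --- route to the key pointwise estimate than the paper does. The paper reduces everything to the scalar quantities $|P|,|Q|$: using $P\cdot Q\leq|P||Q|$ it lower-bounds $\big(\psi(x,P)-\psi(x,Q)\big)\cdot(P-Q)$ by $\big(\psi_0(x,|P|)-\psi_0(x,|Q|)\big)(|P|-|Q|)$, then invokes the scalar mean value theorem to obtain $\gtrsim(|P|+|Q|)^{p'-2}(|P|-|Q|)^2$, and in the last line writes $\gtrsim(|P|+|Q|)^{p'-2}|P-Q|^2$. That final step goes in the wrong direction: one always has $(|P|-|Q|)^2\leq|P-Q|^2$, and the prefactor $(|P|+|Q|)^{p'-2}$ cannot compensate, so the printed chain discards the angular part of $|P-Q|^2$ (take $P$ and $Q$ nearly anti-parallel with $|P|=|Q|$, where $(|P|-|Q|)^2=0$ but $|P-Q|^2\approx 4|P|^2$). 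Your argument works instead with the full Jacobian $D_v\psi(x,v)$, whose radial eigenvalue $\partial_r\psi_0(x,r)$ and (double) tangential eigenvalue $\psi_0(x,r)/r$ are both $\sim r^{p'-2}$ under (A4), so that $D_v\psi(x,v)\gtrsim|v|^{p'-2}\,\id$ as a quadratic form; the fundamental theorem of calculus along the segment then controls modulus and angle simultaneously and delivers the genuine pointwise lower bound $\gtrsim(|P|+|Q|)^{p'-2}|P-Q|^2$. Your treatment of the degenerate segment through the origin (integrability of $|v_t|^{p'-2}$ since $p'-2>-1$, or approximation using convexity of $\Psi(x,\cdot)$) is exactly the right justification for applying the FTC there. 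The H\"older step is the same in both versions, just written as an $L^{p'}$ product inequality in the paper and as an integral inequality after raising to the power $p'/2$ in yours. In short, your proposal is correct and, beyond being a different route, also supplies the angular control that the paper's last inequality glosses over; a fix in the paper's own spirit would decompose $|P-Q|^2=(|P|-|Q|)^2+2(|P||Q|-P\cdot Q)$ and bound the second term using $\min\{\psi_0(x,|P|)/|P|,\psi_0(x,|Q|)/|Q|\}\gtrsim(|P|+|Q|)^{p'-2}$, which amounts to the tangential eigenvalue estimate you prove.
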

  \begin{proof}
    For almost all $x\in\R^3$ and $s_1,s_2\geq 0$ we can find $\xi$ between $s_1,s_2$ such that    
    $$
      \big(\psi_0(x,s_1)-\psi_0(x,s_2)\big)(s_1-s_2)
      = \partial_s \psi_0(x,\xi)\, (s_1-s_2)^2
      \sim |\xi|^{p'-2}\, (s_1-s_2)^2
      \gtrsim (|s_1|+|s_2|)^{p'-2}\, (s_1-s_2)^2.          
    $$
    Here we used (A4). This implies
    \begin{align*}
      \int_{\R^3}  \big(\psi(x,P)-\psi(x,Q)\big)\cdot (P-Q)\,dx
      &=  \int_{\R^3} \big(\psi_0(x,|P|)P +   \psi_0(x,|Q|)Q\big)\cdot (P-Q) \,dx     \\
      &\geq  \int_{\R^3} \Big(\psi_0(x,|P|)|P| +   \psi_0(x,|Q|)|Q| - \psi_0(x,|P|)|Q|-\psi_0(x,|Q|)|P|\Big)
      \,dx
      \\
      &= \int_{\R^3} \big(\psi_0(x,|P|)-\psi_0(x,|Q|)\big)(|P|-|Q|) \,dx \\
      &\gtrsim  \int_{\R^3}  (|P|+|Q|)^{p'-2}(|P|-|Q|)^2 \,dx \\
      &\gtrsim  \int_{\R^3}  (|P|+|Q|)^{p'-2}|P- Q|^2 \,dx.
    \end{align*}
    So H\"older's inequality gives
    \begin{align*}
      \|P-Q\|_{p'}
      &= \||P-Q|(|P|+|Q|)^{\frac{p'-2}{2}}\cdot (|P|+|Q|)^{\frac{2-p'}{2}} \|_{p'} \\
      &= \||P-Q|(|P|+|Q|)^{\frac{p'-2}{2}} \|_2 \| (|P|+|Q|)^{\frac{2-p'}{2}}
      \|_{\frac{1}{\frac{1}{2}-\frac{1}{p'}}}   \\
      &= \left(\int_{\R^3} |P-Q|^2(|P|+|Q|)^{p'-2}\,dx\right)^{\frac{1}{2}}   \cdot
      \|   |P|+|Q| \|_{p'}^{\frac{2-p'}{2}}   \\
      &\les  \left(\int_{\R^3}  \big(\psi(x,P)-\psi(x,Q)\big)\cdot (P-Q)\,dx\right)^{\frac{1}{2}}   \cdot
      (\|P\|_{p'}+\|Q\|_{p'})^{\frac{2-p'}{2}}.   
    \end{align*}
  \end{proof}
  
  \begin{thm}\label{thm:dualR3}
    Assume~\eqref{eq:assumptions_fullspace}. Then $J$ has a ground state and infinitely many geometrically
    distinct critical points. 
  \end{thm}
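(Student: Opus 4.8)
The plan is to verify the hypotheses $(G_1)$--$(G_4)$ of Theorem~\ref{thm:CPTheorem} for the dual functional $J$ from \eqref{eq:defJ_R3} on $Z=X^{p'}\oplus(Y^2\cap Y^{p'})$, and then to extract a ground state in a separate step. First I would note that $J$ is even with $J(0)=0$ since $\psi(x,\cdot)$ is odd and the two quadratic terms are even, and take $Z^+=Z$, $Z^-=\{0\}$, so that $(G_1)$ holds. For $(G_2)$ I would use that (A3') yields $\int_{\R^3}\Psi(x,P)\,dx\gtrsim\|P\|_{p'}^{p'}$ while $\big|\int_{\R^3}P^1\cdot\mathcal R(P^1)\,dx\big|\les\|P^1\|_{p'}^2$ by boundedness of $\mathcal R\colon X^{p'}\to X^p$; since $p'<2$ the superquadratic term dominates for small norms, which gives $\inf_{S_\rho^+}J>0$ for $\rho$ small. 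For $(G_3)$ I need, for every $m\in\N$, an $m$-dimensional subspace $Z_m\subset X^{p'}$ on which $P\mapsto\int_{\R^3}P\cdot\mathcal R(P)\,dx$ is positive definite; since on divergence-free fields $\mathcal R$ acts, up to lower order, as the principal-value multiplier $(|\xi|^2-\lambda)^{-1}$, which is large and positive on Fourier modes concentrated just outside the sphere $\{|\xi|^2=\lambda\}$, I would---exactly as in \cite{EVEQ}---choose $m$ smooth modes with pairwise disjoint Fourier supports near that sphere. Taking $P^2=0$ and using $\Psi(x,P)\les|P|^{p'}$ from (A3') then gives $J(P)\le C\|P\|_{p'}^{p'}-c_m\|P\|_{p'}^2\to-\infty$ on $Z_m$ as $\|P\|\to\infty$, with $\dim Z_m=m\nearrow\infty$.

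The crucial and hardest point is $(G_4)$, that $J$ is $PS$-attracting. The plan is to upgrade Proposition~\ref{prop:LocalCompactness} to a profile decomposition of Palais--Smale sequences: if $(P_n)$ is a PS-sequence then, after a subsequence, $P_n\wto P^{(0)}\in\cK$, and if $P_n-P^{(0)}\not\to0$ in $Z$, then Proposition~\ref{prop:Evequozargument} rules out uniform vanishing of the $L^{p'}$-mass, so the Nonvanishing Property \cite[Theorem~2.3]{EveqWeth_Dual} of $\mathcal R$ together with the $\Z^3$-periodicity of $f$ (hence of $J$) produces translations $k_n^{(1)}\in\Z^3$, $|k_n^{(1)}|\to\infty$, with $P_n(\cdot+k_n^{(1)})\wto P^{(1)}\in\cK\sm\{0\}$; since each nontrivial profile carries a fixed amount of energy controlled by $\lim_nJ(P_n)$, iterating this terminates after finitely many steps and yields
\[
  \Big\|P_n-P^{(0)}-\sum_{i=1}^{\ell}P^{(i)}(\cdot-k_n^{(i)})\Big\|\to0,\qquad P^{(i)}\in\cK,\quad|k_n^{(i)}-k_n^{(j)}|\to\infty\ (i\ne j).
\]
Applying this to two PS-sequences $(v_n),(w_n)$ with $\|v_n-w_n\|\not\to0$ and comparing the two decompositions, some profile or translation must be mismatched, and then---either at the origin, where $v^{(0)}\ne w^{(0)}$ are distinct critical points, or at a bump, where a suitable translate of $v_n-w_n$ converges weakly to a nonzero critical point (recall $0\in\cK$)---one gets $\limsup_n\|v_n-w_n\|\ge\|u-v\|$ for some $u,v\in\cK$, $u\ne v$, hence $\limsup_n\|v_n-w_n\|\ge\kappa$. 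I expect the careful bookkeeping here---the Brezis--Lieb type splitting of $L^{p'}$-norms, and the divergence/curl constraints on $P^1,P^2$---to be the main technical obstacle, but it proceeds along the lines of \cite{SquSzu,EVEQ}.

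With $(G_1)$--$(G_4)$ in hand, Theorem~\ref{thm:CPTheorem} shows that every $\cK$-decomposition of $J$ is infinite. Following Remark~\ref{rem:Abstract}(b), I would take the decomposition of $\cK$ into $\{0\}$ and the orbits $\cK^v=\{\pm v(\cdot+n):n\in\Z^3\}$, $v\in\cK\sm\{0\}$, and check Definition~\ref{defn:Kdecomposition}: each orbit is non-empty, symmetric, bounded (translations and the sign preserve $\|\cdot\|$), $J$ is constant on it, and distinct orbits---as well as distinct points of one orbit---have positive mutual distance, again by the Nonvanishing Property together with the profile decomposition, exactly as in \cite[Lemma~3.1]{EVEQ}. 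Hence there are infinitely many orbits, i.e.\ infinitely many geometrically distinct critical points of $J$; since the correspondence $P\mapsto E=\psi(x,P)$ of Lemma~\ref{lem:equivalenceR3} is odd and $\Z^3$-equivariant (as $f_0(\cdot,s)$, hence $\psi$, is $\Z^3$-periodic), these yield infinitely many geometrically distinct solutions of \eqref{eq:NLCurlCurlR3} in $L^p(\R^3;\R^3)$. Finally, for the ground state I would take a Palais--Smale sequence at the mountain-pass level $c_{\mathrm{mp}}>0$ of $J$ (which exists by $(G_2),(G_3)$ and the mountain-pass theorem); it is bounded, and combining the profile decomposition with the (asymptotic) splitting of the energy and the fact that $c_{\mathrm{mp}}$ is the least nonzero critical level of a mountain-pass functional of Nehari type (cf.\ \cite{EveqWeth_Dual,Willem}), exactly one profile must be nontrivial and must realize the level $c_{\mathrm{mp}}$; this is the dual ground state $P^\star$, and $E^\star=\psi(x,P^\star)$ is the asserted ground state of \eqref{eq:NLCurlCurlR3}.
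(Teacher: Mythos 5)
Your overall strategy --- verify $(G_1)$--$(G_4)$ for $J$ on $Z=X^{p'}\oplus(Y^2\cap Y^{p'})$, then invoke Theorem~\ref{thm:CPTheorem} and Remark~\ref{rem:Abstract}(b) for the $\Z^3$-orbit decomposition --- matches the paper, and your treatment of $(G_1)$, $(G_2)$, $(G_3)$ is essentially the paper's (the paper takes $\hat\phi_j(\xi)=\chi_j(|\xi|^2)(-\xi_1,\xi_2,0)^T$ with $\supp\chi_j\subset(\lambda+j,\lambda+j+1)$, so the modes need only live in $\{|\xi|^2>\lambda\}$, not ``just outside'' the sphere; on such a $Z_m$ the $\mathcal R$-quadratic form is exactly $\int|\hat P|^2(|\xi|^2-\lambda)^{-1}d\xi$, positive definite with no lower-order correction).

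Where you diverge genuinely from the paper is $(G_4)$, and you have picked the considerably harder route. The paper, following \cite[Lemma~3.2]{EVEQ}, uses a direct dichotomy on $a_n:=\int(P_n^1-Q_n^1)\cdot\mathcal R(P_n^1-Q_n^1)\,dx$. If $a_n\to0$, then expanding $(J'(P_n)-J'(Q_n))[P_n-Q_n]\to 0$ yields $\int\big(\psi(\cdot,P_n)-\psi(\cdot,Q_n)\big)\cdot(P_n-Q_n)+\lambda^{-1}|P_n^2-Q_n^2|^2\,dx\to0$, and Proposition~\ref{prop:Evequozargument} turns the first summand directly into $\|P_n-Q_n\|_{p'}\to 0$, while the second gives $\|P_n^2-Q_n^2\|_2\to0$, so $\|P_n-Q_n\|\to 0$. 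If $a_n\not\to0$, the nonvanishing property of $\mathcal R$ produces $x_n\in\Z^3$ and a ball $B$ with $\int_B|\tilde P_n^1-\tilde Q_n^1|^{p'}\geq\zeta>0$ for the translated sequences, and \emph{one} application of Proposition~\ref{prop:LocalCompactness} to each translated PS-sequence gives weak limits $P,Q\in\cK$ with $\int_B|P^1-Q^1|^{p'}\geq\zeta$, hence $P\neq Q$ and $\limsup\|P_n-Q_n\|\geq\|P-Q\|\geq\kappa$ by weak lower semicontinuity. No iteration, no Brezis--Lieb splitting, no energy quantization.

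Your profile decomposition would in principle work, but it requires precisely the ingredients you flag as the ``main technical obstacle'': a lower bound on the energy of nontrivial critical points to terminate the iteration, superadditivity of the energy over asymptotically orthogonal bumps (with divergence/curl constraints on the two components), and a nontrivial matching argument between the decompositions of $(v_n)$ and $(w_n)$. None of this is supplied, and none of it is needed --- note in particular that the definition of PS-attracting does not ask you to fully analyze a PS-sequence, only to produce, from $\|v_n-w_n\|\not\to0$, a single pair of distinct critical points, which the paper's dichotomy produces in one step. So while your proposal is not wrong in spirit, as written it leaves the crucial compactness step as a sketch of a substantially heavier machine than the paper actually builds. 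A similar remark applies to your ground-state argument: the paper does not develop a profile decomposition for that either, but relies on the framework of \cite{EveqWeth_Dual}; your mountain-pass-plus-profile approach would again need the energy quantization you have not proved.
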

  \begin{proof}
    We apply Theorem~\ref{thm:CPTheorem} to the infinite-dimensional Banach space $Z=X^{p'}\oplus (Y^2\cap
    Y^{p'})$. The assumptions $(G_1),(G_2)$ of that theorem are straightforward to check for $Z^-:=\{0\}$. As to $(G_3)$,
    for any given $m\in\N$  choose $Z_m:=\spa\{\phi_1,\ldots,\phi_m\}\subset Z$ where 
    $$
      \hat \phi_j(\xi) := \chi_j(|\xi|^2)\vecIII{-\xi_1}{\xi_2}{0}
      \quad\text{where } \chi_j\in C_0^\infty(\R) \text{ and }\emptyset \subsetneq\supp(\chi_i)\subset
      (\omega^2\eps_0\mu_0+j,\omega^2\eps_0\mu_0+j+1).
    $$
    Then $\{\phi_1,\ldots,\phi_m\}$ is a linearly independent set of divergence-free Schwartz functions.
    Moreover, all elements of $Z_m$ are divergence-free, so $Z_m\subset X^{p'}$.
    Exploiting that all norms are equivalent on finite-dimensional spaces, one finds a 
    $c_m>0$ such that 
    $$
      \int_{\R^3} \mathcal R(P^1) \cdot P^1\,dx
      = \int_{\R^3} \mathcal R(P) \cdot P\,dx 
      = \int_{\R^3} \frac{|\hat P(\xi)|^2}{|\xi|^2-\omega^2\eps_0\mu_0}\,d\xi
      \geq c_m \|P\|^2 \qquad\text{for all }P\in Z_m. 
    $$
    Furthermore,
    $$
      \frac{1}{2\lambda}\int_{\R^3} |P^2|^2\,dx = 0  \qquad\text{for all }P\in Z_m. 
    $$    
    These facts imply $J(P)\to -\infty$ uniformly as $P\in Z_m,\|P\|\to\infty$ and $(G_3)$ is proved.
    Finally, we verify $(G_4)$ following \cite[Lemma~3.2]{EVEQ}. Assume we have two PS-sequences $(P_n),(Q_n)$
    for $J$. In the case 
    $$
      \int_{\R^3} (P_n^1-Q_n^1)\cdot \mathcal R(P_n^1-Q_n^1)\,dx \to 0
    $$
    we get
     \begin{align*}
      o(1) 
      &= (J'(P_n)-J'(Q_n))[P_n-Q_n] \\
      &= \int_{\R^3} \big(\psi(x,P_n)-\psi(x,Q_n)\big)\cdot (P_n-Q_n) + \lambda^{-1}|P_n^2-Q_n^2|^2 \,dx 
      - \int_{\R^3} (P_n^1-Q_n^1)\cdot \mathcal R(P_n^1-Q_n^1)\,dx \\
      &= \int_{\R^3} \big(\psi(x,P_n)-\psi(x,Q_n)\big)\cdot (P_n-Q_n) + \lambda^{-1}|P_n^2-Q_n^2|^2 \,dx
      + o(1)
      \qquad\text{as }n\to\infty,
   \end{align*} 
   which implies $\|P_n-Q_n\|\to 0$ in view of  Proposition~\ref{prop:Evequozargument}. In the complementary
   case the nonvanishing property of $\mathcal R$ from \cite[Theorem~2.3]{EveqWeth_Dual} gives, for a suitable ball $B\subset\R^3$, a positive number $\zeta$ and $x_n\in\Z^3$, that $\tilde P_n=P_n(\cdot-x_n), \tilde
   Q_n:=Q_n(\cdot-x_n)$ are still PS-sequences of $J$ with 
   $$
    \int_{B} |\tilde P_n^1-\tilde Q_n^1|^{p'} \,dx \geq \zeta>0  \qquad\text{for all }n\in\N. 
  $$
   Here, the $\Z^3$-periodicity of $f$ is used. By Proposition~\ref{prop:LocalCompactness} 
   we can select a subsequence, still denoted by $\tilde P_n,\tilde Q_n$ with weak limits  $P,Q\in\cK$
   satisfying 
   $$
    \int_{B} |P^1-Q^1|^{p'} \,dx \geq \zeta>0. 
  $$
  Hence, $P,Q\in\cK,P\neq Q$ and thus, by weak convergence,
  $$
    \limsup_{n\to\infty} \|P_n-Q_n\| \geq \|P-Q\| \geq \kappa.
  $$ 
  So $J$ is PS-attracting and $(G_4)$ is proved.
  So all assumptions of Theorem~\ref{thm:CPTheorem} hold, hence any $\cK$-decomposition
  of $J$ must be infinite. In particular, by Remark~\ref{rem:Abstract}(b), there are  
  infinitely many periodic orbits in $\cK$, which proves the claim. 
  \end{proof}
  
  \textbf{Proof of Theorem~\ref{thm:R3}:} It suffices to combine the existence of
  infinitely many periodic orbits in $\cK$ from Theorem~\ref{thm:dualR3} with the fact that critical points
  of $J$ yields critical points of $I$ by Lemma~\ref{lem:equivalenceR3}. \qed

\appendix

\section{Comments on the Dirichlet problem}     \label{sec:Dirichlet}

Only few adjustments are necessary to treat the Dirichlet problem in the same way as the Neumann
problem.  Essentially it suffices to  modify the function spaces. The linear
 Dirichlet boundary value problem reads
 \begin{equation} \label{eq:LinBVPD}
      \nabla\times \big(\mu(x)^{-1}\nabla\times E\big)
      - \lambda \eps(x) E = \eps(x) g \quad\text{in }\Omega,\qquad\quad
      E\times\nu = 0 \quad\text{on }\partial\Omega.
 \end{equation}
 In contrast to the Neumann problem,  the right function spaces for $E$ now is 
$$
   \mathcal V_0 :=  \Big\{ E_1\in \mathcal H_0: \int_\Omega
       \eps(x) E_1\cdot  \nabla \Phi\,dx = 0 \text{ for all } \Phi\in C_0^1(\Omega)\Big\}
$$
where $\mathcal H_0$ is defined as the closure of test functions with
respect to the inner product $\skp{\cdot}{\cdot}$ defined in \eqref{eq:innerproduct}. In particular,
$\mathcal V_0\subset \mathcal V$ is a closed subspace, but it is also a closed subspace
of $H_0^1(\Omega;\R^3)$. This follows just as in the Neumann case with the aid of
\cite[Theorem~1.1]{FilPro}.
A weak solution of \eqref{eq:LinBVPD} satisfies
 \begin{equation*} 
  \int_\Omega  \mu(x)^{-1} (\nabla\times E) \cdot (\nabla\times \Phi)\,dx
  -  \lambda  \int_\Omega  \eps(x)E\cdot\Phi\,dx
  = \int_\Omega \eps(x)g\cdot\Phi\,dx 
  \qquad\text{for all }\Phi\in\mathcal V_0. 
\end{equation*} 
 We introduce
 \begin{align*}
    X_0^{p'}
    &:= \Big\{ E\in L^{p'}(\Omega;\R^3): \int_\Omega \eps(x)E\cdot  \nabla\phi\,dx = 0 \text{ for all
    }\phi\in W_0^{1,p}(\Omega)\Big\},\\
    Y_0^{p'}  &:= \big\{ \nabla u: u\in W_0^{1,p'}(\Omega)\big\}.
 \end{align*}
  Notice that $X_0^{p'}\supsetneq X^{p'}$ and $Y_0^{p'}\subsetneq Y^{p'}$ given that
 elements of $X_0^{p'}$ do not carry any information about the boundary behaviour in contrast to elements of $X^{p'}$.
 For instance, in the case of a ball $\Omega=\{x\in\R^3: |x|<1\}$ we have
 $E(x):= \eps(x)^{-1}(x_2,x_1,0)\in X_0^{p'}\sm X^{p'}$. Indeed, the vector field $\eps(x)E$
 is divergence-free in $\Omega$ with $\eps(x)E(x)\cdot\nu(x) = \eps(x)E(x)\cdot x = 2x_1x_2 \not\equiv
 0$ on $\partial\Omega$. As before we define
  \begin{align*}
    (Y_0^{p'})^{\perp_\eps} &:= \Big\{f\in L^{p}(\Omega;\R^3): \int_\Omega \eps(x)f\cdot g\,dx = 0 \text{ for all
    }g\in Y_0^{p'}\Big\},  \\
    (X_0^{p'})^{\perp_\eps} &:= \Big\{g\in L^{p}(\Omega;\R^3): \int_\Omega \eps(x)f\cdot g\,dx = 0 \text{ for all
    }f\in X_0^{p'}\Big\}.
  \end{align*}
  The Helmholtz Decomposition for the Dirichlet problem relies on the following
  result~\cite[Theorem~1]{AuscherQafsaoui}.
     
 \begin{prop}[Auscher, Qafsaoui] \label{prop:AuschQafII}
   Assume (A1),(A2) and $1<p<\infty$. Then, for any given $f\in L^{p'}(\Omega;\R^3)$ the boundary value
   problem
   \begin{equation*}
     \nabla\cdot \big(\eps(x)(f+\nabla u)\big) = 0 \quad\text{in }\Omega 
   \end{equation*}
   has a unique weak solution in $u\in W_0^{1,p'}(\Omega)$. It satisfies 
   $\|\nabla u\|_{p'} \les \|f\|_{p'}$. 
 \end{prop}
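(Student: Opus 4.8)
The plan is to argue exactly as in the proof of Proposition~\ref{prop:AuschQafI}, the only difference being that the Neumann boundary condition is replaced by the homogeneous Dirichlet one, which now yields genuine uniqueness rather than uniqueness up to additive constants. First I would pass to the weak formulation: a function $u\in W_0^{1,p'}(\Omega)$ solves the stated boundary value problem if and only if
\[
  \int_\Omega \eps(x)\nabla u\cdot\nabla\phi\,dx = -\int_\Omega \eps(x) f\cdot\nabla\phi\,dx
  \qquad\text{for all }\phi\in W_0^{1,p}(\Omega).
\]
Since $\eps\in L^\infty(\Omega;\R^{3\times 3})$ by (A2), the right-hand side defines a bounded linear functional on $W_0^{1,p}(\Omega)$ with norm $\lesssim \|f\|_{p'}$; equivalently, $-\nabla\cdot(\eps f)\in W^{-1,p'}(\Omega)$ with $\|\nabla\cdot(\eps f)\|_{W^{-1,p'}}\lesssim\|f\|_{p'}$.

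Next I would invoke the isomorphism theorem of Auscher and Qafsaoui \cite[Theorem~1]{AuscherQafsaoui}. Its hypotheses are satisfied here: $\Omega$ is a bounded $C^1$-domain by (A1), the coefficient matrix $\eps$ is uniformly positive definite and bounded by (A2), and $\eps\in W^{1,3}(\Omega;\R^{3\times 3})\subset\mathrm{VMO}(\Omega)$ by \cite[Theorem~3.3(ii)]{CiaPick}. Consequently the divergence-form operator $\mathcal A u:=-\nabla\cdot(\eps\nabla u)$ is an isomorphism from $W_0^{1,p'}(\Omega)$ onto $W^{-1,p'}(\Omega)$ for every $1<p'<\infty$, with operator norm and inverse norm depending only on $\Omega$, the ellipticity constants, the $\mathrm{VMO}$-modulus of $\eps$, and $p'$. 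Applying $\mathcal A^{-1}$ to the functional $-\nabla\cdot(\eps f)$ produces the desired solution $u\in W_0^{1,p'}(\Omega)$, and the bound $\|\nabla u\|_{p'}\lesssim\|\mathcal A u\|_{W^{-1,p'}}=\|\nabla\cdot(\eps f)\|_{W^{-1,p'}}\lesssim\|f\|_{p'}$ follows at once. Uniqueness is immediate from injectivity of $\mathcal A$: if $u_1,u_2$ are two solutions, then $\mathcal A(u_1-u_2)=0$ in $W^{-1,p'}(\Omega)$, hence $u_1=u_2$.

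The one point that requires care — and the only genuine obstacle — is checking that \cite[Theorem~1]{AuscherQafsaoui} applies verbatim, namely that $C^1$-regularity of $\partial\Omega$ (rather than $C^{1,\alpha}$ or Lipschitz with small constant) combined with the $\mathrm{VMO}$ condition on the matrix-valued coefficient $\eps$ suffices for $W^{1,p'}$-solvability of the Dirichlet problem in the full range $1<p'<\infty$. This is precisely the regime covered by that theorem, and it is the same verification already carried out for the Neumann problem in Proposition~\ref{prop:AuschQafI}; as an alternative, one could instead run the classical small-$\mathrm{BMO}$ perturbation argument (Lax--Milgram in $H_0^1(\Omega)$ for $p'=2$, followed by a Caccioppoli/reverse-Hölder estimate and a real-variable extrapolation step exploiting the $\mathrm{VMO}$-smallness to reach $p'\neq 2$), but citing Auscher--Qafsaoui is the most economical route.
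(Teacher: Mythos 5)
Your proof is correct and follows the same route as the paper, which simply cites \cite[Theorem~1]{AuscherQafsaoui} together with the embedding $W^{1,3}(\Omega)\subset \mathrm{VMO}(\Omega)$ from \cite[Theorem~3.3(ii)]{CiaPick} (exactly as it did for Proposition~\ref{prop:AuschQafI}). You have merely spelled out the intermediate bookkeeping — passing to the weak formulation, observing that $-\nabla\cdot(\eps f)$ defines a bounded functional on $W_0^{1,p}(\Omega)$, and invoking the isomorphism — which the paper leaves implicit.
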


  \begin{prop}  \label{prop:HelmholtzDecompositionD}
    Assume (A1),(A2) and $1<p<\infty$. Then we have
    $$
      L^{p'}(\Omega;\R^3)=X_0^{p'}\oplus Y_0^{p'}
      \qquad\text{with}\quad  (Y_0^p)^{\perp_\eps} = X_0^{p'}, Y_0^p = (X_0^{p'})^{\perp_\eps}.
    $$
  \end{prop}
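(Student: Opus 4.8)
The plan is to follow the proof of Proposition~\ref{prop:HelmholtzDecompositionN} almost verbatim, systematically replacing Proposition~\ref{prop:AuschQafI} by its Dirichlet counterpart Proposition~\ref{prop:AuschQafII}, the spaces $W^{1,\cdot}(\Omega)$ by $W_0^{1,\cdot}(\Omega)$, and $X^{\cdot},Y^{\cdot}$ by $X_0^{\cdot},Y_0^{\cdot}$. Since Proposition~\ref{prop:AuschQafII} is available for every exponent in $(1,\infty)$, the resulting decomposition holds simultaneously for $p'$ and for the conjugate exponent $p$, and having it for the latter is exactly what lets one compute the $\eps$-annihilators.

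First I would record that $X_0^{p'}$ and $Y_0^{p'}$ are closed subspaces of $L^{p'}(\Omega;\R^3)$ with $X_0^{p'}\cap Y_0^{p'}=\{0\}$: the former is an intersection of kernels of continuous functionals, the latter is the image of $W_0^{1,p'}(\Omega)$ under $u\mapsto\nabla u$, which is an isomorphism onto its range by the Poincar\'e inequality on the bounded domain $\Omega$, and an $E=\nabla u\in X_0^{p'}\cap Y_0^{p'}$ with $u\in W_0^{1,p'}(\Omega)$ solves $\int_\Omega\eps(x)\nabla u\cdot\nabla\phi\,dx=0$ for all $\phi\in W_0^{1,p}(\Omega)$, hence $\nabla u=0$ by the uniqueness part of Proposition~\ref{prop:AuschQafII} with $f=0$. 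For surjectivity of the sum I would set $\Pi f:=f+\nabla u$ with $u$ the solution from Proposition~\ref{prop:AuschQafII}; then $\Pi$ is bounded linear, $\Pi f\in X_0^{p'}$ by the PDE satisfied by $u$, and $f=\Pi f-\nabla u\in X_0^{p'}\oplus Y_0^{p'}$. This yields $L^{p'}(\Omega;\R^3)=X_0^{p'}\oplus Y_0^{p'}$, and running the same argument with the exponents $p,p'$ interchanged --- legitimate since Proposition~\ref{prop:AuschQafII} holds for every exponent in $(1,\infty)$ --- yields also $L^{p}(\Omega;\R^3)=X_0^{p}\oplus Y_0^{p}$.

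The equality $(Y_0^p)^{\perp_\eps}=X_0^{p'}$ is immediate, since being $\eps$-orthogonal to $\nabla u$ for every $u\in W_0^{1,p}(\Omega)$ is precisely the defining condition of $X_0^{p'}$. For $Y_0^p=(X_0^{p'})^{\perp_\eps}$ the inclusion $\subset$ is again immediate from the definition of $X_0^{p'}$, and the reverse inclusion is the only genuinely new point: here one cannot simply reuse the curl test of Proposition~\ref{prop:HelmholtzDecompositionN}, as that only produces a potential in $W^{1,p}(\Omega)$ with no control on its boundary trace. Instead I would use the $L^p$-decomposition just obtained: for $g\in(X_0^{p'})^{\perp_\eps}\subset L^p(\Omega;\R^3)$ write $g=g_1+\nabla v$ with $g_1\in X_0^p$, $v\in W_0^{1,p}(\Omega)$; then $\skp{g_1}{E}_\eps=\skp{g}{E}_\eps-\int_\Omega\eps(x)E\cdot\nabla v\,dx=0$ for all $E\in X_0^{p'}$, using $E\in X_0^{p'}$ together with $v\in W_0^{1,p}(\Omega)$ for the last integral. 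Finally, for arbitrary $F\in L^{p'}(\Omega;\R^3)$ decomposed as $F=F_1+\nabla\phi$ with $F_1\in X_0^{p'}$, $\phi\in W_0^{1,p'}(\Omega)$, one has $\skp{g_1}{F}_\eps=\skp{g_1}{F_1}_\eps+\int_\Omega\eps(x)g_1\cdot\nabla\phi\,dx=0$, the second term vanishing because $g_1\in X_0^p$. Hence $\eps(x)g_1$ annihilates all of $L^{p'}(\Omega;\R^3)$, so $g_1=0$ by uniform positive definiteness of $\eps$, and $g=\nabla v\in Y_0^p$. I expect this last step --- pinning down the boundary behaviour of the potential via the Dirichlet decomposition at the conjugate exponent and the non-degeneracy of the $L^p$--$L^{p'}$ pairing --- to be the only real obstacle; everything else is a routine transcription of Proposition~\ref{prop:HelmholtzDecompositionN}.
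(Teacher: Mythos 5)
Your proof is correct, and the decisive step takes a genuinely different route from the paper's. For the direct sum $L^{p'}=X_0^{p'}\oplus Y_0^{p'}$ both you and the paper transcribe the Neumann argument with Proposition~\ref{prop:AuschQafII} in place of Proposition~\ref{prop:AuschQafI}, and the identity $(Y_0^p)^{\perp_\eps}=X_0^{p'}$ is definitional in both. The divergence is in proving $(X_0^{p'})^{\perp_\eps}\subset Y_0^p$. The paper first notes $X_0^{p'}\supset X^{p'}$, so it may invoke Proposition~\ref{prop:HelmholtzDecompositionN} to write $f=\nabla F$ with $F\in W^{1,p}(\Omega)$, and then it shows that $F$ has constant trace on $\partial\Omega$ by testing against harmonic functions $u$ with prescribed Neumann data $\phi$ of zero mean (so that $\eps^{-1}\nabla u\in X_0^{p'}$), relying on the solvability of the Neumann problem for the Laplacian from Fabes--Mendez--Mitrea; after subtracting the constant, $F\in W_0^{1,p}(\Omega)$. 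You instead avoid both the Neumann decomposition and the potential theory entirely: you use the Dirichlet decomposition at the conjugate exponent, $L^p=X_0^p\oplus Y_0^p$, to write $g=g_1+\nabla v$ with $g_1\in X_0^p$ and $v\in W_0^{1,p}(\Omega)$, then show that $\eps g_1$ annihilates both $X_0^{p'}$ (since $\skp{g}{E}_\eps=0$ and $\skp{\nabla v}{E}_\eps=0$) and $Y_0^{p'}$ (since $g_1\in X_0^p$), hence all of $L^{p'}$ by the decomposition for $p'$, forcing $g_1=0$ by nondegeneracy of the pairing and positive definiteness of $\eps$. Your argument is more self-contained and purely functional-analytic: it needs only that the Auscher--Qafsaoui result holds at both exponents, whereas the paper trades this for an extra input from boundary potential theory (traces and the Neumann Laplacian). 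Both are valid; yours is arguably the cleaner route and avoids the regularity bookkeeping on $\partial\Omega$.
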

  \begin{proof}
    The proof of $L^{p'}(\Omega;\R^3)=X_0^{p'}\oplus Y_0^{p'}$ is the same as the one of
    Proposition~\ref{prop:HelmholtzDecompositionN} up to replacing Proposition~\ref{prop:AuschQafI} by
    Proposition~\ref{prop:AuschQafII}.
    We only show $(X_0^{p'})^{\perp_\eps} \subset Y_0^p$. For $f\in (X_0^{p'})^{\perp_\eps}$ we have $f\in
    (X^{p'})^{\perp_\eps}$  thanks to $X_0^{p'}\supset X^{p'}$. So 
    Proposition~\ref{prop:HelmholtzDecompositionN} gives $f= \nabla F$ for some $F\in
    W^{1,p}(\Omega)$. We want to show that we even have $F\in W_0^{1,p}(\Omega)$ after substracting a
    suitable constant. To see this let $\phi\in C^1(\partial\Omega)$ be arbitrary 
    with zero average and let $u\in W^{1,p'}(\Omega)$ denote the unique weak solution of
    $$
      -\Delta  u =0 \quad\text{in }\Omega,\qquad
       \nabla u \cdot\nu = \phi \quad\text{on } \partial\Omega.
    $$
    This is possible by \cite[Theorem~9.2]{FabMedMit_BoundaryLayers}.
    Then $u$ is harmonic with $\eps(x)^{-1}\nabla u\in X_0^{p'}$. Hence, 
    $$
      0 
      = \int_\Omega \eps(x)\nabla F\cdot \eps(x)^{-1} \nabla u\,dx
      = \int_\Omega  \nabla F\cdot \nabla u\,dx
      = \int_{\partial\Omega} F\phi\,d\sigma.  
    $$
    As a consequence,
    $$
      0
      = \int_{\partial\Omega} F\phi \,d\sigma
      \qquad\text{whenever}\quad \int_{\partial\Omega} \phi\,d\sigma = 0.
    $$
    So $F$ is constant on $\partial\Omega$ and after substracting the constant, we find
    $f= \nabla F$ for some $F\in W_0^{1,p}(\Omega)$,
    so $f\in Y_0^p$. Here we used that $F\in W_0^{1,p}(\Omega)$ holds if and only if $F\in W^{1,p}(\Omega)$
    has zero trace \cite[p.315]{Brezis}. This finishes the proof. 
  \end{proof}
   
   As in the Neumann setting $\mathcal V_0$ inherits
   the embeddings from $H_0^1(\Omega;\R^3)$, which allows to set up Fredholm theory
   for the linear Dirichlet problem~\eqref{eq:LinBVPD} using
\begin{equation*}
  \mathcal V_0 \hookrightarrow X_0^p 
  \quad\text{boundedly for }1\leq p\leq 6
  \text{ and compactly for } 1\leq p<6.
\end{equation*}
   In this way one finds that Theorem~\ref{thm:linear_theoryN} admits a counterpart for the Dirichlet problem
   with $\mathcal V,\mathcal W,X^{p'},Y^{q}$ replaced by $\mathcal V_0,\mathcal W_0,X_0^{p'},Y_0^{q}$,
   respectively. Replacing the function spaces in the discussion  of the nonlinear problems the leads to
   existence results for infinitely many solutions of the nonlinear Dirichlet problem under the same
   assumptions (A1),(A2),(A3).  We close this section by a remark on the interpretation of the boundary
   condition $E\times\nu=0$ on $\partial\Omega$.
   
   \medskip

  \begin{rem}\label{rem:BC}
      In the context of \eqref{eq:NLCurlCurlD}  the
      metallic boundary condition $E\times\nu=0$ on $\partial\Omega$ holds in the sense  
     $$ 
       \int_\Omega \Big( (\nabla\times\Phi)\cdot E - \Phi\cdot (\nabla\times E) \Big)\,dx = 0
       \qquad\text{for all } \Phi\in C^1(\ov\Omega;\R^3).
	$$
	In fact, this identity holds for all $E\in C_0^\infty(\Omega;\R^3)$ and hence, by density with
	respect to $\|\cdot\|$, for all $E\in \mathcal H_0$. It encodes the boundary condition given that, under
	suitable regularity assumptions, the integral equals, by the Divergence Theorem, 
	$$
	  \int_\Omega \nabla\cdot(\Phi\times E)\,dx 
	  = \int_{\partial\Omega} (\Phi\times E)\cdot \nu\,d\sigma
	  = \int_{\partial\Omega} (E\times \nu)\cdot \Phi\,d\sigma
	$$ 
	for the outer unit normal field $\nu:\partial\Omega\to\R^3$. This is analogous to the classical Dirichlet
	problem for the Laplacian where the zero trace boundary condition comes with the space $H_0^1(\Omega)$. 
	This motivates the name ``Dirichlet problem'' for \eqref{eq:NLCurlCurlD}.
	In the context of~\eqref{eq:NLCurlCurlN} the boundary condition $(\mu(x)^{-1}\nabla\times E)\times\nu = 0$
	on $\partial\Omega$ is encoded in the Euler-Lagrange equation for the functional $I$ over $\mathcal H$. So it
	shows up as a free boundary condition, which is analogous to the Neumann problem for the Laplacian.
  \end{rem}

\section{Some technical results}

\begin{lem}\label{lem:curlfree}
  Assume (A1),(A2) and $1\leq p<\infty$. If $f\in L^p(\Omega;\R^3)$ satisfies $\nabla\times f=0$ in
  the distributional sense, then $f=\nabla u$ for some $u\in W^{1,p}(\Omega)$.
\end{lem}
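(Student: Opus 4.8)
The plan is to produce a distributional potential for $f$ and then bootstrap its integrability, the latter being the cheap half. Observe first that if $u\in\mathcal D'(\Omega)$ satisfies $\nabla u=f$ with $f\in L^p(\Omega;\R^3)$ and $\Omega$ a bounded $C^1$-domain, then $u$ is represented by a $W^{1,1}_{\loc}(\Omega)$-function (a distribution with locally integrable gradient is a Sobolev function), and the Poincar\'e inequality on the bounded Lipschitz domain $\Omega$ gives $\|u-c\|_{L^p(\Omega)}\lesssim\|\nabla u\|_{L^p(\Omega)}=\|f\|_{L^p(\Omega)}<\infty$ for a suitable constant $c$. Hence $u\in W^{1,p}(\Omega)$ and everything reduces to constructing \emph{some} $u$ with $\nabla u=f$ in $\mathcal D'(\Omega)$.

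To construct $u$ I would regularise along an exhaustion. Fix an increasing sequence of connected, simply connected open sets $\Omega_1\Subset\Omega_2\Subset\cdots$ with $\bigcup_k\Omega_k=\Omega$; such an exhaustion exists because $\Omega$ is a bounded $C^1$-domain (and, as is implicitly assumed here, simply connected --- see the last paragraph). Let $(\rho_\varepsilon)_\varepsilon$ be a standard mollifier. On a fixed $\Omega_k$ and for $\varepsilon<\dist(\Omega_k,\partial\Omega)$ the fields $f_\varepsilon:=\rho_\varepsilon*f$ are smooth on $\ov{\Omega_k}$ and satisfy $\nabla\times f_\varepsilon=\rho_\varepsilon*(\nabla\times f)=0$ there; since $\Omega_k$ is simply connected, the classical Poincar\'e lemma yields $u_\varepsilon\in C^\infty(\Omega_k)$ with $\nabla u_\varepsilon=f_\varepsilon$ and $\int_{\Omega_k}u_\varepsilon\,dx=0$. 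Then $\|\nabla u_\varepsilon\|_{L^p(\Omega_k)}=\|f_\varepsilon\|_{L^p(\Omega_k)}\le\|f\|_{L^p(\Omega)}$, so by the Poincar\'e inequality $(u_\varepsilon)$ is bounded in $W^{1,p}(\Omega_k)$; passing to a subsequential limit $u_\varepsilon\wto u^{(k)}$ and using $f_\varepsilon\to f$ in $L^p(\Omega_k)$ gives $\nabla u^{(k)}=f$ on $\Omega_k$ with $u^{(k)}\in W^{1,p}(\Omega_k)$.

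It then remains to patch the $u^{(k)}$ together. On the connected set $\Omega_k$ the difference $u^{(k+1)}-u^{(k)}$ has vanishing gradient, hence equals a constant; subtracting it from $u^{(k+1)}$ and iterating, we may assume $u^{(k+1)}|_{\Omega_k}=u^{(k)}$ for all $k$, so that the $u^{(k)}$ define a single $u\in W^{1,p}_{\loc}(\Omega)$ with $\nabla u=f$ on $\Omega$. By the first paragraph $u\in W^{1,p}(\Omega)$, which proves the lemma.

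The delicate point --- and the main obstacle --- is the \emph{global} passage from ``locally a gradient'' to ``a gradient'': the patching step relies on each $\Omega_k$, and ultimately on $\Omega$ itself, being simply connected, equivalently on the vanishing of the first de Rham cohomology of $\Omega$. On a domain such as a solid torus a curl-free $L^p$-field need not be a gradient, so a topological hypothesis of this kind is genuinely needed; it is, however, harmless in all applications of this lemma in the paper, and one may alternatively invoke the $L^p$-theory of Helmholtz/Hodge decompositions on simply connected Lipschitz domains (e.g.\ the treatment of Girault and Raviart). A lesser, purely technical nuisance is that mollification only yields smooth fields on compactly contained subdomains, which is precisely why the construction is organised along an exhaustion of $\Omega$ rather than carried out on $\Omega$ directly.
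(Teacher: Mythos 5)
Your proof follows essentially the same skeleton as the paper's: mollify $f$, solve the local Poincar\'e lemma on simply connected pieces, use the Poincar\'e inequality to control the normalised local potentials in $W^{1,p}$, and then patch. The decisive difference is that you patch along a nested exhaustion (adjusting constants once per step on connected overlaps), whereas the paper glues with a partition of unity, defining $u:=\sum_i \chi_i u_{B_i}$ and declaring it ``straightforward'' that $\nabla u = f$. That claim is in fact not correct as written: one computes
\[
\nabla u \;=\; f + \sum_i u_{B_i}\,\nabla\chi_i,
\]
and the second sum does not vanish, because the local potentials $u_{B_i}$ (each normalised to have zero mean on $B_i$) do not agree on the overlaps $B_i\cap B_j$; it would vanish if one could replace each $u_{B_i}$ by $u_{B_i}+c_i$ with constants chosen consistently so that the corrected potentials agree pairwise, and that consistency is precisely the vanishing of a \v{C}ech $1$-cocycle, i.e.\ $H^1_{\mathrm{dR}}(\Omega)=0$. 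So the topological hypothesis you flag as ``implicitly assumed'' is not a cosmetic remark: without it the lemma is false (a harmonic vector field on a solid torus is a curl-free $L^p$ field that is not a gradient), and both your patching step and the paper's partition-of-unity patching genuinely need it. In other words, you have correctly diagnosed a gap that is also present --- and unacknowledged --- in the paper's own argument; the hypothesis is missing from (A1). Two minor caveats about your version: (i) the existence of an exhaustion of a simply connected bounded $C^1$ domain by simply connected, connected, compactly contained subdomains is plausible for such regular domains (e.g.\ via $\Omega_k=\{x:\dist(x,\partial\Omega)>1/k\}$ for $k$ large) but deserves at least a sentence of justification, since in $\R^3$ this is not automatic for arbitrary open sets; (ii) in the first paragraph, the leap from $u\in W^{1,1}_{\loc}$ with $\nabla u\in L^p(\Omega)$ to $u\in W^{1,p}(\Omega)$ via Poincar\'e is fine for bounded $C^1$ (hence Lipschitz) domains, but it is worth saying that this is the step where the global regularity of $\partial\Omega$ enters. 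With these points addressed, your argument is a correct and in fact more careful version of the paper's.
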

\begin{proof}
  By assumption we have  
  $$
    \int_\Omega f\cdot (\nabla\times\Phi)\,dx = 0\qquad\text{for all }\Phi\in C_0^\infty(\Omega;\R^3).
  $$
  Take mollifiers $(\eta_\tau)_{\tau>0}$ that form a smooth approximation of the
  identity with $\supp(\eta_\tau)\subset B_\tau(0)$ and define 
  $$
    f_\tau(x):=(\eta_\tau\ast f)(x):=\int_\Omega \eta_\tau(x-y)f(y)\,dy.
  $$  
  For any given $\Phi\in C_0^\infty(\Omega;\R^3)$ we have  $\Phi_\tau\in C_0^\infty(\Omega)$  for all positive
  $\tau<\dist(\supp(\Phi),\partial\Omega)$. This implies, using integration by parts, 
   $(\nabla\times\Phi)_\tau=\nabla\times\Phi_\tau$ as well as
  $$
      \int_\Omega (\nabla\times f_\tau)\cdot  \Phi \,dx
      = \int_\Omega f_\tau\cdot (\nabla\times\Phi)\,dx
      = \int_\Omega f\cdot (\nabla\times\Phi)_\tau\,dx
      =  \int_\Omega f\cdot (\nabla\times\Phi_\tau)\,dx
      \stackrel{\eqref{eq:orthogonalityN}}= 0.
    $$
    Given that the test function $\Phi$ is arbitrary as long as $\tau<\dist(\supp(\Phi),\partial\Omega)$,
    we conclude that for any given strictly contained ball $B\subset\subset \Omega$ we have $\nabla \times
    f_\tau =0$ in $B$ provided that $0<\tau<\dist(B,\partial\Omega)$. Since $f_\tau$ is smooth and any such
    ball is simply connected, we have 
    $$
      f_\tau|_B = \nabla u_{B,\tau} \quad\text{for a unique } u_{B,\tau}\in C^\infty(B)
      \text{ with }\int_{B} u_{B,\tau}\,dx = 0 \qquad\text{where }0<\tau<\dist(B,\partial\Omega).
    $$ 
    By Poincar\'{e}'s Inequality there is a constant $C=C(B)$ depending on $B$ such that 
    $$
      \|u_{B,\tau} - u_{B,\delta}\|_{W^{1,p}(B)} 
      \leq C\|\nabla u_{B,\tau} - \nabla u_{B,\delta}\|_{L^p(B)}
      = \|f_\tau-f_\delta\|_p
      \to 0\quad\text{as }\tau,\delta\to 0^+.
    $$
    We conclude $u_{B,\tau}\to u_B$ for some $u\in W^{1,p}(B)$ and it is standard to verify 
    $$
      f =\nabla u_B\quad\text{on }B.
    $$ 
    If $(\chi_i)_{i\in\N}$ is a partition of unity subordinate to some
    open cover $\Omega=\bigcup_{i\in\N} B_i$ with balls $B_i\subset \Omega$, it is straightforward to check
    $$
      f=\nabla u \quad\text{on } \Omega \qquad \text{where } 
      u:= \sum_{i\in\N} \chi_i F_{B_i}.
    $$
    Since $f\in L^p(\Omega)$, we have $u\in W^{1,p}(\Omega)$,  which is all we had to show.  
\end{proof}

 \begin{prop}\label{prop:Apsi}
    Assume that $f:\Omega\times\R^3\to\R^3$ satisfies~(A3). Then
    $\psi(x,\cdot):=f(x,\cdot)^{-1}$ exists for almost all $x\in\Omega$ and satisfies (A3').
  \end{prop}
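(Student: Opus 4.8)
The plan is to reduce everything to the scalar radial profiles. Since $f(x,E)=f_0(x,|E|)|E|^{-1}E$ preserves rays through the origin and rescales lengths by $s\mapsto f_0(x,s)$, the map $f(x,\cdot)$ is a bijection of $\R^3$ if and only if $f_0(x,\cdot)$ is a bijection of $(0,\infty)$, in which case its inverse is necessarily of the form $\psi(x,P)=\psi_0(x,|P|)|P|^{-1}P$ with $\psi(x,0)=0$, where $\psi_0(x,\cdot):=f_0(x,\cdot)^{-1}$. So first I would record that $f_0(x,\cdot)$ is a strictly increasing continuous bijection of $(0,\infty)$ onto itself: continuity and differentiability are part of (A3); since $s\mapsto s^{-1}f_0(x,s)$ is increasing and differentiable, its derivative $s^{-2}\big(sf_0'(x,s)-f_0(x,s)\big)$ is nonnegative, whence $f_0'(x,s)\ge s^{-1}f_0(x,s)>0$ for all $s>0$, so $f_0(x,\cdot)$ is in fact strictly increasing; and the inequalities in (A3) give the two-sided bound $2c_1s^{p-1}\le f_0(x,s)\le (c_1/c_2)s^{p-1}$ (the upper bound from $c_1s^p\ge c_2f_0(x,s)s$, the lower one from $\tfrac12 f_0(x,s)s-\int_0^s f_0(x,t)\,dt\ge c_1 s^p$ together with $\int_0^s f_0\ge 0$), which forces $f_0(x,s)\to 0$ as $s\to 0^+$ and $f_0(x,s)\to\infty$ as $s\to\infty$. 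Measurability of $x\mapsto\psi_0(x,z)$ follows from $\{x:\psi_0(x,z)>t\}=\{x:f_0(x,t)<z\}$ together with continuity in $z$, so $\psi$ is a Carathéodory function and in particular measurable.

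Next I would verify the qualitative part of (A3'). Positivity and monotonicity of $\psi_0(x,\cdot)$ are immediate, being the inverse of a positive strictly increasing bijection of $(0,\infty)$; differentiability, with $\psi_0'(x,z)=1/f_0'(x,\psi_0(x,z))>0$, follows from the differentiable-inverse theorem, which applies precisely because $f_0'>0$. For the requirement that $z\mapsto z^{-1}\psi_0(x,z)$ be decreasing I would substitute $z=f_0(x,a)$, $a=\psi_0(x,z)$, so that $z^{-1}\psi_0(x,z)=a/f_0(x,a)=\big(f_0(x,a)/a\big)^{-1}$; since $a\mapsto a^{-1}f_0(x,a)$ is increasing and $z\mapsto a=\psi_0(x,z)$ is increasing, the composition is increasing and its reciprocal is therefore decreasing. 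The upper bound $\psi_0(x,z)\le (2c_1)^{-1/(p-1)}z^{1/(p-1)}$ obtained above also shows $|\psi(x,P)|=\psi_0(x,|P|)\to 0$ as $P\to 0$, so $\psi(x,\cdot)$ is continuous at the origin and the stated formula for $\psi$ is consistent.

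The remaining point is the energy estimate \eqref{eq:psi_estimate}, and the tool for it is the Young/Legendre identity
\[
  \int_0^z\psi_0(x,s)\,ds \;=\; z\,\psi_0(x,z)-\int_0^{\psi_0(x,z)}f_0(x,t)\,dt\qquad(z>0),
\]
which I would prove by noting that the right-hand side, as a function of $z$, vanishes at $0$ and has derivative $\psi_0(x,z)$ (using $f_0(x,\psi_0(x,z))=z$ and differentiability of $\psi_0$). Writing $a:=\psi_0(x,z)$, so $z=f_0(x,a)$, this identity turns the left-hand side of \eqref{eq:psi_estimate} into $\tfrac12 f_0(x,a)a-\int_0^a f_0(x,t)\,dt$, which by (A3) is $\ge c_1 a^p$, while also $f_0(x,a)\le\tfrac{c_1}{c_2}a^{p-1}$ and $f_0(x,a)\ge 2c_1a^{p-1}$. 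Translating these through $z=f_0(x,a)$ yields $(c_2/c_1)^{1/(p-1)}z^{1/(p-1)}\le\psi_0(x,z)\le(2c_1)^{-1/(p-1)}z^{1/(p-1)}$, and since $1/(p-1)=p'-1$, hence $z\cdot z^{1/(p-1)}=z^{p'}$ and $\psi_0(x,z)^p$ is comparable to $z^{p'}$, I obtain $\int_0^z\psi_0(x,s)\,ds-\tfrac12\psi_0(x,z)z\ge c_1 a^p\ge c_1(c_2/c_1)^{p'}z^{p'}$ as well as $\psi_0(x,z)z\le(2c_1)^{-1/(p-1)}z^{p'}$; these together give \eqref{eq:psi_estimate} with $\tilde c_1:=c_1(c_2/c_1)^{p'}$, $\tilde c_2:=\tilde c_1(2c_1)^{1/(p-1)}$ and the same exponent $p'$.

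All of these computations are elementary; the only places that need some care are the deduction $f_0'(x,s)>0$ — which is what makes $\psi_0(x,\cdot)$ differentiable on all of $(0,\infty)$, as (A3') demands, rather than merely a.e.\ — and keeping the exponent bookkeeping ($1/(p-1)=p'-1$, $z\cdot z^{1/(p-1)}=z^{p'}$) straight in the dual estimate. I expect the latter, together with verifying the Young identity cleanly, to be the main (mild) obstacle.
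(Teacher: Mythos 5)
Your proof is correct and follows essentially the same route as the paper: reduce to the scalar radial profile $f_0(x,\cdot)$ and its inverse $\psi_0(x,\cdot)$, transfer the monotonicity of $s^{-1}f_0(x,s)$ to $z^{-1}\psi_0(x,z)$ by the substitution $z=f_0(x,a)$, and derive \eqref{eq:psi_estimate} from the Young/Legendre identity together with the power bounds $f_0(x,s)\sim s^{p-1}$ extracted from (A3). Your write-up is slightly more explicit than the paper at two points the paper leaves implicit --- that $f_0'(x,s)\ge s^{-1}f_0(x,s)>0$ (so the inverse is differentiable everywhere, not just a.e.) and the joint measurability of $\psi$ --- but these are refinements, not a different argument.
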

  \begin{proof}
    By assumption (A3), for almost all $x\in\Omega$ the function $z\mapsto f_0(x,z)$ is
    positive, differentiable and increasing on $(0,\infty)$ with $f_0(x,z)\to 0$ as $z\to 0$ and
    $f_0(x,z)\to +\infty$ as $z\to\infty$. In particular, $f_0(x,\cdot):[0,\infty)\to [0,\infty)$ admits a positive,
    differentiable and increasing inverse $\psi_0(x,\cdot):=f_0(x,\cdot)^{-1}$ for such $x\in\Omega$.
    Moreover, $z\mapsto z^{-1} \psi_0(x,z)$ is decreasing on $(0,\infty)$ because $s\mapsto s^{-1}
    f_0(x,s)$ is increasing on $(0,\infty)$. This implies $f(x,\cdot)^{-1}=\psi(x,\cdot)$ where
    $\psi(x,P):=\psi_0(x,|P|)|P|^{-1}P$ and the claimed properties of $\psi_0$ except \eqref{eq:psi_estimate}.

	\medskip

	To prove \eqref{eq:psi_estimate} note that (A3) implies $z:= f_0(x,s)  \sim  s^{p-1}$ and thus $\psi_0(x,z) =
	s \sim  z^{p'-1}$. This implies the second inequality in \eqref{eq:psi_estimate}.
	Furthermore, by differentiation we find the identity
	$$
	  \int_0^{f_0(x,s)} \psi_0(x,t)\,dt + \int_0^s f_0(x,t)\,dt = sf_0(x,s)
	  \quad\text{for all }s\geq 0
	$$
	and conclude with the aid of assumption~(A3)
	$$
	  \int_0^z \psi_0(x,t)\,dt-\frac{1}{2} \psi_0(x,z)z=\frac{1}{2} f_0(x,s)s - \int_0^s f_0(x,t)\,dt
	  \gtrsim  s^p \sim z^{p'}.
	$$
	This provides the first inequality in~\eqref{eq:psi_estimate} and the claim is proved.
  \end{proof}

\bibliographystyle{abbrv}
\bibliography{biblio}

\end{document}